\author{Enea Milio}
\title{A quasi-linear time algorithm for computing modular polynomials in dimension $2$}
\date{}
\newcommand{\Z}{\mathbb{Z}}
\newcommand{\ssi}{if and only if }
\newcommand{\Q}{\mathbb{Q}}
\newcommand{\C}{\mathbb{C}}
\newcommand{\F}{\mathbb{F}}
\newcommand{\HH}{\mathcal{H}}
\newcommand{\FF}{\mathcal{F}_2}
\newcommand{\bb}{\backslash}
\newcommand{\Spg}{\textrm{Sp}_{2g}(\Z)}
\newcommand{\pMat}{\begin{ppsmallmatrix}A&B\\C&D\end{ppsmallmatrix}}
\newcommand{\pMatp}{\begin{ppsmallmatrix}A&pB\\C/p&D\end{ppsmallmatrix}}
\newenvironment{psmallmatrix}{\left [\begin{smallmatrix}}{\end{smallmatrix}\right ]}
\newenvironment{ppsmallmatrix}{\left (\begin{smallmatrix}}{\end{smallmatrix}\right )}
\newcommand{\thetacar}[2]{\theta\begin{psmallmatrix}#1\\ #2\end{psmallmatrix}}
\newcommand*{\newaliascnt}[2]{%
  \begingroup
    \def\AC@glet##1{%
      \global\expandafter\let\csname##1#1\expandafter\endcsname
        \csname##1#2\endcsname
    }%
    \@ifundefined{c@#2}{%
      \@nocounterr{#2}%
    }{%
        \AC@glet{c@}%
        \AC@glet{the}%
        \AC@glet{theH}%
        \AC@glet{p@}%
        \expandafter\gdef\csname AC@cnt@#1\endcsname{#2}%
        \expandafter\gdef\csname cl@#1\expandafter\endcsname
        \expandafter{\csname cl@#2\endcsname}%
    }%
  \endgroup
}
\newtheorem{theo}{Theorem}
\newaliascnt{algocf}{theo}
\newtheorem{deff}[theo]{Definition}
\newtheorem{prop}[theo]{Proposition}
\newtheorem{conj}[theo]{Conjecture}
\newtheorem{lemm}[theo]{Lemma}
\theoremstyle{definition}
\newtheorem{remm}[theo]{Remark}
\newtheorem{exem}[theo]{Example}
\newtheorem{supp}[theo]{Assumption}
\newcommand{\lquo}[2]{\leavevmode\kern-.1em\lower.25ex\hbox{$#2$}\kern-.1em\backslash\kern-.1em\raise.2ex\hbox{$#1$}}
\newcommand{\rquo}[2]{\leavevmode\kern-.1em\raise.2ex\hbox{$#1$}\kern-.1em/\kern-.1em\lower.25ex\hbox{$#2$}}
\begin{document}

\maketitle
\begin{abstract}We propose to generalize the work of Régis Dupont for computing modular polynomials in dimension $2$ to new invariants. 
We describe an algorithm to compute modular polynomials for invariants derived from theta constants and prove under some heuristics that this algorithm is quasi-linear
in its output size. Some properties of the modular polynomials defined from  quotients of theta constants are analyzed.
We report on experiments with our implementation.
\end{abstract}

An isogeny is a morphism between two abelian varieties that is surjective and has a finite kernel. It is an important notion for the theorical study of abelian varieties, but also 
for cryptographic applications because it allows one to transfer the discrete logarithm problem from a variety, where the problem is difficult, to an isogenous variety, where it may be 
easier. 

The computation of an isogeny could mean several things: given a maximal isotropic subgroup of the $\ell$-torsion, to be able to compute  the isogenous variety; to calculate the image of a point 
by an isogeny; to check if two abelian varieties are isogenous and if so compute an isogeny. But what interests us here is the computation of all the isogenous varieties (for a fixed degree) of 
a given variety and this can be done with modular polynomials. 


These polynomials also have other applications.
In dimension $1$, they are the key (SEA) for speeding up  the algorithm of Schoof for counting the number of points 
on an elliptic curve (see \cite{Elkies,CountPoint}), 
 for constructing elliptic curves with a known number of points by complex multiplication (see \cite{CompHilbClassPol,ClassInvCRT,HilbCRT}) and for the computation of the 
endomorphism ring of elliptic curves (see \cite{EndEllFF}). 
They can be computed in quasi-linear time (see \cite{ModPolVol,Enge}).

In dimension $2$, these polynomials play the same role, but are harder to compute. 
They could also speed up the CRT-algorithm (see \cite{CRTG2}) to compute class fields of degree 4 CM-fields, which would lead to faster algorithms to construct cryptographically secure Jacobians of hyperelliptic curves.

An algorithm to compute the modular polynomials in dimension $2$ has been introduced by Régis Dupont (see \cite{Dupont}) in $2006$.
Using it, he managed to compute the polynomials parameterizing $(2,2)$-isogenies, but these polynomials are so big that he could not 
compute them for $(3,3)$-isogenies. 
This is due to the fact that he used Igusa invariants (see Definition~\ref{defIg}). 

In this article, we will present a generalization of his algorithm allowing one to compute modular polynomials with invariants $f_1$, $f_2$, $f_3$ derived from theta constants, for a congruence subgroup $\Gamma$
of the symplectic group $\Gamma_2$.
We present results with Streng invariants (see Definition~\ref{defStr}) and quotients of theta constants.
The algorithm proceeds by evaluation/interpolation: without knowing the polynomials we are still able to evaluate
them on any values and if we do so on sufficiently many values, we can interpolate to recover the polynomials. The most important difference with the 
dimension 1 case is that we have to interpolate trivariate rational functions rather than univariate polynomials.
This adds difficulties in the evaluation step; we need to choose the arguments in which to evaluate in a specific way.
For this, we will see that we need to be able to find $\Omega$ modulo $\Gamma$
from $f_1(\Omega)$, $f_2(\Omega)$ and $f_3(\Omega)$.

To do that, we deduce from $f_1(\Omega)$, $f_2(\Omega)$ and $f_3(\Omega)$ the Igusa invariants $j_1(\Omega)$, $j_2(\Omega)$, $j_3(\Omega)$ and then we 
apply Mestre's algorithm to obtain a hyperelliptic curve with these Igusa invariants. Using Thomae's formula, numerical integration and the Borchardt mean,
it is possible to obtain $\Omega$ modulo $\Gamma_2$, under Conjecture~\ref{conj}.
Once we have $\Omega$ modulo $\Gamma_2$, we have to find $\Omega$ modulo $\Gamma$. This can be done thanks to the functional equation of the theta constants of Proposition~\ref{funceq}.
It remains to compute the products of Definition~\ref{defPolMod} which ends the evaluation step.

All the computation are done with multiprecision floating point numbers. Explicit bounds on the size of the coefficients of
the modular polynomials are unknown in dimension $2$ (this is already a hard problem in dimension $1$).
Thus our algorithm is heuristic. Under the heuristics and Conjecture~\ref{conj}, we have shown  that our algorithm is quasi-linear (Theorem~\ref{ThPrincipal}). 
In practice, we may double the precision until we manage to find a sufficient precision to compute the polynomials. 
We underline the fact that the computations have been done at high precision so that we required a fast algorithm to compute the theta constants. This algorithm uses
the Borchardt mean.

We first applied the algorithm of Dupont with Streng invariants instead of Igusa invariants to compute modular polynomials. The invariants of Streng are equivalent
to the Igusa ones in the sense that they describe the same moduli space up to birational equivalence (and indeed, there are formula to switch from ones to the others: see (\ref{IgToIg})). We managed to
compute the modular polynomials parameterizing $(2,2)$- and  $(3,3)$-isogenies. The reason why this was possible is that the use of Streng invariants produce much smaller
polynomials in terms of degrees and sizes of the coefficients, and thus the precision of the computation is smaller (as similarly noticed by Streng in \cite{Streng} for the computation
of class polynomials). For example, the modular polynomials for $p=2$ with Streng invariants fill $2.1$ MB compared to $57$ MB with Igusa invariants.

We have then applied our generalized algorithm to $b'_i(\Omega)=\frac{\theta_i(\Omega/2)}{\theta_0(\Omega/2)}$ for $i=1,2,3$,  which are modular functions for the group $\Gamma(2,4)$,
and computed the modular polynomials with these invariants for $p=3$, $p=5$ and $p=7$. As these polynomials fill respectively $175$ KB, $200$ MB and $29$ GB, 
we have no hope of computing the modular polynomials for larger $p$. The polynomials found are much smaller than those with Igusa or Streng invariants. For comparison, for $p=3$ they fill $175$ KB while those with Streng invariants
fill $890$ MB. We explain this by the fact that these polynomials have symmetries (Theorem~\ref{ThSym}) and are sparse (Theorem~\ref{ThDeg}).

The remainder of this article is organised as follows.
In the first section, we recall the theory of abelian varieties, of theta constants and of modular polynomials. In
the second section, we explain how to interpolate multivariate rational functions and we do a complexity analysis. This section is independent of the others.
The algorithm to compute the modular polynomials is given in the third section and some applications of it are 
described in the fourth section. The fifth section is dedicated to a deeper examination of these polynomials (in particular
the symmetries appearing and an interpretation of the denominators of the coefficients of these polynomials). In Section
$6$ we discuss our implementation of the algorithm and finally in Section $7$ we give some examples of 
hyperelliptic curves over finite field with isogenous Jacobians that we managed to compute using our modular polynomials.


\section{Theory of modular polynomials}




The Siegel upper half-space $\HH_g$ for dimension $g$ is the set of $g\times g$ symmetric matrices over the complex numbers with positive definite  imaginary part.
It is a moduli space for principally polarized abelian varieties (see \cite[Proposition 8.1.2]{Birk}).
Indeed, a principally polarized abelian variety is a torus $\C^g/(\Omega\Z^{2g}+\Z^{2g})$ for $\Omega\in\HH_g$ (which is  called a \emph{period matrix}).

Let $I_g$ denote the identity matrix of size $g$ and $J=\begin{ppsmallmatrix}0&I_g\\-I_g&0\end{ppsmallmatrix}$. We define  the symplectic group of dimension $2g$ as
$\textrm{Sp}_{2g}(\Z)=\{\gamma\in\textrm{Gl}_{2g}(\Z):\,^t\!\gamma J\gamma=J\}.$
It can easily be shown that 
if $\gamma=\pMat$, then $\gamma\in\Spg$ \ssi the following three  equalities are verified
\begin{equation}\label{eqSpg}
\begin{split}
 ^t\!AC&=\,^t\!CA\\\,^t\!BD&=\,^t\!DB\\\,^t\!DA-\,^t\!BC&=I_{g}
\end{split}
\qquad\textrm{which are equivalent to}\qquad
\begin{split}
 A\,^t\!B&=B\,^t\!A\\D\,^t\!C&=C\,^t\!D\\A\,^t\!D-B\,^t\!C&=I_{g}
\end{split}
\end{equation}
because $\Spg$ is closed under transposition of matrices.
Moreover, $\Spg$  acts (from the left) on $\HH_g$ by 
$\pMat\Omega=(A\Omega+B)(C\Omega+D)^{-1}$, the matrix $C\Omega+D$ being invertible for $\pMat\in\Spg$ and $\Omega\in\HH_g$.


The quotient space $\Spg\bb\HH_g$ is a moduli space for isomorphism classes of principally polarized abelian varieties of dimension $g$ (see \cite[Theorem 8.2.6]{Birk}).

We define $\Gamma_g=\Spg$. Note that $-I_{2g}$ acts trivially on $\HH_g$, so that some authors prefer to consider the projective symplectic group.

\begin{prop}\label{genG2} The group $\Gamma_g$ is generated by $J$ and the $\frac{g(g+1)}2$ matrices 
\[ M_{i,j}=\begin{pmatrix}I_g&m_{i,j}\\0&I_g,\end{pmatrix}\]
where $m_{i,j}$ is the matrix of size $g$  all entries of which are $0$ except for those at $(i,j)$ and $(j,i)$, which are equal to $1$.
\end{prop}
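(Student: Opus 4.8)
The plan is to reduce the statement to the classical fact that $\Gamma_g$ is generated by $J$ together with the \emph{translations} $T_S:=\begin{ppsmallmatrix}I_g&S\\0&I_g\end{ppsmallmatrix}$, where $S$ runs over the symmetric matrices in $M_g(\Z)$. This reduction is immediate: $T_ST_{S'}=T_{S+S'}$, so $S\mapsto T_S$ is a homomorphism, and the $m_{i,j}$ with $1\le i\le j\le g$ form a $\Z$-basis of the additive group of symmetric integer matrices, so the $M_{i,j}=T_{m_{i,j}}$ already generate every $T_S$. Writing $G\le\Gamma_g$ for the subgroup generated by $J$ and all the $T_S$, it remains to prove $G=\Gamma_g$.

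I would build up $G$ in stages. First, $G$ contains all lower translations, since a direct computation gives $JT_SJ^{-1}=\begin{ppsmallmatrix}I_g&0\\-S&I_g\end{ppsmallmatrix}$. Next, $G$ contains every $\begin{ppsmallmatrix}U&0\\0&{}^t\!U^{-1}\end{ppsmallmatrix}$ with $U\in\mathrm{GL}_g(\Z)$: since $\mathrm{GL}_g(\Z)$ is generated by the transvections $I_g+E_{k\ell}$ (with $k\ne\ell$, where $E_{k\ell}$ is the matrix unit) together with one reflection such as $\mathrm{diag}(-1,1,\dots,1)$, and each of these generators sits inside a copy of $\mathrm{GL}_2(\Z)$ acting on a coordinate pair $\{k,\ell\}$, it suffices to produce the block-diagonal symplectic matrices attached to $\mathrm{GL}_2(\Z)$-elements supported on $\{k,\ell\}$. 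Those come from the usual $\mathrm{SL}_2(\Z)$ identities (for instance $\begin{ppsmallmatrix}1&1\\0&1\end{ppsmallmatrix}\begin{ppsmallmatrix}1&0\\-1&1\end{ppsmallmatrix}\begin{ppsmallmatrix}1&1\\0&1\end{ppsmallmatrix}=\begin{ppsmallmatrix}0&1\\-1&0\end{ppsmallmatrix}$) applied inside the rank-$4$ symplectic subspace spanned by $e_k,e_\ell,e_{g+k},e_{g+\ell}$, on which $J$ and $T_{m_{k,k}},T_{m_{k,\ell}},T_{m_{\ell,\ell}}$ restrict to the standard generators of $\mathrm{Sp}_4(\Z)$; so this stage really only invokes the cases $g\le 2$, which I would check directly.

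Then comes the endgame. Given an arbitrary $M=\Mat\in\Gamma_g$, I would right-multiply it by elements of $G$ to simplify its bottom block row $(C\ D)$: the $\mathrm{GL}_g(\Z)$-matrices above effect arbitrary unimodular column operations within $C$ and within $D$ separately, while the upper and lower translations effect $C\mapsto C+DS$ and $D\mapsto D+CS$ for symmetric $S$. The symplectic relations~(\ref{eqSpg}) tell us that $C\,{}^t\!D$ is symmetric and that $(C\ D)$ is ``primitive'' (it is the bottom half of a matrix of $\Gamma_g$); a Euclidean-algorithm argument on the entries of $C$ then brings $(C\ D)$ to $(0\ I_g)$. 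Once $C=0$ and $D=I_g$, the relations~(\ref{eqSpg}) force the transformed matrix to equal $\begin{ppsmallmatrix}I_g&B'\\0&I_g\end{ppsmallmatrix}$ with $B'$ symmetric, i.e.\ an element $T_{B'}\in G$; hence $M\in G$, and $G=\Gamma_g$.

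The main obstacle is precisely this last step---the ``symplectic Euclidean algorithm'' that clears $C$ while keeping $D$ unimodular---which is where the primitivity of $(C\ D)$ is genuinely used. An alternative is to run the whole argument by induction on $g$: after making one row of $C$ vanish by unimodular operations and a translation, peel off a $2\times(2g-2)$ block and apply the statement for $\Gamma_{g-1}$ on the complementary symplectic subspace. All remaining verifications are routine manipulations with the relations~(\ref{eqSpg}).
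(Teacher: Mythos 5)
The paper's ``proof'' is merely a citation to Klingen, whereas you reconstruct the result from scratch by symplectic elementary operations; the overall strategy (reduce to $J$ and all $T_S$, obtain lower translations and $\mathrm{GL}_g(\Z)$-block-diagonals, then clear $(C\ D)$ by a Euclidean argument) is the standard one and its skeleton is sound.

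There is, however, one step that does not work as written. To manufacture $\begin{ppsmallmatrix}U&0\\0&{}^t\!U^{-1}\end{ppsmallmatrix}$ for a transvection $U=I_g+E_{k\ell}$, you propose to ``apply the $g\le 2$ case'' inside the symplectic subspace spanned by $e_k,e_\ell,e_{g+k},e_{g+\ell}$, using $J$ and $T_{m_{k,k}},T_{m_{k,\ell}},T_{m_{\ell,\ell}}$ as the $\textrm{Sp}_4$-generators. The three translations (and their $J$-conjugates) are indeed supported on those four coordinates, but the ambient $J$ is not: it acts as $J_4$ on the chosen subspace \emph{and} nontrivially on its complement. Consequently a word in $J$ and those translations that equals the desired element inside $\textrm{Sp}_4(\Z)$ need not give, in $\textrm{Sp}_{2g}(\Z)$, a matrix supported only on the four chosen indices; it will in general scramble the complementary coordinates. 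The fix is easy but must be said: first build the \emph{locally supported} symplectic swap. For a single index $k$, the $\mathrm{SL}_2$-identity you quoted gives $J_k:=T_{m_{k,k}}\,(J T_{-m_{k,k}}J^{-1})\,T_{m_{k,k}}\in G$, which acts as $\begin{ppsmallmatrix}0&1\\-1&0\end{ppsmallmatrix}$ on the pair $\{e_k,e_{g+k}\}$ and as the identity elsewhere; set $J_{k,\ell}:=J_kJ_\ell$. Now $J_{k,\ell}$ together with $T_{m_{k,k}},T_{m_{k,\ell}},T_{m_{\ell,\ell}}$ \emph{are} a genuine block-supported copy of the $\textrm{Sp}_4(\Z)$-generators, and your $g\le 2$ argument applies verbatim to produce the block-diagonal transvections (and $\operatorname{diag}(-1,1,\dots,1)$, which is just $J_1^2$ on a single pair). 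With that repair, the remaining endgame---the Euclidean reduction of $(C\ D)$ to $(0\ I_g)$, which you rightly flag as the crux---is correct and standard.
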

\begin{proof}This is a direct consequence of  \cite[Proposition 6, pages 41--42]{Klingen}.\end{proof}

Let $\mathcal{F}_g\subseteq \HH_g$ be such that $\Omega=(\Omega_{u,v})_{u,v\in[1,g]}$ is in $\mathcal{F}_g$ \ssi $\Omega$ verifies

\begin{enumerate}
\item $|\Re(\Omega_{u,v})|\le \frac 12$ for each $u,v\in [1,g]$;
\item the matrix $\Im(\Omega)$ is reduced in the sense of Minkowski (see \cite[Chapter I.2]{Klingen} for the definition);  
\item for each $\pMat\in\Gamma_g$, $|\det(C\Omega+D)|\ge 1$.
\end{enumerate}

\begin{remm} The third point has in principle  to be verified for each matrix of $\Gamma_g$. However for the dimensions $1$ and $2$
this condition has to be verified only for a well-known finite set (of cardinality $1$ in dimension $1$ and $19$ in dimension $2$, see \cite[Proposition 3, p.33]{Klingen} and \cite{Gott}).
\end{remm}

The set $\mathcal{F}_g$ is a \emph{fundamental domain} in the sense that for all $\Omega\in\HH_g$, there exists $\gamma\in\Gamma_g$ such that $\gamma\Omega\in\mathcal{F}_g$
and $\gamma$ is unique if $\gamma\Omega$ is an inner point of $\mathcal{F}_g$.

\begin{deff} Let $\Gamma$ be a subgroup of finite index of $\Gamma_g$ and $k\in\Z$. A \emph{Siegel modular form} of weight
$k$ for $\Gamma$ is a function $f:\HH_g\to\C$ such that
\begin{enumerate}
\item $f$ is holomorphic on $\HH_g$;
\item $\forall\gamma=\pMat\in\Gamma$ and $\Omega\in\HH_g$, $f(\gamma\Omega)=\det(C\Omega+D)^kf(\Omega)$;
\item in the case $g=1$, $f$ has to be holomorphic at the cusps (see for example \cite[Definition 2.3.1]{Schertz} for the definition).
\end{enumerate}
\end{deff}

\begin{deff} Let $\Gamma$ a subgroup of finite index of $\Gamma_g$. A function
$f:\HH_g\to\C$ is a \emph{Siegel modular function} for $\Gamma$ \ssi there are two Siegel modular forms
$f_1$ and $f_2$ for $\Gamma$ of the same weight and such that $f=\frac{f_1}{f_2}$. 
\end{deff}

From another point of view, a complex torus is an abelian variety \ssi it can be embedded into a projective space. This embedding can be done using theta functions.
We will only focus on the classical theta functions because they provide a projective coordinate system for the principally polarized abelian varieties and 
because these functions can easily be handled computationally.

\begin{deff}
Let $\Omega\in\HH_g$ and let $z$ be a vector of $\C^g$. The Riemann theta function is the function
\[\theta:\C^g\times\HH_g\to\C,\quad(z,\Omega)\mapsto\sum_{n\in\Z^g}\exp(i\pi\,^t\!n\Omega n+2i\pi\,^t\!nz)\]
and for $a,b\in \Q^g$, the (classical) theta function with characteristic $(a,b)$ is
\[\begin{array}{ll}
\theta\begin{psmallmatrix}a\\b\end{psmallmatrix}
(z,\Omega)&=\sum_{n\in\Z^g}\exp(i\pi\,^t\!(n+a)\Omega(n+a)+2i\pi\,^t\!(n+a)(z+b))\\
&=\exp(i\pi\,^t\!a\Omega a+2i\pi\,^t\!a(z+b))\theta(z+\Omega a+b,\Omega).
\end{array}
\] 
\end{deff}

These functions converge absolutely and uniformly on every compact subset of $\C^g\times \HH_g$ due to the fact that the imaginary part of $\Omega$
is positive definite. 

\begin{prop} Let $\Omega\in\HH_g$ and $n\in\mathbb{N}$. The functions $f:\C^g\to \C$ satisfying for each $z\in\C^g$ and each
$m',m''\in\Z^g$, \[f(z+\Omega m'+m'')=f(z)\exp(-i\pi n\,^t\!m'\Omega m'-2i\pi n\,^t\!zm')\]
are said to be functions of level $n$. They form a vector space  of dimension $n^g$ denoted by R$_n^{\Omega}$.
\end{prop}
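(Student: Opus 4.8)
The plan is to exhibit an explicit isomorphism between $\mathrm{R}_n^{\Omega}$ and the space of vectors in $\C^{n^g}$ indexed by $(\Z/n\Z)^g$, by showing that such a function $f$ is completely determined by a finite list of theta-like data and that every such list arises from exactly one $f$. First I would observe that the functional equation $f(z+\Omega m'+m'')=f(z)\exp(-i\pi n\,{}^t\!m'\Omega m'-2i\pi n\,{}^t\!zm')$ exhibited in the statement is precisely the quasi-periodicity satisfied by the level-$n$ theta functions $\theta\begin{psmallmatrix}a/n\\0\end{psmallmatrix}(nz,n\Omega)$ for $a\in\Z^g$; indeed, applying the transformation formula in the definition of the theta function with characteristic and using that $\Im(\Omega)$ is positive definite (hence the series converges), one checks directly that each of these functions lies in $\mathrm{R}_n^{\Omega}$, and that two of them coincide exactly when the characteristics $a$ differ by an element of $n\Z^g$. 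This produces $n^g$ candidate elements, and the bulk of the work is to prove they form a basis.

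Next I would prove linear independence and spanning simultaneously via Fourier analysis. Given any $f\in\mathrm{R}_n^{\Omega}$, the condition for $m''\in\Z^g$ forces $f$ to be periodic under $z\mapsto z+m''$, so $f$ admits a Fourier expansion $f(z)=\sum_{k\in\Z^g} c_k\exp(2i\pi\,{}^t\!kz)$. Substituting into the condition for $m'\in\Z^g$ and comparing Fourier coefficients yields a recursion of the shape $c_{k+nm'}=c_k\cdot(\text{explicit exponential factor in }\Omega, m', k)$, so that $f$ is determined by the finitely many coefficients $c_k$ with $k$ ranging over a set of representatives of $\Z^g/n\Z^g$, i.e. by $n^g$ free parameters. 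Matching this recursion against the Fourier coefficients of the candidate theta functions above shows that those candidates span, and since they realize all $n^g$ independent choices of residue-indexed data they are linearly independent. Holomorphy and the convergence needed to manipulate the Fourier series come for free from $\Im(\Omega)>0$, exactly as noted after the definition of $\theta$. Hence $\dim_{\C}\mathrm{R}_n^{\Omega}=n^g$.

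The step I expect to be the main obstacle is the bookkeeping in the Fourier-coefficient recursion: one must track the quadratic exponential factor $\exp(-i\pi n\,{}^t\!m'\Omega m')$ correctly through the substitution, verify that the resulting recursion is consistent (no contradictions forcing all $c_k=0$) and that it has exactly an $n^g$-dimensional solution space — equivalently, that the only constraint is periodicity of the index modulo $n\Z^g$ together with a fixed multiplier. A clean way to avoid sign and normalization errors is to substitute $f(z)=\exp(i\pi n\,{}^t\!z(\Im\Omega)^{-1}\Im(z)\text{-type correction})\cdot g(z)$ or, more simply, to directly compare $f$ with the known basis candidates and show their ratio is a bounded entire function of $z$, hence constant by Liouville applied coordinate-wise — this reduces the whole proposition to the quasi-periodicity verification for the explicit theta functions plus a counting of characteristics modulo $n$. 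I would present whichever of these two routes (Fourier recursion versus Liouville-on-the-ratio) makes the dimension count most transparent; the Liouville route is shorter but requires the candidates to be nonvanishing somewhere, which again follows from their theta-series description.
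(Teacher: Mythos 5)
Your primary route — Fourier expansion from $\Z^g$-periodicity, then the quasi-period in $\Omega m'$ forces the recursion $c_{k+nm'}=c_k\exp(2i\pi\,^t\!k\Omega m'+i\pi n\,^t\!m'\Omega m')$, leaving exactly $n^g$ free coefficients indexed by $\Z^g/n\Z^g$, with convergence from $\Im\Omega>0$ — is correct and is precisely the proof in Mumford's book, which is all the paper cites here. Two small cautions: you should state explicitly that $f$ is assumed holomorphic (the paper leaves it implicit, but without it the space is not finite-dimensional and the Fourier argument does not apply); and your alternative ``Liouville on the ratio'' route does not work as described, since the theta candidates have zeros, so $f/\theta$ is generally not entire, let alone bounded — the Fourier/recursion argument is the one to keep.
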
\begin{proof}See \cite[Section II.1]{Mumford83}.\end{proof}

\begin{theo}[Lefschetz] For $n\ge 3$, any linearly independent set of $k\ge n^g$ functions of level $n$  provides an embedding of $\C^g/(\Omega\Z^{2g}+\Z^{2g})$ into $\mathbb{P}^{k-1}(\C)$.
For $n=2$, the functions of level $2$ map only to $\big(\C^g/(\Omega\Z^{2g}+\Z^{2g})\big)/\sim$, where~$\sim$ is the equivalence relation such that $z\sim -z$.
\end{theo}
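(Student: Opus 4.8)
The plan is to reduce the statement to a concrete assertion about the single map
$\phi_n\colon X\to\mathbb{P}^{n^g-1}$, where $X=\C^g/(\Omega\Z^{2g}+\Z^{2g})$, defined by evaluating a basis of $\mathrm{R}_n^\Omega$. Since $\dim\mathrm{R}_n^\Omega=n^g$, a linearly independent set of level-$n$ functions has exactly $n^g$ elements and is a basis, and replacing one basis by another changes $\phi_n$ only by an automorphism of $\mathbb{P}^{n^g-1}$; so it suffices to treat one convenient basis. Because every $f\in\mathrm{R}_n^\Omega$ is entire and the quasi-periodicity factor $\exp(-i\pi n\,^t\!m'\Omega m'-2i\pi n\,^t\!zm')$ is common to all of $\mathrm{R}_n^\Omega$ and nowhere zero, $\phi_n$ is a well-defined holomorphic map on $X$ as soon as the level-$n$ functions have no common zero. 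The main tool throughout will be an explicit family of level-$n$ functions: for $w_1,\dots,w_n\in\C^g$ with $\sum_i w_i\in\Z^g$ one checks directly from the quasi-periodicity of $\theta(\cdot,\Omega)$ that $z\mapsto\prod_{i=1}^{n}\theta(z+w_i,\Omega)$ lies in $\mathrm{R}_n^\Omega$, and its zero locus on $X$ is the union of the $n$ translates $D-w_i$ of the theta divisor $D=\mathrm{div}\,\theta(\cdot,\Omega)$. The incidences of these translates carry essentially all the geometry we need.

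Base-point freeness holds for every $n\ge2$: given $z_0$, take $w_1=\cdots=w_{n-1}=t$ and $w_n=-(n-1)t$; since $D$ is a proper subvariety, a generic $t$ gives $z_0+t\notin D$ and $z_0-(n-1)t\notin D$, so the associated product does not vanish at $z_0$. This yields a morphism $\phi_n$ on $X$ for all $n\ge2$. For injectivity when $n\ge3$, suppose $\phi_n(z)=\phi_n(z')$ but $z\not\equiv z'$ modulo the lattice; I would produce $w_1,\dots,w_n$ with $\sum_i w_i\in\Z^g$, $z+w_1\in D$, and $z'+w_i\notin D$ for all $i$, so that the corresponding $f$ vanishes at $z$ but not at $z'$ — a contradiction. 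The point is that $D-z$ and $D-z'$ are distinct translates of the reduced divisor $D$, and two distinct translates of a reduced divisor on an abelian variety satisfy no inclusion (an inclusion would force $D-z=D-z'$ by an equidimensional-components argument, hence $z-z'$ would lie in the stabilizer of $D$, which is trivial since the polarization is principal, i.e. $K(\mathcal{O}(D))=0$); so one may pick $w_1\in(D-z)\setminus(D-z')$, and then use the $n-2\ge1$ remaining free parameters among $w_2,\dots,w_{n-1}$ (with $w_n$ determined) to keep every $z'+w_i$ off $D$. An injective holomorphic immersion of a compact complex manifold is an embedding, so it only remains to show $\phi_n$ is an immersion for $n\ge3$.

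For the immersion property, fix $z$ and a nonzero $v\in\C^g\cong T_zX$; I want $f\in\mathrm{R}_n^\Omega$ with $f(z)=0$ and $\mathrm{d}f_z(v)\ne0$. With $f=\prod_i\theta(\cdot+w_i,\Omega)$, if $z+w_1$ is the only one of the points $z+w_i$ on $D$, then the product rule gives $\mathrm{d}f_z=\big(\prod_{i\ge2}\theta(z+w_i,\Omega)\big)\,\mathrm{d}\theta_{z+w_1}$, so I need $z+w_1$ to be a smooth point of $D$ whose (translated) tangent hyperplane does not contain $v$, together with $z+w_i\notin D$ for $i\ge2$. A generic $w_1$ with $z+w_1\in D$ achieves the first requirement provided the Gauss map of $D$ is non-degenerate, i.e. its image among the hyperplanes of $\C^g$ is not contained in the hyperplanes through a fixed vector; this is standard when $D$ is irreducible (the Gauss map is then dominant), and in the decomposable case $\Omega=\mathrm{diag}(\Omega_1,\dots)$ it follows by inspecting the Gauss images of the components of $D$. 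The remaining $w_i$ are handled by the surplus of free parameters as before, and letting $v$ vary shows $\mathrm{d}(\phi_n)_z$ is injective, finishing the case $n\ge3$. For $n=2$, choose the basis of $\mathrm{R}_2^\Omega$ given by $z\mapsto\thetacar{a}{0}(2z,2\Omega)$ with $a\in\tfrac12\Z^g/\Z^g$; each basis function is even in $z$ because $\thetacar{a}{0}(-w,2\Omega)=\thetacar{-a}{0}(w,2\Omega)=\thetacar{a}{0}(w,2\Omega)$ (since $-a\equiv a\bmod\Z^g$), hence $\phi_2(z)=\phi_2(-z)$ identically and $\phi_2$ factors through $z\mapsto-z$. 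Conversely, if $\phi_2(z)=\phi_2(z')$ with $z\not\equiv\pm z'$, then $\theta(\cdot+w,\Omega)\theta(\cdot-w,\Omega)\in\mathrm{R}_2^\Omega$ separates $z$ from $z'$ for a suitable $w$: its zero locus is $(D-w)\cup(D+w)$, and because $D=-D$ (the function $\theta$ is even) the same argument as above shows $D-z$ lies in neither $D-z'$ nor $z'-D$, so some $w\in D-z$ avoids both. Thus $\phi_2$ identifies precisely $z$ with $-z$ (and, by the same separation plus properness, it embeds the quotient $X/\!\sim$, the Kummer variety). I expect the main obstacle to be exactly the geometry of translates of the theta divisor — triviality of its stabilizer and non-degeneracy of its Gauss map, both of which require a little extra care when $\Omega$ is decomposable and $D$ reducible — while the verifications that the displayed products lie in $\mathrm{R}_n^\Omega$ and that level-$2$ functions are even are routine quasi-periodicity computations.
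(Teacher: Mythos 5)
The paper offers no proof of its own here—it simply cites Mumford's \emph{Abelian Varieties}—and your sketch is precisely the classical Lefschetz argument from that source: sections built as products $\prod_i\theta(\cdot+w_i,\Omega)$ of translates of the Riemann theta function with $\sum w_i\in\Z^g$, base-point freeness and point separation via the triviality of the stabilizer of the theta divisor (equivalently $K(\mathcal{O}(D))=0$ for a principal polarization), tangent separation via non-degeneracy of the Gauss map, and evenness of the level-$2$ basis for the Kummer quotient. So your approach is correct and is essentially the same as the cited reference. One small caveat worth noting: in your $n=2$ separation step, the facts $D-z\not\subseteq D-z'$ and $D-z\not\subseteq D+z'$ do not by themselves yield a point of $D-z$ outside $(D-z')\cup(D+z')$ when $D$ is reducible (distinct components could be swallowed by the two translates separately), so that sub-argument needs a little more care in the decomposable case; however, that converse statement (that $\phi_2$ embeds the Kummer variety) goes beyond what the theorem actually asserts, which is only that the level-$2$ map factors through $z\sim -z$, and your evenness computation already establishes that.
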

\begin{proof}See \cite[Page 29]{Mum70}.\end{proof}
Several bases for R$_n^{\Omega}$ are well-known. We use $\mathscr{F}_n=\left\{\thetacar{0}{b}(z,\frac{\Omega}n),b\in \frac 1n\Z^g/\Z^g\right\}$, and $\mathscr{F}_{(n,1)^n}=\left\{\thetacar{0}{b}(z,\Omega)^n,b\in \frac 1n\Z^g/\Z^g\right\}$.
Theta functions have a lot of properties. The following two  are  useful for an implementation of these functions. For a matrix $X$, denote by $X_0$ the vector composed of the diagonal entries of $X$.
\begin{prop} \label{funceq} Let  $\gamma=\pMat\in\Gamma_g$, $e'=\frac 12(\,^t\!AC)_0$ and $e''=\frac 12(\,^t\!DB)_0$.
Then for all vectors $a,b$ in $\mathbb{Q}^g$,  $z$ in $\C^g$ and $\Omega$ in $\HH_g$ we have
\[\begin{array}{cl}
\thetacar{a}{b}(\gamma z,\gamma\Omega)=&\zeta_{\gamma}\sqrt{\det(C\Omega+D)}\exp\left(i\pi\,^t\!z(C\Omega+D)^{-1}Cz\right)
\theta\left[\,^t\!\gamma\begin{ppsmallmatrix}a\\b\end{ppsmallmatrix}+\begin{ppsmallmatrix}e'\\e''\end{ppsmallmatrix}\right](z,\Omega)\\
&\cdot\exp (-i\pi\,^t\!aA\,^t\!Ba)\exp(-i\pi\,^t\!bC\,^t\!Db)\exp(-2i\pi\,^t\!aB\,^t\!Cb)\\
&\cdot\exp(-2i\pi\,^t\!(\,^t\!Aa+\,^t\!Cb+e')e'')\\
\end{array}\] 

where $\zeta_\gamma$ is an eighth root of unity depending only on $\gamma$. 
\end{prop}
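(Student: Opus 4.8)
The plan is to reduce the general transformation formula to two cases that generate $\Gamma_g$ via Proposition~\ref{genG2}, namely the translations $M_{i,j}$ and the "inversion" $J$, and then verify the cocycle-type consistency that lets us patch these together. First I would establish the formula for $\gamma = M_{i,j}$, i.e. $\begin{ppsmallmatrix}I_g&m\\0&I_g\end{ppsmallmatrix}$ with $m$ symmetric: here $C\Omega+D = I_g$, so $\det(C\Omega+D)=1$, $e' = \frac12(\,^t\!AC)_0 = 0$, $e'' = \frac12(\,^t\!DB)_0 = \frac12 m_0$, and $\gamma z = z$, $\gamma\Omega = \Omega + m$. One plugs $z + \Omega a + b$ and $\Omega + m$ directly into the series defining $\theta(z,\Omega)$, expands $\exp(i\pi\,^t\!(n+a)(\Omega+m)(n+a))$, and uses that $\,^t\!(n+a)m(n+a) = \,^t\!n m n + 2\,^t\!n m a + \,^t\!a m a$ with $\,^t\!n m n \equiv \,^t\!n m_0 \pmod 2$ (because $m$ has integer entries and zero-one structure off-diagonal contributes an even amount). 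Collecting the resulting phase factors and matching them against the claimed right-hand side is a bookkeeping exercise; the characteristic shift $\,^t\!\gamma\begin{ppsmallmatrix}a\\b\end{ppsmallmatrix} + \begin{ppsmallmatrix}e'\\e''\end{ppsmallmatrix} = \begin{ppsmallmatrix}a\\ma+b+\frac12 m_0\end{ppsmallmatrix}$ is exactly what absorbs the cross term $2\,^t\!n m a$.

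Next I would handle $\gamma = J = \begin{ppsmallmatrix}0&I_g\\-I_g&0\end{ppsmallmatrix}$, where $C\Omega+D = -\Omega$, $Jz = -\Omega^{-1}z$, $J\Omega = -\Omega^{-1}$, $e' = e'' = 0$, and $\,^t\!J\begin{ppsmallmatrix}a\\b\end{ppsmallmatrix} = \begin{ppsmallmatrix}-b\\a\end{ppsmallmatrix}$. This is the genuinely analytic step: one invokes the classical higher-dimensional theta inversion formula (Poisson summation / the functional equation for $\theta(z,-\Omega^{-1})$), which produces the factor $\sqrt{\det(-i\Omega)}$, an exponential $\exp(i\pi\,^t\!z\Omega^{-1}z)$, and the characteristic swap $(a,b)\mapsto(-b,a)$; the eighth root of unity $\zeta_J$ packages the branch ambiguity of the square root together with the constant from Poisson summation. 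I would cite Mumford~\cite{Mumford83} or Klingen~\cite{Klingen} for this inversion identity rather than reprove it.

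Then I would combine the two by induction on the word length of $\gamma$ in the generators. Writing $\gamma = \gamma_1\gamma_2$ and applying the formula for $\gamma_1$ to the point $(\gamma_2 z, \gamma_2\Omega)$ and the formula for $\gamma_2$ to $(z,\Omega)$, one checks that the automorphy factors compose correctly: $\det(C\Omega+D)$ satisfies the chain rule $\det(C_{\gamma_1\gamma_2}\Omega+D_{\gamma_1\gamma_2}) = \det(C_{\gamma_1}(\gamma_2\Omega)+D_{\gamma_1})\det(C_{\gamma_2}\Omega+D_{\gamma_2})$, the quadratic exponential terms in $z$ add up by the analogous identity for the symmetric matrices $(C\Omega+D)^{-1}C$, and the characteristic shifts compose because $\,^t\!(\gamma_1\gamma_2) = \,^t\!\gamma_2\,^t\!\gamma_1$ together with the fact that $\begin{ppsmallmatrix}e'\\e''\end{ppsmallmatrix}$ transforms correctly mod $\Z^{2g}$ under the generators — this last point being where the "well-definedness modulo $\Z^{2g}$" of characteristics (periodicity $\theta\begin{psmallmatrix}a+p\\b+q\end{psmallmatrix} = \exp(2i\pi\,^t\!ap)\theta\begin{psmallmatrix}a\\b\end{psmallmatrix}$ for integer $p,q$) is used to discard integral parts of $e',e''$. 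The product of the two eighth roots of unity is again an eighth root of unity depending only on $\gamma$, which gives $\zeta_\gamma$ and completes the induction.

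The main obstacle will be the $J$-step, or more precisely the careful tracking of the root of unity $\zeta_\gamma$ through the composition: showing it depends only on $\gamma$ and not on the chosen factorization requires knowing that the various phase exponentials — the ones linear and quadratic in $a,b$ on the third and fourth lines of the statement — are precisely designed so that what is left over after they are stripped off is multiplicative in $\gamma$ up to eighth roots. I would not attempt to pin down $\zeta_\gamma$ explicitly (the classical theory of the theta multiplier system, e.g. Igusa's $\kappa(\gamma)$, does this and is notoriously delicate); for the purposes of the paper it suffices that it is an eighth root of unity and a function of $\gamma$ alone, which follows formally once the two generator cases and the composition law are in hand.
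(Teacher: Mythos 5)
The paper provides no proof of its own for this proposition: it simply refers the reader to Igusa \cite[Chapter 5, Theorem 2]{Igu72} and to Cosset's thesis \cite[Proposition 3.1.24]{Cosset}. Your outline is, in fact, the generator-based strategy that those references carry out, so your approach is the ``same'' one in the only meaningful sense available. Your treatment of $M_{i,j}$ is right, including the identity $\,^t\!nmn \equiv \,^t\!nm_0 \pmod{2}$ for integer $n$ and symmetric integer $m$, and the resulting shift of the lower characteristic to $b + ma + \tfrac12 m_0$; working through the bookkeeping does reproduce the factors $\exp(-i\pi\,^t\!a m a)\exp(-i\pi\,^t\!a m_0)$ on the right-hand side with $\zeta=1$. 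The $J$-case is indeed Poisson summation, producing the $\sqrt{\det}$, the $\exp(i\pi\,^t\!z\Omega^{-1}z)$, and the characteristic swap $(a,b)\mapsto(-b,a)$.

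The one place I would push back is the characterisation of the composition step as ``a bookkeeping exercise'' from which the properties of $\zeta_\gamma$ ``follow formally.'' When you write $\gamma=\gamma_1\gamma_2$ and stack the two transformation formulas, the composed characteristic $\,^t\!\gamma_2\bigl(\,^t\!\gamma_1\begin{ppsmallmatrix}a\\b\end{ppsmallmatrix}+\begin{ppsmallmatrix}e'_{\gamma_1}\\e''_{\gamma_1}\end{ppsmallmatrix}\bigr)+\begin{ppsmallmatrix}e'_{\gamma_2}\\e''_{\gamma_2}\end{ppsmallmatrix}$ does not literally equal $\,^t\!(\gamma_1\gamma_2)\begin{ppsmallmatrix}a\\b\end{ppsmallmatrix}+\begin{ppsmallmatrix}e'_{\gamma_1\gamma_2}\\e''_{\gamma_1\gamma_2}\end{ppsmallmatrix}$; the discrepancy lies in $\Z^{2g}$ and reducing it via the periodicity $\thetacar{a+\alpha}{b+\beta}=\exp(2i\pi\,^t\!a\beta)\thetacar{a}{b}$ spits out phase factors that must cancel exactly against the products of the exponentials on the third and fourth lines of the statement (which themselves do not compose multiplicatively). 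Verifying that this cancellation leaves behind only a factor in $\mu_8$ independent of the chosen factorisation --- so that $\zeta_\gamma$ is well defined and an eighth root of unity --- is precisely where the bulk of Igusa's proof lives; it is not automatic from the two generator cases plus the chain rule for $\det(C\Omega+D)$. Your plan is sound, but you should flag this composition verification as the heart of the argument rather than as mopping-up, and the reader will in practice need to consult \cite{Igu72} for it just as the paper directs.
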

\begin{proof}See \cite[Chapter 5, Theorem 2]{Igu72} or \cite[Proposition 3.1.24]{Cosset}.\end{proof}
 \begin{remm}
The eighth root of unity and the square root do not depend on the characteristic.
As we will always consider quotients of theta functions, we do not need to know the root and the determination of the square root.
\end{remm}
Note that a matrix acts on the characteristic. The next proposition (\cite[Page 123]{Mumford83}) 
allows us to speak about permutations.

\begin{prop}
For each $\alpha,\beta\in\Z^g$ and $a,b\in\Q^g$, $\thetacar{a+\alpha}{b+\beta}(z,\Omega)=\exp(2i\pi\,^t\!a\beta)\thetacar{a}{b}(z,\Omega).$
\end{prop}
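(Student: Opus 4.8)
The plan is to argue directly from the series definition of $\thetacar{a}{b}(z,\Omega)$ by a change of summation variable. First I would expand
\[
\thetacar{a+\alpha}{b+\beta}(z,\Omega)=\sum_{n\in\Z^g}\exp\!\big(i\pi\,^t\!(n+a+\alpha)\Omega(n+a+\alpha)+2i\pi\,^t\!(n+a+\alpha)(z+b+\beta)\big).
\]
Since $\alpha\in\Z^g$, the map $n\mapsto m=n+\alpha$ is a bijection of $\Z^g$, and the theta series converges absolutely and uniformly on compact subsets of $\C^g\times\HH_g$ (as noted after the definition of $\theta$), so this reindexing is legitimate. Under the substitution $n+a+\alpha$ becomes $m+a$, and the quadratic term turns into $\,^t\!(m+a)\Omega(m+a)$, which is exactly the quadratic term occurring in $\thetacar{a}{b}$; thus $\alpha$ has disappeared entirely.

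It remains to deal with $\beta$, which now appears only in the linear term. I would split
\[
2i\pi\,^t\!(m+a)(z+b+\beta)=2i\pi\,^t\!(m+a)(z+b)+2i\pi\,^t\!m\beta+2i\pi\,^t\!a\beta .
\]
Because $m\in\Z^g$ and $\beta\in\Z^g$, we have $\,^t\!m\beta\in\Z$, so $\exp(2i\pi\,^t\!m\beta)=1$. The remaining factor $\exp(2i\pi\,^t\!a\beta)$ does not depend on the summation index $m$, so it factors out of the sum, and what is left inside is precisely the defining series of $\thetacar{a}{b}(z,\Omega)$. This yields $\thetacar{a+\alpha}{b+\beta}(z,\Omega)=\exp(2i\pi\,^t\!a\beta)\thetacar{a}{b}(z,\Omega)$.

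There is no real obstacle in this argument. The only two points that need to be made explicit are that $\,^t\!m\beta$ is an integer — which is exactly where the hypothesis $\beta\in\Z^g$ (rather than merely $\beta\in\Q^g$) is used, and which also explains why no such simple formula holds for shifts by a rational $\beta$ — and that the absolute convergence of the theta series justifies the reindexing $n\mapsto n+\alpha$ and the term-by-term factorization of $\exp(2i\pi\,^t\!a\beta)$.
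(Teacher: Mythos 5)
Your computation is correct and complete: the reindexing $n\mapsto n+\alpha$ absorbs $\alpha$ because $\alpha\in\Z^g$, and then $\beta$ contributes only the constant phase $\exp(2i\pi\,^t\!a\beta)$ once you observe $\,^t\!m\beta\in\Z$. The paper gives no proof of its own here — it simply cites Mumford (page 123) — and your direct verification from the series definition is exactly the standard argument that reference would supply.
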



The \emph{theta constants} of level $n$ are the theta functions of level $n$ evaluated at $z=0$. 
In the following, we will focus on the theta constants of genus $2$ with characteristic in $\{0,\frac 12\}^2$.

To simplify the notation we define for all $a=\begin{ppsmallmatrix}a_0\\a_1\end{ppsmallmatrix}$ and $b=\begin{ppsmallmatrix}b_0\\b_1\end{ppsmallmatrix}$ in $\{0,1\}^2$
\[\theta_{b_0+2b_1+4a_0+8a_1}(\Omega):=\theta_{a,b}(\Omega):=\thetacar{a/2}{b/2}(0,\Omega).\]

We have the property that $\theta_{a,b}(\Omega)=(-1)^{^t\!ab}\theta_{a,b}(\Omega)$ so that of the $16$ theta constants, $6$ are identically zero (we say that they are odd) and 
we denote $\mathcal{P}=\{0,1,2,3,4,6,8,9,12,15\}$ the subscripts of the even theta constants.

The next proposition (\cite[Chapter IV, Theorem $1$]{Igu72}) 
 establishes a relation between the $\theta_i^2(\Omega)$ for $i=0,\ldots,15$ and the $\theta_i(\Omega/2)$ for $i=0,\ldots,3$.
\begin{prop}[Duplication formula]\label{dupl} For all $a,b\in\{0,1\}^2$ and $\Omega\in\HH_2$, we have
\[\theta^2_{a,b}(\Omega)=\frac 1{4}\sum_{b_1+b_2\equiv b\bmod 2}(-1)^{^t\!ab_1}\theta_{0,b_1}(\Omega/2)\theta_{0,b_2}(\Omega/2).\]
\end{prop}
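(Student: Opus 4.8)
The plan is a direct manipulation of the defining Fourier series; nothing beyond the definitions is needed, and the absolute convergence noted above justifies every rearrangement. I would first unfold both sides into double series over $\Z^2\times\Z^2$. By definition,
\[
\theta_{a,b}(\Omega)=\thetacar{a/2}{b/2}(0,\Omega)=\sum_{r\in\Z^2}\exp\!\Big(i\pi\,^t\!(r+\tfrac a2)\Omega(r+\tfrac a2)+i\pi\,^t\!(r+\tfrac a2)b\Big),
\]
using $\exp(2i\pi\,^t\!x(b/2))=\exp(i\pi\,^t\!xb)$; similarly $\theta_{0,b_i}(\Omega/2)=\sum_{n\in\Z^2}\exp(i\pi\,^t\!n\tfrac\Omega2 n)\,(-1)^{\,^t\!nb_i}$, the last factor being a genuine sign since $n\in\Z^2$. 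Squaring the first expression and multiplying out the product in the second, the assertion becomes an identity between two double series, and it suffices to match the summands after a suitable reindexing.

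I would then compute the right-hand side. Expanding the product over $(n,m)\in\Z^2\times\Z^2$, its summand carries the sign $(-1)^{\,^t\!ab_1+\,^t\!nb_1+\,^t\!mb_2}$. As $b_2\in\{0,1\}^2$ is determined by $b_1$ through $b_1+b_2\equiv b\bmod 2$, and $m\in\Z^2$, I may rewrite $(-1)^{\,^t\!mb_2}=(-1)^{\,^t\!mb}(-1)^{\,^t\!mb_1}$; summing over $b_1\in\{0,1\}^2$ then produces the character sum
\[
\sum_{b_1\in\{0,1\}^2}(-1)^{\,^t\!(a+n+m)b_1}=\prod_{j=1}^{2}\big(1+(-1)^{(a+n+m)_j}\big),
\]
which equals $4$ when $n+m\equiv a\bmod 2$ and $0$ otherwise. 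This cancels the factor $\tfrac14$, and the right-hand side becomes
\[
\sum_{\substack{n,m\in\Z^2\\ n+m\equiv a\bmod 2}}\exp\!\Big(i\pi\,^t\!n\tfrac\Omega2 n+i\pi\,^t\!m\tfrac\Omega2 m+i\pi\,^t\!mb\Big).
\]

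It remains to recognize this as $\theta_{a,b}(\Omega)^2$. I would substitute $p=\tfrac{m-n}2$, $q=\tfrac{m+n}2$; the congruence $n+m\equiv a\bmod 2$ makes $(r,s):=(p-\tfrac a2,\,q-\tfrac a2)$ integral, and $(n,m)\mapsto(r,s)$ is a bijection from $\{(n,m)\in\Z^2\times\Z^2:n+m\equiv a\bmod 2\}$ onto $\Z^2\times\Z^2$ (with inverse $n=s-r$, $m=r+s+a$). Since $p+q=m$ and $p-q=-n$, the parallelogram identity $\,^t\!p\Omega p+\,^t\!q\Omega q=\tfrac12\big(\,^t\!(p+q)\Omega(p+q)+\,^t\!(p-q)\Omega(p-q)\big)$ turns the quadratic part of the summand into $\,^t\!p\Omega p+\,^t\!q\Omega q$, while $i\pi\,^t\!mb=i\pi\,^t\!(p+q)b=i\pi\,^t\!(r+\tfrac a2)b+i\pi\,^t\!(s+\tfrac a2)b$; hence the summand factors as the $r$-term of $\theta_{a,b}(\Omega)$ times its $s$-term, and the double sum collapses to $\theta_{a,b}(\Omega)^2$. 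The only real difficulty is the bookkeeping --- checking that every exponential I collapse to a sign has integer argument, that the parity constraint matches on both sides, and that the change of variables is genuinely bijective --- there is no conceptual obstacle, just ample room for a sign error.
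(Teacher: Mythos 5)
Your computation is correct; every step checks out, including the parity bookkeeping in the character sum and the bijectivity of the change of variables $(n,m)\mapsto(r,s)$ with $n=s-r$, $m=r+s+a$. The paper itself does not prove this proposition but cites \cite[Chapter IV, Theorem 1]{Igu72}, so there is no in-paper argument to compare against; your direct Fourier-series expansion with the parallelogram identity is the standard textbook derivation of the duplication formula and fills that gap correctly.
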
 


\begin{prop}\label{zerodiag} Let $\Omega\in\HH_2$ and $\Omega'\in\mathcal{F}_2$ be in the same class for the action of $\Gamma_2$. Then either the matrix $\Omega'$ is diagonal
and then exactly one of the even theta constants evaluated in $\Omega$ vanishes and at the same time $\theta_{15}(\Omega')=0$, or $\Omega'$ is not diagonal
and no even theta constant vanishes in $\Omega$ (nor in $\Omega'$).
\end{prop}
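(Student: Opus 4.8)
The plan is to reduce the statement to the well-known genus-$2$ theory of even and odd theta characteristics and the behaviour of theta constants on diagonal period matrices. First I would recall that for $\Omega' \in \mathcal{F}_2$, by definition $\Omega'$ is $\Gamma_2$-equivalent to $\Omega$, say $\Omega' = \gamma\Omega$ with $\gamma = \pMat$. By Proposition~\ref{funceq} applied at $z = 0$, each quotient $\theta_i(\gamma\Omega)/\theta_j(\gamma\Omega)$ equals, up to a nonzero factor (the square root $\sqrt{\det(C\Omega+D)}$ and the root of unity are nonzero and cancel), a ratio of the form $\theta\left[\,^t\!\gamma\begin{ppsmallmatrix}a\\b\end{ppsmallmatrix}+\begin{ppsmallmatrix}e'\\e''\end{ppsmallmatrix}\right](0,\Omega)$ over the analogous expression; since the action on characteristics is by a bijection of $\frac12\Z^4/\Z^4$ combined with nonzero exponential factors, the map $\gamma$ permutes the $16$ characteristics (up to nonzero scalars) and in particular sends even characteristics to even characteristics and the vanishing locus among even theta constants to the vanishing locus. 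Hence exactly the same number of even theta constants vanish at $\Omega$ as at $\Omega'$, and it suffices to analyze the situation for $\Omega' \in \mathcal{F}_2$ directly.

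Next I would treat the two cases for $\Omega' \in \mathcal{F}_2$. If $\Omega' = \begin{ppsmallmatrix}\tau_1 & 0\\0 & \tau_2\end{ppsmallmatrix}$ is diagonal, then the genus-$2$ theta constants factor as products of genus-$1$ theta constants: $\theta_{a,b}(\Omega')$ splits according to the two coordinates, and a genus-$2$ even theta constant vanishes iff one of its two genus-$1$ factors is an odd (hence identically zero) genus-$1$ theta constant. A direct bookkeeping over the characteristics — using the labelling $\theta_{b_0 + 2b_1 + 4a_0 + 8a_1}$ and the parity rule $\theta_{a,b} = (-1)^{\,^t\!ab}\theta_{a,b}$ in genus $1$ — shows that precisely one of the ten even $\theta_i$ has such a factorization with an odd genus-$1$ component (this is the index corresponding to the "even $\times$ odd" split, e.g. $\theta_{15}$ in the chosen labelling) and it vanishes, while all others are nonzero since a diagonal $\Omega'$ lies in $\mathcal{H}_1 \times \mathcal{H}_1$ where the relevant genus-$1$ even theta constants do not vanish on the fundamental domain. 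In the non-diagonal case, $\Omega'$ corresponds to the Jacobian of a genuine genus-$2$ curve, and no even theta constant vanishes: this is the classical fact that the even theta-null divisor on $\mathcal{H}_2$ is exactly the locus of decomposable (here diagonal, up to $\Gamma_2$) period matrices — I would cite the standard reference (e.g. Igusa, or it follows from Thomae's formula together with the fact that a genus-$2$ curve has six distinct Weierstrass points).

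The main obstacle I anticipate is not conceptual but organizational: carefully tracking which specific index among $\mathcal{P} = \{0,1,2,3,4,6,8,9,12,15\}$ is the one that vanishes on a diagonal matrix, and verifying that it is $\theta_{15}$ under the given labelling, while simultaneously confirming that the permutation action of $\gamma$ from Proposition~\ref{funceq} indeed never moves the vanishing even theta constant out of the even set (it cannot, since the action preserves the parity $\,^t\!ab \bmod 2$ up to the shift by $(e',e'')$, and one checks this shift respects parity for $\gamma \in \Gamma_2$ using the relations \eqref{eqSpg} that make $e', e''$ integral-compatible). I would therefore structure the proof as: (i) parity and the even-set are $\Gamma_2$-stable, so reduce to $\Omega' \in \mathcal{F}_2$; (ii) diagonal case by genus-$1$ factorization, identifying the unique vanishing index and noting $\theta_{15}(\Omega') = 0$; (iii) non-diagonal case by the classical characterization of the theta-null locus; (iv) transport back to $\Omega$ via the permutation, which gives the "nor in $\Omega'$" parenthetical for free.
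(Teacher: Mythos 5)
Your reduction to $\Omega' \in \mathcal{F}_2$ via the functional equation, your factorization argument for the diagonal case, and the identification $\theta_{15} = \thetacar{1/2}{1/2}(0,\tau_1)\thetacar{1/2}{1/2}(0,\tau_2) = 0$ as the unique odd-times-odd even characteristic are all correct. But there is a genuine gap in the non-diagonal case, which is actually the hard part of the statement. The classical fact you cite says that the even theta-null divisor in $\HH_2$ is the $\Gamma_2$-orbit of the diagonal locus; so if some even $\theta_i(\Omega') = 0$ with $\Omega' \in \mathcal{F}_2$, you may only conclude that $\Omega'$ is $\Gamma_2$-\emph{equivalent} to a diagonal matrix — not that $\Omega'$ itself is diagonal. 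You write "in the non-diagonal case, $\Omega'$ corresponds to the Jacobian of a genuine genus-2 curve," which silently assumes the implication "not diagonal $\Rightarrow$ not decomposable" inside $\mathcal{F}_2$; that is precisely the contrapositive of what must be proved. The issue is nontrivial because diagonal matrices lie on the boundary of $\mathcal{F}_2$ (the Minkowski condition gives $\Im(\Omega_{12}) \ge 0$ with equality on the diagonal), so you cannot appeal to uniqueness of the fundamental-domain representative: two equivalent boundary points of $\mathcal{F}_2$ need not coincide. Closing the gap requires either a case analysis of the boundary identifications of $\mathcal{F}_2$ to show they preserve diagonality, or — as Dupont actually does in the cited Proposition 6.5 — direct estimates on the Fourier expansions of the ten even theta constants in the fundamental domain, establishing that $\theta_0,\ldots,\theta_{12}$ are uniformly bounded away from zero on $\mathcal{F}_2$ while $\theta_{15}$ vanishes iff $\Omega_{12} = 0$. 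Your route is essentially algebraic/geometric; Dupont's is analytic and pins down the "which index and where" claim that your argument only transports but does not prove.

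A minor point: you say the genus-1 even theta constants "do not vanish on the fundamental domain"; the clean (and needed) statement is that the three even genus-1 theta constants are nowhere vanishing on all of $\HH_1$ (their product is $2\eta^3$), since the diagonal entries of an $\Omega' \in \mathcal{F}_2$ need not individually lie in $\mathcal{F}_1$.
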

\begin{proof}
See \cite[Proposition 6.5 and Corollary 6.1]{Dupont}.
\end{proof}

Let 
\[h_4=\sum_{i\in\mathcal{P}}\theta_i^8, \quad  h_6=\sum_{60\textrm{ triples }(i,j,k)\in\mathcal{P}^3}\pm(\theta_i\theta_j\theta_k)^4,\]\[
  h_{10}=\prod_{i\in\mathcal{P}}\theta_i^2, \quad h_{12}=\sum_{15\textrm{ tuples }(i,j,k,l,m,n)\in\mathcal{P}^6}(\theta_i\theta_j\theta_k\theta_l\theta_m\theta_n)^4,\]
\[  \textrm{and}\quad h_{16}=\frac 13 (h_{12}h_4-2h_6h_{10}). \]
(see for example \cite{Streng,Dupont,Weng} for the exact definition).
The functional equation of Proposition~\ref{funceq} shows that $h_i$ is a Siegel modular form of weight $i$ for the group $\Gamma_2$.  
\begin{deff}\label{defIg} We call Igusa invariants or $j$-invariants the functions $j_1,j_2,j_3$ defined by
\[ j_1:=\frac{h_{12}^5}{h_{10}^6}, \quad j_2:=\frac{h_4h_{12}^3}{h_{10}^4},\quad j_3:=\frac{h_{16}h_{12}^2}{h_{10}^4}.\] 
\end{deff}

\begin{theo} The field $K$ of Siegel modular functions in dimension 2 is $\C(j_1,j_2,j_3)$.
\end{theo}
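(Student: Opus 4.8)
The plan is to show that the Siegel modular functions in dimension $2$ form the field $\C(j_1,j_2,j_3)$ by a dimension/transcendence-degree argument combined with an explicit degree count. The starting point is the classical fact, due to Igusa, that the graded ring of Siegel modular forms of even weight for $\Gamma_2$ is the polynomial ring $\C[h_4,h_6,h_{10},h_{12}]$ (this is Igusa's structure theorem; one may cite \cite{Igu72}). A Siegel modular \emph{function} is by definition a quotient $f_1/f_2$ of two modular forms of the same weight; writing each $f_i$ as a polynomial in the $h_k$ and separating into homogeneous-of-equal-weight pieces, one sees that $f$ lies in the field generated over $\C$ by all quotients $h_{\mathbf{a}}/h_{\mathbf{b}}$ of monomials $h_{\mathbf{a}}=h_4^{a_1}h_6^{a_2}h_{10}^{a_3}h_{12}^{a_4}$ of equal total weight.

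The next step is to check that this field of weight-zero quotients is exactly $\C(j_1,j_2,j_3)$. On the one hand each $j_i$ is visibly such a quotient (of weight $0$ by construction, since $h_{16}=\frac13(h_{12}h_4-2h_6h_{10})$ has weight $16$), so $\C(j_1,j_2,j_3)\subseteq K$. For the reverse inclusion one expresses the generating quotients in terms of the $j_i$: from $j_1,j_2,j_3$ one recovers, up to a common scaling that cancels in any weight-zero expression, the ratios $h_{12}^5:h_4h_{12}^3h_{10}^2:h_{16}h_{12}^2h_{10}^2:h_{10}^6$, hence $h_4/h_{10}^{?}$-type ratios and $h_6$ via the $h_{16}$ relation; concretely $h_4 h_{12}^2/h_{10}^2 = j_2 j_1^{-?}\cdots$ — one simply solves the three displayed equations for the three independent weight-zero coordinates. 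It is cleanest to argue that $j_1,j_2,j_3$ are algebraically independent (so $\operatorname{trdeg}_\C \C(j_1,j_2,j_3)=3$, matching $\dim \Gamma_2\backslash\HH_2 = 3$) and that $[K:\C(j_1,j_2,j_3)]$ is finite, and then that this extension is trivial because $\C(j_1,j_2,j_3)$ is already the function field of the coarse moduli space: the map $\Gamma_2\backslash\HH_2 \to \mathbb{A}^3$, $\Omega\mapsto(j_1,j_2,j_3)(\Omega)$, is birational onto its image (this is Igusa's result that $j_1,j_2,j_3$ are coordinates on the moduli space of principally polarized abelian surfaces, generically Jacobians of genus-$2$ curves, with the Igusa–Clebsch invariants).

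Putting these together: $K$ is a finite extension of the rational function field $\C(j_1,j_2,j_3)$ of transcendence degree $3$, and since every modular function arises as a ratio of monomials in $h_4,h_6,h_{10},h_{12}$, and those four forms are algebraically independent generators of the full ring of even-weight forms with the single auxiliary form $h_{16}$ determined by them, the three independent weight-zero combinations $j_1,j_2,j_3$ already generate all such ratios. Hence $K=\C(j_1,j_2,j_3)$. (One should also remark that allowing odd-weight forms adds nothing to the function field: odd-weight forms for $\Gamma_2$ are multiples of a suitable power/product involving the cusp form, and their ratios reduce to even-weight ratios.)

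\textbf{The main obstacle} is invoking Igusa's structure theorem for the ring of Siegel modular forms of genus $2$ and his identification of $(j_1,j_2,j_3)$ with a coordinate system on the moduli space; everything else is elementary manipulation of graded rings and weights. Without that input one would have to reprove that the Satake/Baily–Borel compactification of $\Gamma_2\backslash\HH_2$ is the weighted projective space $\mathbb{P}(4,6,10,12)$, which is the real content. Since the paper is free to cite \cite{Igu72} (as it does repeatedly), the argument above is short; I would present it essentially as "this is \cite[\S...]{Igu72}" with the weight bookkeeping for the $h_i$ and $j_i$ spelled out for the reader's convenience.
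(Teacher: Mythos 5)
Your proposal is essentially the same as the paper's, which gives no argument and simply cites Igusa's 1962 paper; you lean on the same two Igusa inputs (the structure theorem for the graded ring of genus-$2$ modular forms and the birationality of the $(j_1,j_2,j_3)$-map onto its image in $\mathbb{A}^3$) and the rest of your sketch is bookkeeping around them. The only caveat worth flagging is that your attempt to solve the displayed equations explicitly for the weight-zero monomial ratios is left with "$?$" placeholders, so the inclusion $K\subseteq\C(j_1,j_2,j_3)$ in your write-up really rests on the birationality citation rather than the algebra; that is fine, but then the explicit-solving paragraph is not doing any work and could be cut in favor of the identity (\ref{IgToIg}) already in the paper, which shows concretely that $h_4^5/h_{10}^2$, $h_4^2h_{12}/h_{10}^2$ and $h_4h_6/h_{10}$ are rational in $j_1,j_2,j_3$.
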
\begin{proof}See \cite{Igu62}.\end{proof}

Generically, by \cite{Igu60}, two principally polarized abelian surfaces are isomorphic if and only if they have the same $j$-invariants.

Let $\Gamma$ be a subgroup of $\Gamma_2$ of index $k$. Denote by $\C_\Gamma$ the field of meromorphic functions of $\HH_2$ invariant under the action of $\Gamma$
(it is the function field of $\Gamma\bb\HH_2$).
In particular, $\C_{\Gamma_2}=K$. By  \cite{Freitag}, $\C_\Gamma$ is a finite algebraic extension of degree $k$ of $\C_{\Gamma_2}$.

Let $f$ be a modular function, $\gamma\in\Gamma_2$ and $p$ a prime number. We define the matrix $\gamma_p:=\pMatp$ and the functions  $f^\gamma$, $f_p$ and $f_p^\gamma$ from $\HH_2\to \C$ by
 $f^\gamma(\Omega)=f(\gamma\Omega)$, $f_p(\Omega)=f(p\Omega)$ and $f_p^\gamma(\Omega)=f(p\gamma\Omega)$ respectively.
Let $\Gamma_{0}(p):=\left\{\pMat\in\Gamma_2:C\equiv 0\bmod p\right\}$.

The three functions $j_{\ell,p}:=(j_\ell)_p$ are invariant under the group $\Gamma_0(p)$. 
Indeed, if $\gamma\in\Gamma_0(p)$, then $p\gamma\Omega=\gamma_p(p\Omega)$ so that $j_{\ell,p}^\gamma(\Omega)=j_\ell(p\gamma\Omega)=j_\ell(\gamma_p(p\Omega))=j_\ell(p\Omega)=j_{\ell,p}(\Omega)$.
In other words,  $\Omega$ is equivalent to $\gamma\Omega$ for $\gamma\in\Gamma_2$, but that does not mean that $p\Omega$ is equivalent to $p\gamma\Omega$: it is the case only if $\gamma$ is in $\Gamma_0(p)$.

Let $C_p$  be a set of representatives of the quotient $\Gamma_2/\Gamma_0(p)$. The period matrices of the $(p,p)$-isogenous varieties of a variety $\Omega$ are the $p\gamma\Omega$ for $\gamma\in C_p$ (by Theorem~$3.2$ of \cite{BL}). 

Proposition~$10.1$ of \cite{Dupont} gives
 $C_p$ for each $p$ and it tells us that $[\Gamma_2:\Gamma_0(p)]=p^3+p^2+p+1$.

\begin{lemm}\label{CorpsFctG0p} For a prime $p$, $\C_{\Gamma_0(p)}$ equals $K(j_{\ell,p})$ for $\ell=1,2,3$.
\end{lemm}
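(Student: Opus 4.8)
The plan is to identify $\C_{\Gamma_0(p)}$ as a degree-$[\Gamma_2:\Gamma_0(p)]$ extension of $K=\C_{\Gamma_2}$ and then to show that $K(j_{1,p},j_{2,p},j_{3,p})$ already realizes this full degree, so that the two fields must coincide. First I would invoke the result of Freitag cited above: since $\Gamma_0(p)$ has finite index $k=p^3+p^2+p+1$ in $\Gamma_2$ (by Proposition~10.1 of \cite{Dupont}), $\C_{\Gamma_0(p)}$ is an algebraic extension of $K$ of degree exactly $k$. Next, I would observe that the three functions $j_{\ell,p}=(j_\ell)_p$ are indeed in $\C_{\Gamma_0(p)}$: this was checked just before the statement, using $p\gamma\Omega=\gamma_p(p\Omega)$ for $\gamma\in\Gamma_0(p)$ and the $\Gamma_2$-invariance of the $j_\ell$. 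Hence $K\subseteq K(j_{1,p},j_{2,p},j_{3,p})\subseteq\C_{\Gamma_0(p)}$, and it suffices to prove that $[K(j_{1,p},j_{2,p},j_{3,p}):K]\ge k$, equivalently (by finiteness) $=k$.

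To get the lower bound on the degree, I would use the standard Galois-theoretic / covering-space description of the conjugates of $j_{\ell,p}$ under $\Gamma_2$. Let $C_p$ be a set of coset representatives for $\Gamma_2/\Gamma_0(p)$, so $|C_p|=k$. For $\gamma\in\Gamma_2$ the function $(j_{\ell,p})^\gamma$ depends only on the coset $\gamma\Gamma_0(p)$ (since $j_{\ell,p}$ is $\Gamma_0(p)$-invariant), and these give the $k$ functions $\Omega\mapsto j_\ell(p\gamma\Omega)$, $\gamma\in C_p$ — precisely the $j$-invariants of the $k$ distinct $(p,p)$-isogenous abelian surfaces of $\Omega$ (by Theorem~3.2 of \cite{BL}, as noted above). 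The key point is that these $k$ conjugate triples $\big(j_1(p\gamma\Omega),j_2(p\gamma\Omega),j_3(p\gamma\Omega)\big)$ are pairwise distinct as functions on $\HH_2$: generically the $k$ isogenous varieties $p\gamma\Omega$ are non-isomorphic, and by \cite{Igu60} non-isomorphic principally polarized abelian surfaces have distinct $j$-invariants generically; hence no two of the conjugate triples agree identically. Therefore the orbit of $(j_{1,p},j_{2,p},j_{3,p})$ under $\Gamma_2$ has size $k$, so the compositum $K(j_{1,p},j_{2,p},j_{3,p})$ has at least $k$ distinct embeddings fixing $K$ inside a suitable Galois closure, whence $[K(j_{1,p},j_{2,p},j_{3,p}):K]\ge k$. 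Combined with the containment in the degree-$k$ extension $\C_{\Gamma_0(p)}$, equality of degrees forces $K(j_{1,p},j_{2,p},j_{3,p})=\C_{\Gamma_0(p)}$.

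The main obstacle is making the "pairwise distinctness of the conjugate triples" step rigorous: one must argue that the $k$ cosets genuinely produce $k$ distinct functions, i.e. that there is no unexpected coincidence $j_\ell(p\gamma\Omega)\equiv j_\ell(p\gamma'\Omega)$ for $\gamma\Gamma_0(p)\ne\gamma'\Gamma_0(p)$. This amounts to checking that $\Gamma_0(p)$ is exactly the stabilizer (up to the $\pm I$ issue) of the function field data, rather than merely contained in it — equivalently that the cover $\Gamma_0(p)\bb\HH_2\to\Gamma_2\bb\HH_2$ of degree $k$ is not a composition through a smaller cover on which the $j_{\ell,p}$ already descend. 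I would handle this by the isogeny interpretation: the $p\gamma\Omega$ for distinct cosets are distinct points of $\Gamma_2\bb\HH_2$ for generic $\Omega$ (distinct maximal isotropic subgroups of the $p$-torsion give non-isomorphic quotients generically), and by \cite{Igu60} the $j$-invariants separate isomorphism classes generically, so distinctness as points of the moduli space transfers to distinctness of the invariant triples as meromorphic functions on $\HH_2$. One may alternatively phrase the whole argument without the generic caveats by noting $[\Gamma_2:\Gamma_0(p)]=k$ and $[\C_{\Gamma_0(p)}:K]=k$ already, so it is enough to exhibit $k$ distinct conjugates, for which a single well-chosen $\Omega$ suffices.
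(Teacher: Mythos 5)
The lemma, as the paper subsequently uses it, asserts that \emph{each single} $j_{\ell,p}$ already generates $\C_{\Gamma_0(p)}$ over $K$: right after the statement, the paper concludes that $j_{2,p}$ and $j_{3,p}$ lie in $K(j_{1,p})$, and that $\Phi_{1,p}(X)=\prod_{\gamma\in C_p}(X-j_{1,p}^\gamma)$ is the minimal polynomial of $j_{1,p}$ (so its $k=p^3+p^2+p+1$ roots must be pairwise distinct as functions). Your argument proves only the weaker statement that the \emph{compositum} $K(j_{1,p},j_{2,p},j_{3,p})$ equals $\C_{\Gamma_0(p)}$: you show the $\Gamma_2$-orbit of the \emph{triple} $(j_{1,p},j_{2,p},j_{3,p})$ has $k$ elements, which gives $[K(j_{1,p},j_{2,p},j_{3,p}):K]\ge k$, but two distinct triples can share a first coordinate, so nothing in the argument rules out $j_{1,p}$ being invariant under a group strictly between $\Gamma_0(p)$ and $\Gamma_2$, i.e.\ $[K(j_{1,p}):K]<k$. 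That is exactly the content the paper relies on, so the proposal does not yet establish the lemma.

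To close the gap one must show, for each fixed $\ell$, that the stabilizer of the single function $j_{\ell,p}$ in $\Gamma_2$ is precisely $\Gamma_0(p)$. One route is to invoke maximality of the Siegel parabolic: $\Gamma_0(p)$ is a maximal subgroup of $\Gamma_2$ (its image in $\textrm{Sp}_4(\F_p)$ is the stabilizer of a Lagrangian, a maximal parabolic), so the stabilizer of $j_{\ell,p}$ is either $\Gamma_0(p)$ or all of $\Gamma_2$, and the latter is excluded since $j_{\ell,p}=j_\ell(p\,\cdot)$ is not a Siegel modular function for $\Gamma_2$ (e.g.\ it is not $J$-invariant). Another route is to exhibit a single $\Omega$ for which the $k$ values $j_\ell(p\gamma\Omega)$, $\gamma\in C_p$, are pairwise distinct, not merely the $k$ triples. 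For reference, the paper itself supplies no independent proof --- it cites Lemma~4.2 of Bröker--Lauter \cite{BL} --- so any self-contained argument must cover the single-generator claim, which yours currently does not.
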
 \begin{proof} See \cite[Lemma 4.2]{BL}.\end{proof}

Note that the functions $j_{\ell}$ have poles at $\Omega\in\HH_2$ such that $h_{10}(\Omega)=0$. This happens when $\theta_i(\Omega)=0$ for some $i$.
By Proposition~\ref{zerodiag}, if $\Omega'\in\mathcal{F}_2$ is equivalent to $\Omega$, then $\Omega'$ is diagonal. We deduce that $\Omega$ corresponds to a product of elliptic curves. 
So the functions $j_{\ell,p}$ have poles at $\Omega\in\HH_2$ corresponding to varieties that are $(p,p)$-isogenous to a product of elliptic curves.

Let $\Phi_{1,p}(X)=\prod_{\gamma\in C_p}(X-j_{1,p}^\gamma)$. It is the minimal polynomial of $j_{1,p}$ over $K$. As the functions $j_{2,p}$ and
$j_{3,p}$ are contained in $K(j_{1,p})=K[j_{1,p}]$ by Lemma~\ref{CorpsFctG0p}, we define $\Phi_{2,p}(X)$, and $\Phi_{3,p}(X)$ to be the monic polynomials in $K[X]$ of degree less than 
$\deg(\Phi_{1,p}(X))$ satisfying $j_{2,p}=\Phi_{2,p}(j_{1,p})$ and $j_{3,p}=\Phi_{3,p}(j_{1,p})$. 

Furthermore  we have for $\ell=2,3$ that 
$\Phi_{\ell,p}(j_{1,p})=\Psi_{\ell,p}(j_{1,p})/\Phi_{1,p}'(j_{1,p})$ where
\[\Psi_{\ell,p}(X)=\sum_{\gamma\in C_p}j_{\ell,p}^\gamma\prod_{\gamma'\in C_p\bb\{\gamma\}}(X-j_{1,p}^{\gamma'}).\]

\begin{deff}\label{defPolModJ}
Let $p$ be a prime number. We call $\Phi_{1,p}(X)$, $\Psi_{2,p}(X)$ and $\Psi_{3,p}(X)$ the modular polynomials for $j_1$, $j_2$ and $j_3$.
\end{deff}

 For any prime $p$, the modular polynomials $\Phi_{1,p}(X)$, $\Phi_{2,p}(X)$, $\Phi_{3,p}(X)$ lie in the ring $\Q(j_1,j_2,j_3)[X]$ (see \cite[Theorem 5.2]{BL}).
This is also the case for $\Psi_{\ell,p}(X)$ for $\ell=2,3$ so that we will focus on $\Phi_{1,p}(X)$, $\Psi_{2,p}(X)$ and $\Psi_{3,p}(X)$.
The evaluation map $\C(j_1,j_2,j_3)\to\C$ sending $j_i$ to $j_i(\Omega)$ maps these polynomials to polynomials in $\C[X]$. 
The meaning of $\Phi_{1,p}(X)$ is that its roots evaluated at $\Omega\in\HH_2$ are the
$j_1$-invariants of the principally polarized abelian surfaces that are $(p,p)$-isogenous to the variety $\Omega$. Moreover, if $x$ is such a root, then $(x,\Phi_{2,p}(x),\Phi_{3,p}(x))$ are the 
$j$-invariants of a principally polarized abelian surface $(p,p)$-isogenous to a variety with invariants $(j_1(\Omega)$, $j_2(\Omega)$, $j_3(\Omega))$.

Denote by $\mathcal{L}_p$ the locus of all the principally polarized abelian surfaces which are $(p,p)$-isogenous to a product of elliptic curves. 
This locus $\mathcal{L}_p$ is a $2$-dimensional algebraic subvariety of the $3$-dimensional
moduli space $\Gamma_2\bb\HH_2$ and can be parameterized by an equation $L_p=0$ for a polynomial $L_p$ in $\Q[j_1,j_2,j_3]$.

\begin{lemm}\label{DenIg} The denominators of the coefficients of $\Phi_{1,p}(X)$, $\Psi_{2,p}(X)$ and $\Psi_{3,p}(X)$ are all divisible by the polynomial $L_p$.
\end{lemm}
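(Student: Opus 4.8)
The plan is to work at a generic point of $\mathcal{L}_p$ and read off a genuine pole in the coefficients. First note that the coefficients of $\Phi_{1,p}(X)=\prod_{\gamma\in C_p}(X-j_{1,p}^\gamma)$ are, up to sign, the elementary symmetric functions of the family $(j_{1,p}^\gamma)_{\gamma\in C_p}$, and the coefficients of $\Psi_{\ell,p}(X)$ are polynomials in the $j_{1,p}^\gamma$ with coefficients built from the $j_{\ell,p}^\gamma$; since every $j_{\ell,p}^\gamma$ lies in $K=\C(j_1,j_2,j_3)$ (indeed in $\Q(j_1,j_2,j_3)$), so do all these coefficients. It therefore suffices to exhibit, in each of $\Phi_{1,p}$, $\Psi_{2,p}$, $\Psi_{3,p}$, one coefficient that, viewed as a rational function on the $j$-space, has a pole along the hypersurface $\mathcal{L}_p=\{L_p=0\}$: such a coefficient has $L_p$ as a factor of its denominator, hence so does the common denominator of the polynomial.

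Next I would describe the behaviour of the $j_{\ell,p}^\gamma$ near $\mathcal{L}_p$. As recalled just before the statement, $j_\ell$ has a pole exactly where $h_{10}$ vanishes, i.e.\ by Proposition~\ref{zerodiag} on the locus of products of elliptic curves; hence $j_{\ell,p}^\gamma(\Omega)=j_\ell(p\gamma\Omega)$ has a pole precisely when $p\gamma\Omega$ is equivalent under $\Gamma_2$ to a product of elliptic curves, and on the moduli space $\Gamma_2\bb\HH_2$ the locus $\mathcal{L}_p$ is the union over $\gamma\in C_p$ of the loci where the $\gamma$-th $(p,p)$-isogenous variety is a product. Each of these is a $2$-dimensional subvariety of the $3$-dimensional space $\Gamma_2\bb\HH_2$, so at a generic point $\Omega_0$ of $\mathcal{L}_p$ exactly one index $\gamma_0\in C_p$ satisfies ``$p\gamma_0\Omega_0$ is a product''; thus near $\Omega_0$ the single function $j_{1,p}^{\gamma_0}$ blows up, while all the other $j_{1,p}^\gamma$ ($\gamma\neq\gamma_0$) are regular at $\Omega_0$, and generically also nonzero and pairwise distinct there.

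The conclusion is then immediate for two well-chosen coefficients. The coefficient of $X^{\deg\Phi_{1,p}-1}$ in $\Phi_{1,p}(X)$ is $-\sum_{\gamma\in C_p}j_{1,p}^\gamma$, which near $\Omega_0$ is the sum of the function $j_{1,p}^{\gamma_0}$, having a pole, and of finitely many functions regular at $\Omega_0$; no cancellation is possible, so this coefficient has a pole along $\mathcal{L}_p$ and $L_p$ divides its denominator. Likewise the leading coefficient of $\Psi_{\ell,p}(X)$ is $\sum_{\gamma\in C_p}j_{\ell,p}^\gamma$, and the same argument (only the $\gamma_0$-term is singular at $\Omega_0$) shows that $L_p$ divides its denominator. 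This already gives the lemma in the form ``$L_p$ divides the common denominator of each of $\Phi_{1,p}$, $\Psi_{2,p}$, $\Psi_{3,p}$''.

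To obtain the divisibility for every non-leading coefficient one argues the same way but must check that the poles contributed by $\gamma_0$ do not cancel. Writing $u=j_{1,p}^{\gamma_0}$ and letting $v_1,\dots$ denote the remaining $j_{1,p}^\gamma$ (regular near $\Omega_0$), the coefficient $(-1)^m e_m$ of $\Phi_{1,p}$ equals $u\,e_{m-1}(v)+e_m(v)$, whose pole along $\mathcal{L}_p$ has the same order as that of $u$ as soon as $e_{m-1}(v)$ does not vanish at the generic point of $\mathcal{L}_p$; for $\Psi_{\ell,p}$ one factors $u$ out of the $\gamma\neq\gamma_0$-terms and uses in addition that $j_1$ has strictly larger pole order along the product locus than $j_2,j_3$ do (power $6$ versus power $4$ of $h_{10}$ in the denominator), so the $\gamma_0$-term, of lower pole order, cannot cancel it. The point that needs genuine care — and which I expect to be the main, if minor, obstacle — is precisely this non-vanishing at the generic point of $\mathcal{L}_p$ of the relevant symmetric functions of the ``other'' invariants $j_{1,p}^\gamma$, $\gamma\neq\gamma_0$; it should follow from a dimension count showing that near $\Omega_0$ the tuple $(j_{1,p}^\gamma)_{\gamma\neq\gamma_0}$ varies enough not to land in the relevant hypersurfaces, but making this precise is the technical heart of the full statement. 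For the uses of Lemma~\ref{DenIg} made later, only the two explicit coefficients above are in fact needed.
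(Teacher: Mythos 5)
Your proposal takes essentially the same approach as the paper's (the paper simply cites \cite[Lemma 6.2]{BL}, and the same argument is reproduced in Section~\ref{analysis} for the $b'_i$ invariants): at a generic point $\Omega_0$ of $\mathcal{L}_p$ exactly one $j_{1,p}^{\gamma_0}$ blows up, so each coefficient, being a symmetric expression in the $j_{\ell,p}^\gamma$, acquires a pole there, forcing $L_p$ into the denominator. Where the cited proof dismisses cancellation with the one-line claim that ``generically there is no algebraic relation between these values,'' you unpack that step, giving an explicit non-cancellation argument for the leading coefficients and a pole-order comparison (order $6$ from $j_1$ versus $4$ from $j_2,j_3$) for the $\Psi_{\ell,p}$; you also correctly flag, as the remaining technical point, the generic non-vanishing of the relevant symmetric functions of the other $j_{1,p}^\gamma$, which is precisely what BL's genericity claim sweeps up. This is a faithful, somewhat more careful rendering of the same proof rather than a different route.
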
\begin{proof}See \cite[Lemma 6.2]{BL}.\end{proof}
We are particularly interested in the denominators of the modular polynomials because they are at the cause of many difficulties when compute these poynomials.

\section{Interpolation}

We explain in this section how to interpolate multivariate polynomials and rational fractions, which will be needed to compute modular polynomials by evaluation and interpolation.
The problem is the following:
we assume that we have  an algorithm $f$ such that for any $x_1,\ldots,x_n\in\mathbb{C}$
it returns the value $P(x_1,\ldots,x_n)$ (or a floating point approximation thereof), where
$P$ is an unknown multivariate polynomial or rational fraction $P(X_1,\ldots,X_n)$ with complex coefficients. We want to find $P$.

We denote  $\mathcal{M}(d)$ the time to multiply polynomials of degree less than or equal to $d$ with coefficients having $N$ bits and $\mathcal{M'}(N)$ the time complexity to multiply two integers of $N$ bits.
By \cite[Corollary 8.19]{MCA}, we have that $\mathcal{M}(d)\in O(d\log{d}\,\mathcal{M'}(N))$ if we use the FFT and if we assume that $N\in\Omega(\log{d})$, which is necessary to distinguish between the different 
roots of unity used in the FFT. Moreover, $\mathcal{M'}(N)\in O(N\log{N}\log{\log{N}})$ (see \cite{SchnelleMult}).

Following the basic idea of \cite{Dupont}, we work out all the details and give a complexity analysis.
\subsection{Interpolation of a multivariate polynomial}

The problem of interpolating a univariate polynomial $P$ is well-known and can be solved by Lagrange's or Newton's method, which need $\deg(P)+1$ evaluations.
The complexity of fast interpolation is $O(\mathcal{M}(\deg(P))\log(\deg(P)))$ (see \cite[Section 10, Corollary 10.12]{MCA}).
 
In the case of a bivariate polynomial $P(X,Y)$, we notice that it can be written in the following way
\[P(X,Y)=\sum_{i=0}^{d_X}\left(\sum_{j=0}^{d_Y} c_{i,j}Y^j\right)X^i=\sum_{i=0}^{d_X}c_i(Y)X^i.\]
We can compute $P(X,y)$ for a fixed $y$ by evaluating $P(x_i,y)$ for $i=1,\ldots,d_{X}+1$ and interpolating. The
$\ell$-th coefficient of this polynomial is $c_\ell(y)$, which is a univariate polynomial. It can be obtained if one has computed $c_\ell(y_j)$ for $d_Y+1$ values $y_j$.

Thus to obtain $P(X,Y)$ we proceed as follows. For $j$ from $1$ to $d_Y+1$, fix $y_j$ and choose $d_X+1$ values $x_i$ (one may choose the same values for different $j$), then 
evaluate $P(x_i,y_j)$ and interpolate a univariate polynomial to find $P(X,y_j)$. Finally, for each $\ell=0,\ldots,d_X$, interpolate $c_\ell(Y)$.
Hence interpolating a bivariate polynomial needs $(d_Y+1)(d_X+1)$ evaluations and the complexity 
for the interpolation is \[(d_Y+1)O(\mathcal{M}(d_X)\log(d_X))+(d_X+1)O(\mathcal{M}(d_Y)\log(d_Y))\subseteq\tilde{O}(d_Xd_YN).\]

The interpolation of a trivariate polynomial can be done in a similar way. We write it as
\[P(X,Y,Z)=\sum_{i=0}^{d_X}\left(\sum_{j=0}^{d_Y}\left(\sum_{k=0}^{d_Z}c_{i,j,k}Z^k\right)Y^j\right)X^i=\sum_{i=0}^{d_X}\left(\sum_{j=0}^{d_Y} c_{i,j}(Z)Y^j\right)X^i=\sum_{i=0}^{d_X}c_i(Y,Z)X^i.\] 
If, for fixed $y$ and $z$, we evaluate $P(x_i,y,z)$,  $i=1,\ldots,d_X+1$, and then  interpolate, we obtain $P(X,y,z)$ and the
$\ell$-th coefficient is $c_\ell(y,z)$. This gives us a method to evaluate $c_\ell(Y,Z)$ for different values and we can use what we said about
bivariate polynomials to find $c_\ell(Y,Z)$. 

Thus we proceed as follows. For $j$ from $1$ to $d_Y+1$ and for $k$ from $1$ to $d_Z+1$ we evaluate $P(x_i,y_j,z_k)$ for $d_X+1$ values $x_i$ and then 
we do $(d_Y+1)(d_Z+1)$ interpolations to obtain  all the $P(X,y_j,z_k)$.
Each of the $d_X+1$ coefficients $c_\ell(Y,Z)$ is a bivariate polynomial and we have given above the complexity to obtain it.
All in all, we will do $(d_X+1) (d_Y+1) (d_Z+1)$ evaluations and the complexity for the interpolation is
\[(d_Y+1)(d_Z+1)O(\mathcal{M}(d_X)\log(d_X))+(d_X+1)(d_Z+1)O(\mathcal{M}(d_Y)\log(d_Y))+\]\[(d_X+1)(d_Y+1)O(\mathcal{M}(d_Z)\log(d_Z))
\subseteq\tilde{O}(d_Xd_Yd_ZN).\]

We can generalise this improved algorithm  recursively to the case of a polynomial in $n$ variables $X_1,\ldots, X_n$. 
It takes $\prod_{i=1}^n(d_{X_i}+1)$ evaluations. The complexity for the interpolation of a polynomial in $n$ variables is
\[
\sum_{i=1}^n\prod_{\substack{j=1\\j\ne i}}^n(d_{X_j}+1)O(\mathcal{M}(d_{X_i})\log{d_{X_i}})\subseteq\tilde{O}\left(\prod_{i=1}^nd_{X_i}N\right)
\]
Note the symmetry which means that the ordering of the variables does not matter.

\subsection{Interpolation of a multivariate rational fraction}


We begin with the univariate case: $F(X)=\frac{A(X)}{B(X)}$, with $A(X)=\sum_{i=0}^{d^A_X}A_iX^i\in\C[X]$ and $B(X)=\sum_{i=0}^{d^B_X}B_iX^i\in\C[X]$. 
We look for the solution with minimal degrees. Each pair $(A,B)$ is then defined only up to a multiplicative constant.

Let $n=d^A_X+d^B_X+1$. Writing $A(X)-F(X) B(X)=0$ induces us to proceed with linear algebra: it suffices to evaluate $F$ in $n+1$ 
values $x_i$ and to find the coefficients $A_i$ and $B_i$ by solving the following linear system 
$$
\begin{pmatrix}
1&x_1&\ldots&x_1^{d^A_X}&-F(x_1)&-F(x_1)\cdot x_1&\ldots&-F(x_1)\cdot x_1^{d^B_X}\\
\vdots&\ddots&\vdots&\vdots&\vdots&\vdots&\ddots&\vdots\\
1&x_n&\ldots&x_n^{d^A_X}&-F(x_n)&-F(x_n)\cdot x_n&\ldots&-F(x_n)\cdot x_n^{d^B_X}\\
\end{pmatrix}\cdot
\begin{pmatrix}
A_0\\\vdots\\A_{d^A_X}\\B_0\\\vdots\\B_{d^B_X}
\end{pmatrix}
=\begin{pmatrix}
 0 \\[2ex]
    \vdots \\[2ex]
   0 \\
\end{pmatrix}
$$


This method is easy to implement, but its complexity is bad.
Another solution consists in using Cauchy interpolation (see \cite[Section 5.8]{MCA}) with the fast Euclidean algorithm (\cite[Section 11]{MCA}),
which produces an algorithm of complexity $O(\mathcal{M}(n)\log(n))$. The number of evaluations is $n$. 


We explain it briefly. Let $k$ and $m$ be such that $\deg A<k$ and $\deg B\le m-k$. Take $x_1,\ldots,x_m\in\C$ and $y_i=F(x_i)$ for $1\le i\le m$.
Let $f$ be an interpolating polynomial. We look at polynomials $r(X)$ and $t(X)$ such that for any $i$, $r(x_i)=t(x_i)F(x_i)$; this implies
$(r(x_i)=t(x_i)y_i=t(x_i)f(x_i)$ \ssi $r\equiv tf\bmod (X-x_i)$ for all $i)$ and by the Chinese remainder theorem, it is equivalent to ask that $r\equiv tf\bmod g$, where $g=\prod_{i=1}^{m}(X-x_i)$.

We use then the extended euclidean algorithm on $g$ and $f$. Let $r_j$, $s_j$, $t_j$ be the $j$-th row of the algorithm, where $j$ is minimal such that $\deg r_j<k$ 
(namely $r_1=g,r_2=f,s_1=1,s_2=0,t_1=0,t_2=1$ and $r_\ell=gs_\ell+ft_\ell$ for each row $\ell$).
By Corollary $5.18$ of \cite[Section 5.8]{MCA}, $r_j$ and $t_j$ verify $r_j(x_i)=t_j(x_i)y_i$  and $t_j(x_i)\ne 0$ for all $i$.

Thus, it suffices to compute this row to interpolate the fraction $F$. It is possible to compute a single row with the fast Euclidean algorithm.



We study now the bivariate case $F(X,Y)=\frac{A(X,Y)}{B(X,Y)}$, with 
\[A(X,Y)=\sum_{i=0}^{d^A_X}\sum_{j=0}^{d^A_Y}c_{i,j}^AX^iY^j=\sum_{i=0}^{d_X^A}c_i^A(Y)X^i\in\C[X,Y]\]
 and similarly for $B(X,Y)$. 
One could use linear algebra, but the complexity would be very bad. Thus we would like to proceed as in the bivariate case for polynomials, namely by fixing values $y_j$ and computing
the fractions $F(X,y_j)$ and then by interpolating the coefficients as polynomials in $Y$.

If $F(X,Y)\in\Q[X,Y]$, then for each rational fraction found, one can force the numerator and the denominator to have content $1$, but because of the multiplicative constant, this will not work,
as shown by the next example.
\begin{exem}
Assume that we are searching $F(X,Y)=\frac{3X^2Y^2+Y+2}{3XY+3}$ and that we find  $F(X,1)=\frac{X^2+1}{X+1}$,
$F(X,2)=\frac{12X^2+4}{6X+3}$, $F(X,3)=\frac{27X^2+5}{9X+3}$ and also $F(X,5)=\frac{75X^2+7}{15X+3}$.
For the greater coefficient of the numerator $c^A_2(Y)$: if we interpolate with $y_i=2,3,5$,
which gives us $c_2^A(y_i)=12,27,75$, we find $3Y^2$ which is correct; on the other side, with $y_i=1,2,3$ giving 
$c_2(y_i)=1,12,27$, we obtain the polynomial $2Y^2+5Y-6$, which is completely wrong, because of the simplification
between the numerator and the denominator in the case $Y=1$.
\end{exem}

\begin{remm}\label{simplpol}
For some values of $Y$, there is a simplification by a polynomial; for example $F(X,-5)=-5X-1$. This is something that is immediately 
noticed because the degrees in $X$ are smaller. We assume that every time it happens, we drop the instance.
\end{remm}

In the example, if we had fixed the coefficient of degree $0$
of the denominator of the univariate rational fractions computed at some value ($3$ for instance), the simplification would not have been a problem.
This is not true in general.

\begin{exem} This time write 
$F(X,1)=\frac{X^2+1}{X+1}$, $F(X,2)=\frac{3X^2+1}{\frac{3}{2}X+\frac{3}4}$, $F(X,3)=\frac{\frac{27}5 X^2+1}{\frac 95 X+\frac 35}$,~\ldots, where we always 
fix the coefficient of degree $0$ of the numerator at $1$. Then we deduce by interpolation that $c^A_0(Y)=1$, which is wrong. Indeed, when we divide by a constant 
to obtain the coefficient $1$, it is as if we had divided by $Y+2$.
\end{exem}

Thus the difficulty is that we have to normalize while being
 sure that the $i$-th coefficient of the numerator and the denominator of each fraction in $X$ comes from the evaluation
of the same polynomial in $Y$.

This normalization is easy to obtain in the very particuliar case where we already know one of the $c_i^A(Y)$ or $c_i^B(Y)\in\C[Y]$
 (different from $0$): we only 
have to multiply the solution found  by the constant which gives us
the good evaluation for the known $c_i$. We can then obtain (using Cauchy interpolation) the fraction with $n(d_Y+1)$ evaluations
and the complexity of the interpolation is
\[(d_Y+1)O(\mathcal{M}(n)\log(n))+(n+1)O(\mathcal{M}(d_Y)\log(d_Y))\subseteq\tilde{O}(d_Xd_YN)\]
where $d_X=\max(d_X^A,d_X^B)$,  $d_Y=\max(d_Y^A,d_Y^B)$ and $n=d_X^A+d_X^B+1\le 2d_X+1$.
\begin{exem}
We continue the preceding example. Assume we know $c_0^A(Y)=Y+2$. We have $c_0^A(1)=3$ and instead of the fraction $\frac{X^2+1}{X+1}$
we take $\frac{3X^2+3}{3X+3}$. We also have $c_0^A(2)=4$ and we write $F(X,2)=\frac{12X^2+4}{6X+3}$ and so on.
\end{exem}


In general, an idea to avoid this difficulty is to consider the fraction $F'(X,Y)=F(X,YX)=\frac{A'(X,Y)}{B'(X,Y)}$
 because in this case, $c_0^{B'}(Y)$ is a constant.
If it is not $0$, we can choose to fix it to be $1$ and then the previous argument (one $c_i^{B'}(Y)$ known) holds.
Thus we have $F(X,YX)$ and we substitute $Y$ by $Y/X$ to obtain $F(X,Y)$.
Since $d_X^{A'}=d_T^A$ and $d_X^{B'}=d_T^B$, where the subscript $T$ stands for the total degree, the complexity is $\tilde{O}(d_Td_YN)$ (and $d_T=\max(d_T^A,d_T^B)$).


Note also that in the particular case where the coefficient of degree zero of $c_0^B(Y)$ is $0$, this method does not work.
To overcome this difficulty we can consider $F(X+r,Y+s)$ instead of $F(X,Y)$ for some values $r$ and $s$ such that this coefficient will not be zero.

We study now the trivariate case. We want to interpolate $F(X,Y,Z)=\frac{A(X,Y,Z)}{B(X,Y,Z)}$ with $A(X,Y,Z)$, $B(X,Y,Z)$ in $\C[X,Y,Z]$.
Denote $d_T=\max(d_T^A,d_T^B)$ (and similarly for $d_X$, $d_Y$ and $d_Z$) and $n=d_T^A+d_T^B+1$.
As in the bivariate case, we compute $F(X,XY,XZ)$ and then substitute $Y$ by $Y/X$ and $Z$ by $Z/X$ to obtain $F(X,Y,Z)$.
We explain how to compute $F(X,XY,XZ)$ recursively:

\begin{enumerate}
\item Suppose we are able to compute $F(X,XY,zX)$ for a fixed $z\in\mathbb{C}$. Then we only need  $d_Z+1$ evaluations 
in $z_i$ to interpolate (as polynomials) each coefficient in $Z$ and find $F(X,XY,XZ)$. The number of coefficients is bounded above by $(n+1)(d_Y+1)$ so
that the interpolation complexity for this step is $(n+1)(d_Y+1)O(\mathcal{M}(d_Z)\log(d_Z))$.
\item To obtain $F(X,XY,zX)$ for a fixed $z$, it suffices to apply the interpolation algorithm in the bivariate case. We will do this step $d_Z+1$ times so that
the complexity is\\ $(d_Z+1)((d_Y+1)O(\mathcal{M}(n)\log(n))+(n+1)O(\mathcal{M}(d_Y)\log(d_Y)))$.
\end{enumerate}

In doing this, the number of evaluations will be $n(d_Y+1)(d_Z+1)$ and the final interpolation complexity is $\tilde{O}(d_Td_Yd_ZN)$.
(In the special case where we already know one of the $c_i^A(Y,Z)$ or $c_i^B(Y,Z)$, the complexity will be $\tilde{O}(d_Xd_Yd_ZN)$).

An improvement of this algorithm is obtained in noting that there is the possibility to substitute $Y$ by $Y/X$ in the second step to find $F(X,Y,zX)$ and to compute $F(X,Y,XZ)$
in the first one. Thus, the number of coefficients in the first step will be bounded above by $(n'+1)(d_Y+1)$ where $n'$ is $\deg_X^A(F(X,Y,zX))+\deg_X^B(F(X,Y,zX))+1$,
which is $\le n$, which allows one to reduce the number of interpolations.
The complexity is then 
\[(d_Y+1)(d_Z+1)O(\mathcal{M}(n)\log(n))+(n+1)(d_Z+1)O(\mathcal{M}(d_Y)\log(d_Y))+\]\[
(n'+1)(d_Y+1)O(\mathcal{M}(d_Z)\log(d_Z))\subseteq \tilde{O}(d_Td_Yd_ZN).\]

We can generalize this recursively to the case of a rational fraction $F$ with $m$ variables $X_1,\ldots,X_m$.
We find  
\[\prod_{i=2}^m(d_{X_i}+1)O(\mathcal{M}(n)\log(n))+\sum_{j=2}^m\prod_{\substack{i=2\\i \neq j}}^m(d_{X_i}+1)n(j)O(\mathcal{M}(d_{X_j})\log(d_{X_j}))\subseteq\tilde{O}(d_T\prod_{i=2}^md_{X_i}N)\]
where $n=d_T^A+d_T^B+1$ and $n(j)$ is one plus the degree in $X_1$ of the numerator plus the degree in $X_1$ of the denominator 
of $F(X_1,X_2,\ldots,X_{j-1},X_jX_1,\ldots,X_{m}X_1)$.

Note that all these formulae for the complexity in the case of rational fractions are asymmetric so that the choice of the order of the variables is important.
The formulae suggest that it is preferable to take $X_1$ as the variable with the largest degree. In that case, $n\le 6d_{X_1}+1$ and the complexity of the interpolation 
is then $\tilde{O}(\prod_{i=1}^md_{X_i}N)$.

\section{Evaluation}

We have seen that the modular polynomials lie in the ring $\Q(j_1,j_2,j_3)[X]$ so that we have to interpolate trivariate rational fractions to compute them.
Using the method of interpolation of a rational fraction $F$ exposed in the preceding section requires one to evaluate it at the points  $F(x_i,x_iy_j,x_iz_k)$,
where there exists $\Omega\in\HH_2$ such that $(j_1(\Omega),j_2(\Omega),j_3(\Omega))=(x_i,x_iy_j,x_iz_k)$. 
We present here the method presented in \cite{Dupont} to deduce a matrix $\Omega\in\HH_2$ from its $j$-invariants
and then we present a way to extend this algorithm for other invariants.

\subsection{Computing modular polynomials with the $j$-invariants}\label{algoRegis}
In practice, the modular polynomials have large coefficients and degrees. We use floating point multiprecision to compute them
and the letter $N$ will designate this precision in bits.
We have an input $(x,y,z)\in\C^3$ and we are looking for $\Omega\in\HH_2$ such that $(j_1(\Omega),j_2(\Omega),j_3(\Omega))=(x,y,z)$.

The key to do this is to look at the Borchardt mean. 
Let $(z_k)_{k\in \{1,2,3\}}\in\C^3$. We define the Borchardt sequence for $k\in \{1,2,3\}$ by
\[u_0^{(0)}=1\qquad\textrm{and}\qquad u_k^{(0)}=z_k \] 
and recursively for all $n\ge 0$
\[u_0^{(n+1)}=\frac 14\sum_{k=0}^3 u_k^{(n)} \qquad\textrm{and}\qquad
u_k^{(n+1)}=\frac 14\sum_{k_1+k_2\equiv k\bmod 4} v_{k_1}^{(n)}v_{k_2}^{(n)} \]
where $v_0^{(n)}$ is any square root of $u_0^{(n)}$ and  $v_k^{(n)}=0$ if $v_0^{(n)}=0$ or $u_k^{(n)}=0$, otherwise
$v_k^{(n)}$ is the square root of $u_k^{(n)}$ such that $\left|v_0^{(n)}-v_k^{(n)}\right|\le \left|v_0^{(n)}+v_k^{(n)}\right|$ and with $\Im\left(v_k^{(n)}/v_0^{(n)}\right)>0$ if there is equality.

This sequence converges to a unique complex number called the Borchardt mean and denoted by $B_2((z_k)_{k\in\{1,2,3\}})$. 
Let $b_i(\Omega):=\theta_i^2(\Omega)/\theta_0^2(\Omega)$ for $1\le i\le 15$. We have 

\begin{prop}[\cite{Dupont}, Proposition $9.1$]\label{BorThet0} For all $\tau\in\mathcal{F}_2$, $ B_2(b_1(\tau),b_2(\tau),b_3(\tau))=\frac 1{\theta_0^2(\tau)}$.\end{prop}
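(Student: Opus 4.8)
The plan is to connect the Borchardt recursion with the duplication formula for theta constants (Proposition~\ref{dupl}). First I would observe that, since we are dealing with ratios of the form $\theta_i^2(\Omega)/\theta_0^2(\Omega)$, it suffices to show that the sequence $(u_k^{(n)})_{k\in\{0,1,2,3\}}$ starting from $u_0^{(0)}=1$, $u_k^{(0)}=b_k(\tau)$ is, up to the normalising factor $\theta_0^2(\tau)$, exactly the sequence whose $n$-th term is $\theta_k^2(2^n\tau)/\theta_0^2(2^n\tau)$ — or more precisely, that $\theta_0^2(2^n\tau)\cdot u_k^{(n)} = \theta_k^2(2^n\tau)$ for a consistent choice of square roots at each level. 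Writing $\Omega_n = 2^n\tau$ so that $\Omega_{n} = \Omega_{n+1}/2$, the duplication formula of Proposition~\ref{dupl} with $z=0$, applied to the four even characteristics $(0,b)$ with $b\in\{0,1\}^2$ (i.e.\ the indices $0,1,2,3$), reads
\[\theta^2_{0,b}(\Omega_{n+1})=\frac 1{4}\sum_{b_1+b_2\equiv b\bmod 2}\theta_{0,b_1}(\Omega_{n})\theta_{0,b_2}(\Omega_{n}),\]
which is formally the Borchardt step $u_k^{(n+1)}=\frac14\sum_{k_1+k_2\equiv k}v_{k_1}^{(n)}v_{k_2}^{(n)}$ once we identify $v_{k}^{(n)}$ with $\theta_{0,b}(\Omega_n)$ (suitably scaled) and the index addition mod $4$ with the componentwise addition of $b\in(\Z/2)^2$. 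The step $u_0^{(n+1)}=\frac14\sum_k u_k^{(n)}$ corresponds to the same identity for $b=0$, using that $\theta_{0,b_1}\theta_{0,b_2}=\theta_{0,b_1}^2$ when $b_1=b_2$.

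The core of the argument is therefore a bookkeeping induction: assuming $u_k^{(n)}=\theta_k^2(\Omega_n)/\theta_0^2(\tau)$ for all $k\in\{0,1,2,3\}$ (with the convention that the $n=0$ case is the definition, since $\theta_0^2(\Omega_0)/\theta_0^2(\tau)=1=u_0^{(0)}$ and $\theta_k^2(\tau)/\theta_0^2(\tau)=b_k(\tau)=u_k^{(0)}$), I need to check that the square root $v_k^{(n)}$ selected by the sign condition $|v_0^{(n)}-v_k^{(n)}|\le|v_0^{(n)}+v_k^{(n)}|$ (with the imaginary-part tie-break) is precisely $\theta_k(\Omega_n)/\theta_0(\tau)$, i.e.\ that the ``good'' choice of square root matches the theta value. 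This is where I would invoke the fact, established in \cite{Dupont} and underlying Proposition~\ref{BorThet0}, that for $\tau\in\mathcal{F}_2$ the theta constants $\theta_k(2^n\tau)$ stay in a region where the canonical sign choice agrees with the ``optimal'' Borchardt choice — concretely, that $\Re(\theta_k(\Omega_n)/\theta_0(\Omega_n))>0$ (or the appropriate half-plane condition) for all $n\ge 0$ when $\tau$ lies in the fundamental domain. Granting this, the induction goes through and we get $u_k^{(n)}\to\theta_k^2(\Omega_n)/\theta_0^2(\tau)$; since $\Im(\Omega_n)\to\infty$ coordinatewise, $\theta_0(\Omega_n)\to 1$ and $\theta_k(\Omega_n)\to 0$ for $k\ne 0$ among the relevant indices (the lattice sum is dominated by its $n=0$ term $1$ for $\theta_0$, and has no constant term for the others), so $u_0^{(n)}\to 1/\theta_0^2(\tau)$ and the limit, which is by definition $B_2(b_1(\tau),b_2(\tau),b_3(\tau))$, equals $1/\theta_0^2(\tau)$.

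The main obstacle is the sign/branch-tracking step: verifying that the Borchardt square-root selection rule reproduces the theta values rather than some spurious sign flip. This requires controlling the quantities $\theta_k(2^n\tau)/\theta_0(2^n\tau)$ uniformly in $n$ for $\tau$ in the fundamental domain $\mathcal{F}_2$ — showing they never cross the boundary where the selection rule would switch branches, and handling the degenerate case (covered by Proposition~\ref{zerodiag}) where $\tau$ is equivalent to a diagonal matrix and one even theta constant vanishes, so that $v_k^{(n)}=0$ is forced and the recursion still behaves correctly. Since Proposition~\ref{BorThet0} is quoted from \cite[Proposition~9.1]{Dupont}, in the write-up I would simply cite that reference for this analytic input and present only the duplication-formula identification explicitly; the rest is the routine induction and the limit computation sketched above.
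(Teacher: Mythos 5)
The paper offers no proof of its own here: Proposition~\ref{BorThet0} is stated with the attribution ``\cite{Dupont}, Proposition~9.1'' and no proof environment follows, so there is nothing in-text to compare against. The remark after Algorithm~\ref{IgToTau} confirms that the intended argument is exactly the one you sketch: the duplication formula identifies the Borchardt step with $\Omega_n\mapsto 2\Omega_n$ on squared theta constants, and the substantive input is a sign-control lemma guaranteeing that the ``optimal'' square-root rule reproduces the actual theta values for $\tau\in\mathcal{F}_2$. Your outline is therefore the right one, and deferring the sign-tracking step to \cite{Dupont} is reasonable for a cited result.

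Two local slips are worth correcting, though neither breaks the argument. First, your ``more precisely, that $\theta_0^2(2^n\tau)\cdot u_k^{(n)}=\theta_k^2(2^n\tau)$'' states the wrong invariant: if $u_k^{(n)}=\theta_k^2(\Omega_n)/\theta_0^2(\Omega_n)$, the step $u_0^{(n+1)}=\tfrac14\sum_k u_k^{(n)}$ yields $\theta_0^2(\Omega_{n+1})/\theta_0^2(\Omega_n)$, which is not $1$, so this recursion does not close. The invariant you actually carry through the induction, $u_k^{(n)}=\theta_k^2(\Omega_n)/\theta_0^2(\tau)$ with the denominator \emph{fixed} at $\theta_0^2(\tau)$, is the correct one; delete the contradictory sentence. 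Second, the claim that ``$\theta_k(\Omega_n)\to 0$ for $k\ne 0$ \dots\ has no constant term for the others'' is false: all four of $\theta_0,\theta_1,\theta_2,\theta_3$ have characteristic $a=0$, so each lattice sum has constant term $1$, and in fact $\theta_k(\Omega_n)\to 1$ for every $k\in\{0,1,2,3\}$ as $\Im(\Omega_n)\to\infty$. This is precisely why all four $u_k^{(n)}$ share the common limit $1/\theta_0^2(\tau)$ demanded by the definition of the Borchardt mean; the claim you wrote would instead contradict convergence. Finally, the rule ``$k_1+k_2\equiv k\bmod 4$'' as printed in the paper must be read as componentwise addition in $(\Z/2\Z)^2$ (XOR on $\{0,1,2,3\}$) — the literal $\Z/4\Z$ reading gives the wrong pairs for $k=2$, for instance — so the identification you make implicitly is the intended one, and you should state it rather than gloss it as ``identify the index addition mod $4$ with \dots''.
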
 
Note that from this proposition and the ten even $b_i(\Omega)$, we can deduce all the $\theta_i^2(\Omega)=b_i(\Omega)\theta_0^2(\Omega)$ at the working precision (with some loss of precision).

\begin{conj}[\cite{Dupont}, Conjecture $9.1$]\label{conj} With the notation of Proposition~\ref{genG2} we have, for all $\tau\in\FF$ and for all $\gamma\in\{(JM_{1,1})^2,$  $(JM_{1,2})^2,$  $(JM_{2,2})^2\}$:
$ B_2(b_1(\gamma\tau),b_2(\gamma\tau),b_3(\gamma\tau))=\frac 1{\theta_0^2(\gamma\tau)}.$
\end{conj}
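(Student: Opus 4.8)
The plan is to imitate the proof of Proposition~\ref{BorThet0} and analyse carefully what changes when $\tau$ is replaced by $\gamma\tau$. Recall that Proposition~\ref{BorThet0} is proved by checking that, for $\tau\in\mathcal{F}_2$, the Borchardt sequence attached to $(1,b_1(\tau),b_2(\tau),b_3(\tau))$ coincides at every step $n$, up to one global scalar, with $\big(\theta_0^2,\theta_1^2,\theta_2^2,\theta_3^2\big)(2^n\tau)$: by the duplication formula (Proposition~\ref{dupl}) one step of the iteration is exactly one doubling of the period, and the crux is that the ``optimal'' square roots singled out by the inequalities $|v_0^{(n)}-v_k^{(n)}|\le|v_0^{(n)}+v_k^{(n)}|$ are the analytically correct ones, $v_k^{(n)}=v_0^{(n)}\,\theta_k(2^n\tau)/\theta_0(2^n\tau)$. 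Since $\Im(2^n\tau)\to\infty$ the sequence runs to the cusp and $u_0^{(n)}\to 1/\theta_0^2(\tau)$, which gives the formula. What must therefore be established for the conjecture is that the coincidence ``optimal $=$ analytic'' persists all along the doubling orbit $(2^n\gamma\tau)_{n\ge0}$, even though $\gamma\tau$ leaves $\mathcal{F}_2$ and, unlike what happens inside $\mathcal{F}_2$, the identity $B_2(b_1(\tau'),b_2(\tau'),b_3(\tau'))=1/\theta_0^2(\tau')$ is genuinely false at a general $\tau'\notin\mathcal{F}_2$.

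First I would apply Proposition~\ref{funceq} with $\gamma\in\{(JM_{1,1})^2,(JM_{1,2})^2,(JM_{2,2})^2\}$ to write $b_k(\gamma\tau)=\pm\,\theta_{\sigma(k)}^2(\tau)/\theta_{\sigma(0)}^2(\tau)$ for the explicit permutation $\sigma$ of even characteristics induced by $\gamma$ and an explicit sign, the factors $\zeta_\gamma$, $\sqrt{\det(C\tau+D)}$ and the exponential terms all cancelling in the ratio. This records in particular which $b_k(\gamma\tau)$ can vanish: by Proposition~\ref{zerodiag} this happens only when the $\Gamma_2$-reduction of $\tau$ is diagonal, a codimension-one locus which I would set aside and recover afterwards by a continuity argument. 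Next, using the transformation law $\Im(\gamma\tau)=\,^t\!(C\overline{\tau}+D)^{-1}\,\Im(\tau)\,(C\tau+D)^{-1}$ one sees that $\Im(2^n\gamma\tau)\to\infty$, so there is an $n_0=n_0(\tau)$ beyond which all ratios $\theta_k(2^n\gamma\tau)/\theta_0(2^n\gamma\tau)$ lie in a small disc about $1$ and the optimal branch is forced to be the analytic one. It then remains to verify the sign conditions for the finitely many initial steps $n=0,\dots,n_0-1$; for $n\ge1$ one can hope to derive $\Re\big(\theta_k(2^n\gamma\tau)/\theta_0(2^n\gamma\tau)\big)>0$ from the theta Fourier expansion together with the lower bounds on $\Im$, so that everything is reduced to the single step $n=0$.

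The step $n=0$ is where I expect the real difficulty. As $\tau$ varies over $\mathcal{F}_2$ the point $\gamma\tau$ sweeps a non-compact part of $\HH_2$ far from the fundamental domain, on which $b_k(\gamma\tau)$ need not be small, and one must show that the branch of $\sqrt{u_k^{(0)}}$ chosen by $|v_0^{(0)}-v_k^{(0)}|\le|v_0^{(0)}+v_k^{(0)}|$, together with the tie-break $\Im(v_k^{(0)}/v_0^{(0)})>0$, is precisely the one matching $\theta_k(\gamma\tau)/\theta_0(\gamma\tau)$. Via the rewriting above this becomes a concrete but delicate family of inequalities among the ten even theta constants of $\tau\in\mathcal{F}_2$, in the same spirit as the estimates behind Proposition~\ref{zerodiag} and Dupont's analysis, and I do not see a soft argument for them; a rigorous proof seems to require sharp quantitative bounds on theta constants over $\mathcal{F}_2$, presumably after subdividing $\mathcal{F}_2$ into finitely many pieces (large imaginary part, moderate imaginary part, a neighbourhood of the diagonal locus) and treating each separately. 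This is exactly why the statement is left as a conjecture and is instead supported by extensive numerical evidence; I would expect its proof to be of comparable difficulty to obtaining the still-missing explicit bounds on the sizes of the modular polynomials.
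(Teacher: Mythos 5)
The statement you were asked to prove is, in the paper, explicitly left \emph{unproved}: it is stated as a conjecture, attributed to Dupont (Conjecture~9.1 of his thesis), and the paper's only commentary is the Remark following it, which records that Dupont verified it numerically on millions of random matrices, and that one can bypass it in practice by choosing the analytic square roots $\theta_i(2^n\gamma\tau)/\theta_0(\gamma\tau)$ directly rather than the ``good'' ones, since the duplication formula shows that such a sequence converges to $1/\theta_0^2(\gamma\tau)$. There is therefore no proof in the paper for your proposal to be compared against.

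Your write-up correctly recognises this. You do not claim to have a proof; instead you isolate, accurately, the precise content of the conjecture — that the square-root branch singled out by the optimality inequality $|v_0^{(n)}-v_k^{(n)}|\le|v_0^{(n)}+v_k^{(n)}|$ (with the $\Im>0$ tie-break) coincides with the analytic branch $\theta_k(2^n\gamma\tau)/\theta_0(2^n\gamma\tau)$ along the whole doubling orbit — and you locate where the difficulty sits: for $n$ large enough this is automatic because $\Im(2^n\gamma\tau)\to\infty$ pushes the ratios near $1$, so the obstruction is confined to finitely many initial steps, above all $n=0$, where $\gamma\tau$ is far from $\mathcal{F}_2$ and no soft argument is available. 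This is consistent with the paper's framing and with the Remark's workaround, which dodges exactly this branch-selection issue. Your proposed reduction via Proposition~\ref{funceq} to sign/permutation identities among the even theta constants on $\mathcal{F}_2$ is also the natural avenue and mirrors the style of Proposition~\ref{zerodiag}. In short: no gap on your side, because you were right not to claim a proof; just be aware that the paper itself supplies none, and that a rigorous treatment would indeed require the kind of sharp quantitative control over theta constants on a compactification of $\mathcal{F}_2$ that you describe.
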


If this conjecture  is true, it can easily be shown that for $\tau=\begin{ppsmallmatrix}\tau_1&\tau_3\\ \tau_3&\tau_2\end{ppsmallmatrix}$
\begin{equation} \label{eqconj1}
\tau_1=\frac{\imath}{\theta_4^2(\tau)B_2\left(\frac{\theta_0^2(\tau)}{\theta_4^2(\tau)},\frac{\theta_6^2(\tau)}{\theta_4^2(\tau)},\frac{\theta_2^2(\tau)}{\theta_4^2(\tau)}\right)},
\end{equation}
\begin{equation}\label{eqconj2}
\tau_2=\frac{\imath}{\theta_8^2(\tau)B_2\left(\frac{\theta_9^2(\tau)}{\theta_8^2(\tau)},\frac{\theta_0^2(\tau)}{\theta_8^2(\tau)},\frac{\theta_1^2(\tau)}{\theta_8^2(\tau)}\right)},
\end{equation}
and 
\begin{equation}\label{eqconj3}
\tau_3^2-\tau_1\tau_2=\frac{1}{\theta_0^2(\tau)B_2\left(\frac{\theta_8^2(\tau)}{\theta_0^2(\tau)},\frac{\theta_4^2(\tau)}{\theta_0^2(\tau)},\frac{\theta_{12}^2(\tau)}{\theta_0^2(\tau)}\right)}.
\end{equation}

If $\tau$ is in the fundamental domain, the Minkowski reduction implies that $\Im(\tau_3)\ge 0$ which allows us to extract the good square root and obtain $\tau$. 
Thus, it remains to show how to deduce from $(x,y,z)\in\C^3$, the ten $b_i(\Omega)$ where $\Omega\in\mathcal{F}_2$ is such that $(j_1(\Omega),j_2(\Omega),j_3(\Omega))=(x,y,z)$. This can be done in four steps.

\begin{enumerate}
\item 
  The first one is to use  Mestre's algorithm (see for example \cite{Mestre}) at precision $N$ to find a genus $2$ curve  $Y^2=f(X)$ over $\C$ with $f$ having degree $6$ whose Igusa invariants are $(x,y,z)$.


\item  

Once we have $f$, it is easy to deduce the set $E$ of roots of $f$ at precision $N$ 
 and from this set we use  Thomae's formula (see \cite{Thomae}).
Recall that this formula allows one to obtain the fourth power of the theta constants from an ordering of the roots of $f$ (which corresponds to
 a choice of the basis of the homology group of the Riemann surface of the hyperelliptic curve).

The problem here is that the functions $b_i$ are not invariant under the symplectic group $\Gamma_2$ (but for a subgroup as we will see later). This means 
that for two matrices equivalent under the action of $\Gamma_2$ (namely they have the same $j$-invariants), the evaluation of the $b_i$ in 
these matrices produces differents results.
Hence the theta constants found with Thomae's formula gives us $b_i^2(\gamma\Omega)$ for some unknown $\gamma\in\Gamma_2$.

\item Now use a numerical integration technique (see for example \cite{QuadGauss,Holo,Molin}) at low precision $N'$ with the same choice of the basis of the homology group  
to find the period matrix $\gamma\Omega$  that we reduce into the fundamental domain to obtain $\Omega$ at precision $N'$ and $\gamma$.
Compute $b_i(\Omega)$ at  precision $N'$ (with some algorithm to compute theta constants). 
We do not use a numerical integration technique at precision $N$ because it is too slow and it would increase the complexity of the algorithm.
\item Using the functional equation of Proposition~\ref{funceq} on $b_i^2(\gamma\Omega)$ with the matrix $\gamma^{-1}$ allows one to obtain the $b_i^2(\Omega)$ at precision $N$
and knowing $b_i(\Omega)$ at precision $N'$ is enough to deduce the good square root and obtain $b_i(\Omega)$ at precision $N$.
\end{enumerate}

\begin{supp}\label{supp}
Note that we make the assumption that the numerical integration technique provides some $\gamma\Omega$ with $\gamma$ small enough
such that it can be correctly reduced in the fundamental domain at the precision $N'$.
\end{supp}

We thus obtain the following algorithm.
\paragraph{}
\begin{algorithm}[H]
\KwData{$(x,y,z)=(j_1(\Omega),j_2(\Omega),j_3(\Omega))$ for some unknown $\Omega\in\mathcal{F}_2$, the working precision $N$ and a smaller precision $N'$}
\KwResult{$\Omega$}
\BlankLine
\nl Use Mestre's algorithm to obtain a hyperelliptic curve $Y^2=f(X)$ at precision $N$\;
\nl Deduce the ten $b_i(\Omega)$ at precision $N$ using some numerical integration technique at precision $N'$\;
\nl Use Proposition \ref{BorThet0} to obtain the square of the theta constants at the working precision\;
\nl Use (\ref{eqconj1}), (\ref{eqconj2}), (\ref{eqconj3}) to compute $\Omega$ at precision $N$ (with some loss).
\caption{$\Omega$ from $(j_1(\Omega),j_2(\Omega),j_3(\Omega))$}
\label{IgToTau}
\end{algorithm}
\paragraph{}
The second step is Algorithm $12$ of \cite{Dupont} and the third and fourth are Algorithm $13$. 
They have complexity $O(\mathcal{M'}(N))$ and $O(\mathcal{M}'(N)\log(N))$ (where $\mathcal{M'}(N)$ is the time complexity to multiply two integers of $N$ bits) so that 
the algorithm is in $\tilde{O}(N)$. 

\begin{remm} 
Starting from $(1,b_1(\gamma\tau),b_2(\gamma\tau),b_3(\gamma\tau))$ and an approximation or $\tau$, it is possible to compute a Borchardt sequence where the squares roots are choosen
to be the $\frac{\theta_i(2^n\gamma\tau)}{\theta_0(\gamma\tau)}$ at each step. Using the duplication formula (Proposition~\ref{dupl}), it can easily be proved that this will converge to $\frac 1{\theta_0^2(\gamma\tau)}$.
Thus it should be possible to not rely on the conjecture while maintenaing the same complexity, as already stated in the first variant page $200$ of \cite{Dupont} and in the remark after Theorem~$12$ of \cite{cmh}.

The conjecture has been tested and verified numerically by Dupont for many millions of random matrices. We underline that it is easy to test if the matrix $\Omega$ found at the end 
has the good $j$-invariants or not.

\end{remm} 

\subsection{New invariants for the modular polynomials} 

We begin in giving a generalization of the modular polynomials in order to have the possibility to use other invariants.
In genus~$1$, this goes back to the works of Schläfli and Weber (see \cite{Schlafli} and \cite[Section 4.2 and 4.3]{Enge}).

We consider only the congruence subgroups $\Gamma\subseteq\Gamma_2$, namely the groups with $\Gamma(n)=\{M\in\Gamma_2:M\equiv \pm Id_4\bmod n\}\subseteq \Gamma$ for some $n$. If $n$ is minimal with this property, we say that $n$ is the level of~$\Gamma$.
Let $\Gamma$ be a congruence subgroup and  $f_1$, $f_2$, $f_3$ be three modular functions which are generators for the function field of $\Gamma\bb\HH_2$.
Let $p$ be a prime number such that the level of $\Gamma$ is prime to $p$. 
Let $C_p$ be a set or representatives of $\Gamma/(\Gamma\cap\Gamma_0(p))$.

\begin{deff} \label{defPolMod} The modular polynomials for these data are, for $\ell=2,3$,
\[
\Phi_{1,p}(X)=\prod_{\gamma\in C_p}(X-f_{1,p}^\gamma)\qquad\textrm{and}\qquad
\Psi_{\ell,p}(X)=\sum_{\gamma\in C_p}f_{\ell,p}^\gamma\prod_{\gamma'\in C_p\bb\{\gamma\}}(X-f_{1,p}^{\gamma'}).
\]
\end{deff}



We will sometimes write $\Phi_{1,p}(X,f_1,f_2,f_3)$ instead of $\Phi_{1,p}(X)$ and similarly for $\Psi_{\ell,p}(X)$.
While the interpolation phase is still the same, the evaluation is slightly different: this time we have to find $\Omega\in\HH_2$ from a triple $(x_1,x_2,x_3)\in\C$  such that $f_i(\Omega)=x_{i}$.
Of course, this step depends on the three functions, but we will still give a general algorithm.
On the other side, the computation of $\Phi_{1,p}(X,f_1(\Omega),f_2(\Omega),f_3(\Omega))$ and of $\Psi_{\ell,p}(X,f_1(\Omega),f_2(\Omega),f_3(\Omega))$ for some $\Omega$ does not change. We can apply the same
algorithm and they have the same complexity (except of course for the evaluation of the $f_i(p\gamma\Omega)$).

As in the dimension $1$ case, we have tried to look at modular functions that would produce smaller modular polynomials than those with the $j$-invariants.

The first we tried are the invariants used by Streng in his thesis \cite{Streng} to obtain smaller class polynomials. These invariants for $\Gamma_2$ are defined to have 
the minimal power of $h_{10}$ in the denominators.

\begin{deff}\label{defStr}
We call Streng invariants the functions $i_1,i_2,i_3$ defined by
\[i_1:=\frac{h_4h_6}{h_{10}},\quad i_2:=\frac{h_4^2h_{12}}{h_{10}^2},\quad i_3:=\frac{h_4^5}{h_{10}^2}.\]
\end{deff}
We will also say that these are $j$-invariants. The context will make it clear if we are speaking of  Streng invariants  or of Igusa invariants. This is justified by the next theorem.
Note that it is easy to deduce from the Igusa invariants the Streng ones and vice versa.
Indeed, we have
\begin{equation}\label{IgToIg} 
i_1=\frac{j_2(j_2-3j_3)}{2j_1},\quad i_2=\frac{j_2^2}{j_1},\quad i_3=\frac{j_2^5}{j_1^3}\quad\textrm{and}\quad j_1=\frac{i_2^5}{i_3^2},\quad j_2=\frac{i_2^3}{i_3},\quad j_3=\frac{i_2^2(i_2-2i_1)}{3i_3}.
\end{equation}

\begin{theo} The field $K$ of Siegel modular functions in dimension 2 is $K=\C(j_1,j_2,j_3)=\C(i_1,i_2,i_3)$.
\end{theo}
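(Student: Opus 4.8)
The plan is to deduce the statement from the preceding theorem, which already gives $K=\C(j_1,j_2,j_3)$ (Igusa, \cite{Igu62}), together with the explicit conversion formulas~(\ref{IgToIg}). First I would observe that $i_1,i_2,i_3$ are genuine Siegel modular functions: since the functional equation of Proposition~\ref{funceq} makes each $h_i$ a modular form of weight $i$ for $\Gamma_2$, each of $i_1=h_4h_6/h_{10}$, $i_2=h_4^2h_{12}/h_{10}^2$, $i_3=h_4^5/h_{10}^2$ is a quotient of two forms of weight $0$, hence lies in $K$; here one uses that $h_{10}$ is not identically zero, so these are well-defined elements of the function field of $\Gamma_2\bb\HH_2$, and likewise $j_1=h_{12}^5/h_{10}^6\not\equiv 0$, so that the right-hand expressions $i_2^5/i_3^2$, $i_2^3/i_3$, $i_2^2(i_2-2i_1)/(3i_3)$ are meaningful as elements of that field.

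Then the two inclusions are immediate from~(\ref{IgToIg}): the first three identities express $i_1,i_2,i_3$ as rational functions of $j_1,j_2,j_3$, giving $\C(i_1,i_2,i_3)\subseteq\C(j_1,j_2,j_3)=K$; the last three express $j_1,j_2,j_3$ as rational functions of $i_1,i_2,i_3$, giving $K=\C(j_1,j_2,j_3)\subseteq\C(i_1,i_2,i_3)$. Since both are subfields of the field of meromorphic functions on $\Gamma_2\bb\HH_2$, mutual inclusion yields $K=\C(j_1,j_2,j_3)=\C(i_1,i_2,i_3)$, which is the assertion.

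The only real content is checking the conversion formulas~(\ref{IgToIg}) themselves, and I would do this by direct substitution in terms of $h_4,h_6,h_{10},h_{12}$ and $h_{16}=\frac13(h_{12}h_4-2h_6h_{10})$: e.g. $j_2^2/j_1=h_4^2h_{12}/h_{10}^2=i_2$ and $j_2^5/j_1^3=h_4^5/h_{10}^2=i_3$; from $j_2-3j_3=2h_6h_{12}^2/h_{10}^3$ one gets $j_2(j_2-3j_3)/(2j_1)=h_4h_6/h_{10}=i_1$; and symmetrically $i_2^5/i_3^2=h_{12}^5/h_{10}^6=j_1$, $i_2^3/i_3=h_4h_{12}^3/h_{10}^4=j_2$, while $i_2-2i_1=3h_4h_{16}/h_{10}^2$ gives $i_2^2(i_2-2i_1)/(3i_3)=h_{12}^2h_{16}/h_{10}^4=j_3$. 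None of this is delicate; the only thing to watch is the bookkeeping of the exponents of $h_4$, $h_{10}$ and $h_{12}$, so I do not expect any genuine obstacle. The underlying geometric content — that $(i_1,i_2,i_3)$ and $(j_1,j_2,j_3)$ define mutually inverse birational maps on $\Gamma_2\bb\HH_2$ — is a byproduct of the same computation, but is not needed for the field equality.
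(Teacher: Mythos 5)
Your proof is correct and follows exactly the route the paper indicates: the paper leaves this theorem without an explicit proof, relying on the earlier Igusa result $K=\C(j_1,j_2,j_3)$ together with the conversion formulas~(\ref{IgToIg}), which is precisely the two-inclusion argument you spell out. Your direct verification of~(\ref{IgToIg}) from the definitions and the identity $h_{16}=\frac13(h_{12}h_4-2h_6h_{10})$ checks out.
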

Moreover, we also have the properties for $\ell=1,2,3$ and a prime $p$ that the three $i_{\ell,p}$ are invariants under the group $\Gamma_0(p)$ and that $\C_{\Gamma_0(p)}=K(i_{\ell,p})$ 
(the proof is similar to the one for the invariants of Igusa).

Thus to compute the modular polynomials, the difference with the invariants of Igusa is small: from $(i_1(\Omega),i_2(\Omega),i_3(\Omega))$ to 
obtain $\Omega$, it is sufficient to use (\ref{IgToIg}) to deduce the triple $(j_1(\Omega),j_2(\Omega),j_3(\Omega))$ and then to use Algorithm \ref{IgToTau}. The computation
of the $i_\ell(p\gamma\Omega)$ (for a prime $p$, $\ell=1,2,3$ and $\gamma\in C_p$) is equivalent by (\ref{IgToIg}) to the computation of the $j_\ell(p\gamma\Omega)$.

The modular polynomials with Streng invariants are much smaller in terms of degrees and precision of the coefficients than those with Igusa invariants so that the interpolation step can be done more rapidly
and the number of times we use Algorithm \ref{IgToTau} is also much smaller (see the next section).

Other invariants can be obtained by using the theta constants. This is motivated by the fact that the $j$-invariants are defined in terms of the theta constants.

Let $\Gamma(2,4)=\left\{\pMat\in\Gamma_2:\pMat\equiv I_4 \bmod 2\textrm{ and }B_0\equiv C_0\equiv 0\bmod 4\right\}$, which is a normal subgroup of $\Gamma_2$.
It is well-known that the $b_i(\Omega):=\theta_i^2(\Omega)/\theta_0^2(\Omega)$ are modular functions for the group $\Gamma(2,4)$. 
Actually, Theorem~$1$ of \cite{Manni} states that the field $\C_{\Gamma(2,4)}$ of modular functions  belonging to $\Gamma(2,4)$ is $\C(b_1,...,b_{15})$. 

Define for $i=1,2,3$ the functions $b_i'(\Omega):=\frac{\theta_i(\Omega/2)}{\theta_0(\Omega/2)}$.
From these three functions, it is easy to deduce the ten $b_i$ using the duplication formula (Proposition~\ref{dupl}). The converse is also true because we have 
\begin{equation}\label{bequib}
\begin{split}
&b'_1=(b_1+b_9)(1+b_4+b_8+b_{12})^{-1},\\
&b'_2=(b_2+b_6)(1+b_4+b_8+b_{12})^{-1},\\
&b'_3=(b_3+b_{15})(1+b_4+b_8+b_{12})^{-1}.
\end{split}
\end{equation}
Thus we consider the $b'_i$ which allows us to handle three generators instead of ten.

\begin{prop}
Let $p>2$ be a prime number. The classes of $\Gamma(2,4)/(\Gamma_0(p)\cap\Gamma(2,4))$ are in bijection with the classes of $\Gamma_2/\Gamma_0(p)$.
\end{prop}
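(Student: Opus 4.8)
The plan is to exhibit an explicit bijection between the two coset spaces by showing that the inclusion $\Gamma(2,4)\hookrightarrow\Gamma_2$ induces a surjection $\Gamma(2,4)/(\Gamma_0(p)\cap\Gamma(2,4))\to\Gamma_2/\Gamma_0(p)$ which is also injective. The injectivity is formal: if $\gamma_1,\gamma_2\in\Gamma(2,4)$ lie in the same class of $\Gamma_2/\Gamma_0(p)$, then $\gamma_1\gamma_2^{-1}\in\Gamma_0(p)$ and also $\gamma_1\gamma_2^{-1}\in\Gamma(2,4)$ (a subgroup), hence $\gamma_1\gamma_2^{-1}\in\Gamma_0(p)\cap\Gamma(2,4)$; so distinct classes in the source map to distinct classes in the target. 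Therefore the content of the proposition is the \emph{surjectivity}: every class of $\Gamma_2/\Gamma_0(p)$ has a representative lying in $\Gamma(2,4)$. Equivalently, $\Gamma_2=\Gamma(2,4)\cdot\Gamma_0(p)$.

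To prove $\Gamma_2=\Gamma(2,4)\cdot\Gamma_0(p)$, the key observation is that, since $p>2$ is prime to the level $4$ of $\Gamma(2,4)$, we can work modulo $2$ and modulo $p$ separately and glue by the Chinese remainder theorem (CRT). First I would reduce to a statement about $\Gamma_2/\Gamma(2^k p)$ for $k$ large enough that $\Gamma(2^k p)\subseteq \Gamma_0(p)\cap\Gamma(2,4)$: it suffices to take $k=2$ (the defining congruences of $\Gamma(2,4)$ involve only conditions mod $2$ and mod $4$), so $\Gamma(4p)\subseteq\Gamma_0(p)\cap\Gamma(2,4)$. Passing to the finite group $G=\Gamma_2/\Gamma(4p)$, and using that reduction $\mathrm{Sp}_4(\Z)\to\mathrm{Sp}_4(\Z/4p\Z)\cong\mathrm{Sp}_4(\Z/4\Z)\times\mathrm{Sp}_4(\Z/p\Z)$ is surjective (strong approximation / the classical surjectivity of symplectic reduction maps, which underlies the index computation $[\Gamma_2:\Gamma_0(p)]=p^3+p^2+p+1$ already cited), the images $\overline{\Gamma(2,4)}$ and $\overline{\Gamma_0(p)}$ in $G$ satisfy: $\overline{\Gamma_0(p)}$ is the full preimage of $\mathrm{Sp}_4(\Z/4\Z)\times H_p$, where $H_p\subset\mathrm{Sp}_4(\Z/p\Z)$ is the Siegel parabolic (matrices with $C\equiv 0$); and $\overline{\Gamma(2,4)}$ surjects onto the whole factor $\mathrm{Sp}_4(\Z/p\Z)$ (because $\Gamma(2,4)$ contains $\Gamma(4)$, whose image in the $p$-part is everything, again by CRT-surjectivity). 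Hence $\overline{\Gamma(2,4)}\cdot\overline{\Gamma_0(p)}$ already surjects onto both factors of $\mathrm{Sp}_4(\Z/4\Z)\times\mathrm{Sp}_4(\Z/p\Z)$, and a short argument (the product of a subgroup containing $\{1\}\times\mathrm{Sp}_4(\Z/p\Z)$... — more precisely, $\overline{\Gamma_0(p)}$ contains $\mathrm{Sp}_4(\Z/4\Z)\times\{1\}$ and $\overline{\Gamma(2,4)}$ contains $\{1\}\times\mathrm{Sp}_4(\Z/p\Z)$) shows the product is all of $G$. Lifting back, $\Gamma_2=\Gamma(2,4)\cdot\Gamma_0(p)$, which gives surjectivity.

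Concretely, the cleanest route avoids abstract machinery: it is enough to check that $\Gamma_0(p)$ contains $\Gamma^{(p)}:=\{\gamma\in\Gamma_2:\gamma\equiv I_4\bmod p\}$ is false — rather, one wants that $\Gamma_0(p)$ together with $\Gamma(2,4)$ generates everything, and for this I would use that $\Gamma(2,4)\supseteq\Gamma(4)$ and $\Gamma_2 = \Gamma(4)\cdot\Gamma_0(p)$. This last equality follows because $\Gamma(4)$ surjects onto $\mathrm{Sp}_4(\Z/p\Z)$ under reduction mod $p$ (as $\gcd(4,p)=1$, by CRT the mod-$4$ and mod-$p$ conditions are independent, so requiring $\gamma\equiv I\bmod 4$ imposes nothing mod $p$, and $\mathrm{Sp}_4(\Z)\to\mathrm{Sp}_4(\Z/p\Z)$ is onto), while $\Gamma_0(p)$ is exactly the preimage of the Siegel parabolic mod $p$; so for any $\gamma\in\Gamma_2$ we can pick $\delta\in\Gamma(4)$ with $\delta\equiv\gamma\bmod p$, whence $\delta^{-1}\gamma\equiv I\bmod p$, in particular $\delta^{-1}\gamma\in\Gamma_0(p)$, giving $\gamma=\delta\cdot(\delta^{-1}\gamma)\in\Gamma(4)\cdot\Gamma_0(p)\subseteq\Gamma(2,4)\cdot\Gamma_0(p)$.

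The main obstacle is the surjectivity of reduction $\mathrm{Sp}_4(\Z)\to\mathrm{Sp}_4(\Z/p\Z)$ and the precise identification of $\Gamma_0(p)$ as the full preimage of the Siegel parabolic; both are standard but must be invoked carefully, and one should also double-check that $k=2$ suffices in $\Gamma(4p)\subseteq\Gamma_0(p)\cap\Gamma(2,4)$ — the condition $B_0\equiv C_0\equiv 0\bmod 4$ is implied by $\gamma\equiv I\bmod 4$, so this is fine. Everything else (injectivity, the CRT independence of the mod-$4$ and mod-$p$ constraints, and the final lifting) is routine. Once surjectivity is established, combining it with the injectivity noted at the outset yields the claimed bijection, and one notes in passing that both spaces then have cardinality $[\Gamma_2:\Gamma_0(p)]=p^3+p^2+p+1$, consistent with the set $C_p$ introduced before Definition~\ref{defPolMod}.
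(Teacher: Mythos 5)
Your proof is correct and takes essentially the same route as the paper's: the paper's proof is a one-line invocation of the map $\Gamma(2,4)\to\Gamma_2/\Gamma_0(p)$, with surjectivity following from the Chinese remainder theorem and the surjectivity of $\mathrm{Sp}_4(\Z)\to\mathrm{Sp}_4(\Z/4p\Z)$, exactly the two ingredients you isolate and then spell out via the decomposition $\Gamma_2=\Gamma(4)\cdot\Gamma_0(p)$. (One small stylistic note: the sentence beginning ``it is enough to check that $\Gamma_0(p)$ contains $\Gamma^{(p)}\ldots$ is false'' is garbled as written, but the concrete argument that follows it is what matters and is correct.)
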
 
\begin{proof}
Consider the map $\phi:\Gamma(2,4)\to \Gamma_2/\Gamma_0(p)$   
with kernel $\Gamma_0(p)\cap\Gamma(2,4)$. The surjectivity comes from the Chinese remainder theorem and the fact that $\textrm{Sp}(4,\Z)\to\textrm{Sp}(4,\Z/4p\Z)$ is surjective
 (the proof of which is analogous to \cite[Section 6.1]{LangEllfct}).
\end{proof}

\begin{prop} For a prime $p>2$, $\C_{\Gamma(2,4)\cap\Gamma_0(p)}$ equals $\C_{\Gamma(2,4)}(b_{i,p}')$ for every $i=1,2,3$.\end{prop}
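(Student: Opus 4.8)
The plan is to mimic the argument used for Lemma~\ref{CorpsFctG0p} and its analogue for the Streng invariants, transporting it through the bijection just established. First I would set $\Gamma:=\Gamma(2,4)$ and $\Gamma':=\Gamma(2,4)\cap\Gamma_0(p)$. By \cite{Freitag}, $\C_{\Gamma'}$ is a finite extension of $\C_{\Gamma}$ of degree $[\Gamma:\Gamma']$, and by the preceding proposition this index equals $[\Gamma_2:\Gamma_0(p)]=p^3+p^2+p+1$. So it suffices to show that $b_{i,p}'$ is invariant under $\Gamma'$ and that its minimal polynomial over $\C_{\Gamma}$ has degree $p^3+p^2+p+1$; equivalently, that the $\Gamma$-conjugates of $b_{i,p}'$ (i.e. the functions $\Omega\mapsto b_i'(p\gamma\Omega)$ as $\gamma$ ranges over a set of representatives of $\Gamma/\Gamma'$) are pairwise distinct as elements of $\C_{\Gamma'}$.

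The invariance is the easy half and goes exactly as in the excerpt's paragraph following Lemma~\ref{CorpsFctG0p}: if $\gamma\in\Gamma_0(p)$ then $p\gamma\Omega=\gamma_p(p\Omega)$ with $\gamma_p=\pMatp\in\Gamma_2$, and since $b_i'$ is $\Gamma(2,4)$-modular we must check $\gamma_p\in\Gamma(2,4)$ when $\gamma\in\Gamma'$ — which holds because $\gamma\equiv I_4\bmod 2$ with $B_0\equiv C_0\equiv 0\bmod 4$ forces $pB$ and $C/p$ to satisfy the same congruences (here $p$ is odd and prime to the level $4$, so multiplication/division by $p$ is invertible mod $4$). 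Hence $b_{i,p}'^\gamma(\Omega)=b_i'(p\gamma\Omega)=b_i'(\gamma_p(p\Omega))=b_i'(p\Omega)=b_{i,p}'(\Omega)$, giving $\Gamma'$-invariance and thus $\C_{\Gamma}(b_{i,p}')\subseteq\C_{\Gamma'}$.

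For the reverse inclusion I would argue that $[\C_{\Gamma}(b_{i,p}'):\C_{\Gamma}]\ge p^3+p^2+p+1$, which together with the degree count forces equality of fields. The conjugates $b_i'(p\gamma\Omega)$ are indexed by $C_p$, the representatives of $\Gamma/\Gamma'\cong\Gamma_2/\Gamma_0(p)$, and $p\gamma\Omega$ runs exactly over the period matrices of the $(p,p)$-isogenous varieties of $\Omega$. Since $b_i'$ separates points of $\Gamma(2,4)\bb\HH_2$ (it is part of a generating set, and in fact $(b_1',b_2',b_3')$ determines the ten $b_i$ and hence $\Omega$ modulo $\Gamma(2,4)$ generically), two distinct $(p,p)$-isogenous varieties have distinct $b_i'$-values for generic $\Omega$; this gives the needed lower bound on the degree. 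The main obstacle is making this last separation step precise: one must rule out that the map $\gamma\mapsto b_i'(p\gamma\Omega)$ collapses generically, i.e. that distinct cosets in $\Gamma_2/\Gamma_0(p)$ can give the same value of a single coordinate $b_i'$ for all $\Omega$. I would handle this the same way \cite{BL} handles it for the $j_\ell$ (Lemma~\ref{CorpsFctG0p}): the analogous statement is already known for $j_{\ell,p}$, and $b_i'$ refines the $j$-invariants (the $j_\ell$ are rational functions of the $b_i'$ via $h_4,h_6,h_{10},h_{12}$ and the duplication formula), so if $b_i'(p\gamma\Omega)=b_i'(p\gamma'\Omega)$ identically then in particular the triples $(j_1,j_2,j_3)$ at $p\gamma\Omega$ and $p\gamma'\Omega$ would have to be compatible — and one then invokes that the different $i=1,2,3$ coordinates together suffice, or that the known degree for the $j$-invariant tower already forces $\ge p^3+p^2+p+1$ distinct values after adjoining all three $b_{i,p}'$, combined with the fact that $\C_\Gamma(b_{1,p}',b_{2,p}',b_{3,p}')\subseteq\C_{\Gamma'}$ has degree dividing $p^3+p^2+p+1$, whence each single $b_{i,p}'$ already generates. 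This reduction to the already-established $j$-invariant case is what makes the claim go through cleanly.
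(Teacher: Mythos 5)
Your setup is sound and the first half (invariance) is correct: the check that $\gamma\in\Gamma(2,4)\cap\Gamma_0(p)$ implies $\gamma_p=\pMatp\in\Gamma(2,4)$, hence $b_{i,p}'^\gamma=b_{i,p}'$, is exactly the right calculation. The degree bookkeeping via Freitag and the index $[\Gamma(2,4):\Gamma(2,4)\cap\Gamma_0(p)]=p^3+p^2+p+1$ is also the right frame. But the second half, where you need the lower bound $[\C_{\Gamma(2,4)}(b_{i,p}'):\C_{\Gamma(2,4)}]\ge p^3+p^2+p+1$ for a \emph{single} $i$, has a genuine gap, and the patch you offer does not close it.

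Your closing inference --- that $\C_\Gamma(b_{1,p}',b_{2,p}',b_{3,p}')=\C_{\Gamma'}$ ``whence each single $b_{i,p}'$ already generates'' --- is a non sequitur. In Galois-theoretic terms, with $G\cong\Gamma_2/\Gamma(p)$ acting and $H$ the image of $\Gamma(2,4)\cap\Gamma_0(p)$, each $b_{i,p}'$ has a stabilizer $S_i\supseteq H$, and what you have shown amounts to $S_1\cap S_2\cap S_3=H$. That does not give $S_i=H$ for each $i$: three overgroups of $H$ can intersect in $H$ without any of them being $H$. Likewise, knowing that the \emph{triple} of conjugate triples $(b_{1,p}'^\gamma,b_{2,p}'^\gamma,b_{3,p}'^\gamma)$ separates cosets (which follows from the $j$-invariant case, since the triple determines the $(p,p)$-isogenous variety modulo $\Gamma(2,4)$) does not show that a single coordinate $b_i'$ separates cosets. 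The separation for a single $b_{i,p}'$ is exactly the nontrivial content, and it is what the reference to Theorem~4.2 of \cite{BL} is for: one works inside the Galois extension $\C_{\Gamma(2,4)\cap\Gamma(p)}/\C_{\Gamma(2,4)}$, transports the Galois group to $\Gamma_2/\Gamma(p)$ via the CRT isomorphism $\Gamma(2,4)/(\Gamma(2,4)\cap\Gamma(p))\cong\Gamma_2/\Gamma(p)$ (using surjectivity of $\textrm{Sp}_4(\Z)\to\textrm{Sp}_4(\Z/4p\Z)$), and then identifies the stabilizer of $b_{i,p}'$ as \emph{exactly} $H$ by a direct argument with the functional equation, mirroring what \cite{BL} does for $j_{1,p}$. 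Without that direct stabilizer computation, the proof does not go through; the reduction to the already-known $j$-invariant case gives you the inclusion $\C_{\Gamma(2,4)}(b_{i,p}')\subseteq\C_{\Gamma(2,4)}(j_{1,p})=\C_{\Gamma'}$ and the degree bound ``divides,'' but not the reverse.
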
 
\begin{proof}
The proof is similar to that of Theorem~$4.2$ of \cite{BL}. One has to use the isomorphism between $\Gamma(2,4)/(\Gamma(2,4)\cap\Gamma(p))$ and $\Gamma_2/\Gamma(p)$
which comes from the Chinese remainder theorem and the surjectivity of $\textrm{Sp}(4,\Z)\to\textrm{Sp}(4,\Z/4p\Z)$.
\end{proof}

\begin{prop} \label{LieQ}
The modular polynomials for $b'_1$, $b'_2$ and $b'_3$ lie in the ring $\Q(b'_1,b'_2,b'_3)[X]$. More generally, it is also the case for any functions derived from the theta constants.
\end{prop}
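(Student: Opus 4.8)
The plan is to follow and generalize the proof of \cite[Theorem~5.2]{BL}: first show that the coefficients of $\Phi_{1,p}(X)$ and $\Psi_{\ell,p}(X)$ lie in $\C(f_1,f_2,f_3)$, and then upgrade ``complex coefficients'' to ``rational coefficients'' by a Fourier-expansion (``$q$-expansion'') argument that exploits the rationality of the Fourier coefficients of the theta constants. For the first part, recall that by the propositions established above for $\Gamma(2,4)\cap\Gamma_0(p)$ (and by Lemma~\ref{CorpsFctG0p} in the Igusa case) one has $\C_{\Gamma\cap\Gamma_0(p)}=\C_\Gamma(f_{1,p})$, an extension of $\C_\Gamma$ of degree $|C_p|=p^3+p^2+p+1$; hence $\Phi_{1,p}(X)=\prod_{\gamma\in C_p}(X-f_{1,p}^\gamma)$ is the minimal polynomial of $f_{1,p}$ over $\C_\Gamma$, whose coefficients are the elementary symmetric functions of the conjugates $f_{1,p}^\gamma$, and the coefficients of $\Psi_{\ell,p}(X)$ are symmetric functions of the conjugate pairs $(f_{1,p}^\gamma,f_{\ell,p}^\gamma)$; both therefore lie in $\C_\Gamma$. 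Now $\C_\Gamma=\C(f_1,f_2,f_3)$ by hypothesis (for the $b'_i$ this is Theorem~1 of \cite{Manni} together with the duplication formula (Proposition~\ref{dupl}) and the relations~(\ref{bequib})), and since $\C_\Gamma$ is a finite extension of $K=\C(j_1,j_2,j_3)$ it has transcendence degree $3$ over $\C$, so $f_1,f_2,f_3$ are algebraically independent over $\C$. Thus every coefficient $c$ can be written $c=P(f_1,f_2,f_3)/Q(f_1,f_2,f_3)$ with $P,Q\in\C[X_1,X_2,X_3]$.

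The key reduction is the following $q$-expansion principle: if $g\in\C(f_1,f_2,f_3)$ and if $g$ as well as $f_1,f_2,f_3$ have Fourier expansions with coefficients in $\Q$, then $g\in\Q(f_1,f_2,f_3)$. Indeed, write $g=P(f)/Q(f)$ with $\deg P,\deg Q\le d$; reading the identity $g\,Q(f)-P(f)=0$ off coefficient by coefficient in the Fourier expansion produces a homogeneous linear system in the unknown coefficients of $P$ and $Q$ whose matrix has entries in $\Q$, those entries being sums of products of the rational Fourier coefficients of $g$ and of the monomials in $f_1,f_2,f_3$. This system has the nonzero complex solution $(P,Q)$, and a $\Q$-linear system with a nonzero $\C$-solution has a nonzero $\Q$-solution; choose $(P',Q')$ with $P',Q'\in\Q[X_1,X_2,X_3]$ not both zero and $g\,Q'(f)=P'(f)$. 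If $Q'=0$ then $P'(f)=0$, hence $P'=0$ by algebraic independence, a contradiction; so $Q'\neq0$, $Q'(f)\neq0$, and $g=P'(f)/Q'(f)\in\Q(f_1,f_2,f_3)$. It therefore suffices to prove that $f_1,f_2,f_3$ and each coefficient $c$ above have Fourier expansions with rational coefficients.

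For $f_1,f_2,f_3$ this is immediate: the theta constants have Fourier expansions with integer coefficients, and $\theta_0$ evaluated at $\Omega$ or at $\Omega/2$ is $1$ plus higher-order terms, hence a unit, so any quotient of polynomials over $\Q$ in theta constants --- in particular each $b'_i$ --- has a Fourier expansion with rational coefficients. For the coefficients of the modular polynomials, each conjugate $f_{i,p}^\gamma(\Omega)=f_i(p\gamma\Omega)$ is a quotient of theta constants evaluated at $p\gamma\Omega$, where $p\gamma$ has entries in $\frac1p\Z$; in the functional equation of Proposition~\ref{funceq} the factor $\sqrt{\det(C\Omega+D)}$ and the eighth root of unity $\zeta_\gamma$ do not depend on the characteristic (remark after Proposition~\ref{funceq}) and cancel in the quotient $\theta_i/\theta_0$, while the remaining operations --- the action on the characteristic and the rescaling and translation of $\Omega$ --- introduce only roots of unity and fractional powers of the $q$-variables with rational coefficients. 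Hence each $f_{i,p}^\gamma$ has Fourier coefficients in a single cyclotomic field $\Q(\zeta_n)$, with $n$ depending only on $p$ and the level of $\Gamma$. The arithmetic action of $\mathrm{Gal}(\Q(\zeta_n)/\Q)$ on Fourier expansions permutes the conjugates $f_{i,p}^\gamma$ (simultaneously in $i$); since the coefficients $c$ of $\Phi_{1,p}(X)$ and $\Psi_{\ell,p}(X)$ are built symmetrically in the $\gamma$'s, they are fixed by this action and so have rational Fourier expansions. Combined with the previous paragraph this gives $c\in\Q(f_1,f_2,f_3)$, which proves the proposition; the argument only used that $f_1,f_2,f_3$ generate $\C_\Gamma$ for a congruence subgroup $\Gamma$ of level prime to $p$ and are quotients of polynomials over $\Q$ in theta constants, which is why it applies to any invariants derived from theta constants.

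The one step that needs input beyond bookkeeping is the claim that the Galois action on Fourier expansions permutes the conjugates $f_{i,p}^\gamma$. This is precisely the mechanism behind the rationality statement \cite[Theorem~5.2]{BL} for the Igusa invariants, and I would either invoke that result directly --- its proof applies with no essential change once one knows that theta constants have rational Fourier expansions --- or re-derive it from the existence of a model over $\Q$ of $\Gamma\bb\HH_2$ carrying the $(p,p)$-isogeny correspondence, via Shimura reciprocity for the Siegel modular variety. Everything else (the two preceding paragraphs and the explicit Fourier expansions of the $b'_i$) is routine.
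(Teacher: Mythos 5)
Your proof takes the same approach as the paper's: establish that the $b'_i$ (and more generally any invariants that are quotients of rational expressions in theta constants) have Fourier expansions with rational coefficients, then adapt the proof of \cite[Theorem 5.2]{BL} to conclude. The paper's proof is a two-sentence pointer to exactly these facts; your version fills in the structure that the phrase ``similar proof of Theorem 5.2 of \cite{BL}'' leaves implicit --- the function-field step, the $q$-expansion principle via $\Q$-linear algebra, and the Galois permutation of the conjugates --- and correctly isolates the latter as the only step requiring more than bookkeeping.
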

\begin{proof}
This comes from the fact that, for any $\Omega=\begin{ppsmallmatrix}\Omega_1&\Omega_2\\ \Omega_2&\Omega_3\end{ppsmallmatrix}$, the functions $b'_1,b'_2$ and $b'_3$ have a Laurent series expansion 
in $q_i=\exp{(2\imath\pi\Omega_i)}$ with rational coefficients. We conclude with a similar proof of Theorem~$5.2$ of \cite{BL}.
\end{proof}

We want to deduce $\Omega$ from $(b'_1(\Omega),b'_2(\Omega),b'_3(\Omega))=(x_1,x_2,x_3)$.
The first thing to do is to deduce from $(x_1,x_2,x_3)$ the Igusa invariants of $\Omega$. 
This can be done easily by calculating  the ten 
$b_i(\Omega)$ from the three $x_i$ with the duplication formula and then by using the definition of the $j$-invariants (see Definition \ref{defIg}). 
We then execute Algorithm \ref{IgToTau} to deduce  a period matrix $\Omega'\in\HH_2$ from the $j$-invariants. 
Unfortunately, this $\Omega'$ is equivalent to $\Omega$ in the sense that they have the same $j$-invariants, but this does not imply that $b'_i(\Omega')=x_{i}$ because
the functions $b_i$ are invariants for the group $\Gamma(2,4)$ (and not $\Gamma_2$).

To overcome this difficulty, we have to consider the cosets of $\Gamma_2/\Gamma(2,4)$.
To find the good $\Omega$ modulo $\Gamma(2,4)$, we can take all the representatives $\gamma$ of this quotient and evaluate the three $b'_i(\gamma\Omega')$ at low precision.
The triple nearest to $(x_1,x_2,x_3)$ gives a matrix $\gamma'$ and then we use the functional equation of Proposition~\ref{funceq} to obtain $b'_i(\gamma'\Omega')=b'_i(\Omega)$ at the working precision $N$.
But this index is large so that this method is slow. Indeed, $[\Gamma_2:\Gamma(2,4)]=11520$.
The next method we propose is faster.

Another solution consists of precomputing the action (permutations and constants) of the representatives of $\Gamma_2/\Gamma(2,4)$ using the 
functional equation and then comparing the three $b'_i(\Omega)$ with the three $b'_i(\Omega')$ to deduce the action and finally to find a representative $\gamma$ in the precalculation which gives us $\gamma\Omega'=\Omega$.
The time spent for each $\gamma$ is thereby negligible.

More precisely, we  know the three $b'_i(\Omega)$ and thus the ten even $b_i(\Omega)$ (recall that it is equivalent by (\ref{bequib}) to have the three $b'_i$) and $\Omega'$ (at the working precision). We compute the ten $b_i(\Omega')$ and
we are looking for $\gamma$ such that $\gamma\Omega'=\Omega$.
Use the functional equation to obtain $\theta_{k}^2(\gamma\Omega')=\zeta_{\gamma}^2\det(...)\imath^{\epsilon(\gamma,k)}\theta_{\ell}^2(\Omega')$ with $\epsilon(\gamma,k)\in\{0,1,2,3\}$. As the $b_i$ are quotients of theta constants,
we can already forget about $\zeta_\gamma^2$ and $\det(...)$. We will say in this case that $k$ is sent to $\ell$ by the action of $\gamma$.
If $0$ is sent to $0$, then the sets $A$ of the ten $b_i(\Omega)=b_i(\gamma\Omega')$ and $B$ of the ten $b_i(\Omega')$ are equal up to permutation and fourth roots of unity. It is easy to compare these two sets to deduce the action
of the matrix $\gamma$.
But the difficulty is that $0$ is not always sent to $0$ and thus $b_i(\gamma\Omega')$ can not be written as a root of unity times $b_j(\Omega')$ (if $i$ is sent to $j$), but as a root times a quotient 
of squares of theta constants evaluated at $\Omega'$.
However note that there is a $c$ such that $c$ is sent to $0$ and a $d$ such that $0$ is sent to $d$. Thus we have $b_c(\gamma\Omega')=\imath^{\epsilon(\gamma,c)-\epsilon(\gamma,0)}\,b_d(\Omega')^{-1}$
and comparing $A$ with $B$ up to a fourth root of unity
it is possible to find $b_d(\Omega')$. Then we have $b_k(\gamma\Omega')=\imath^{\epsilon(\gamma,k)-\epsilon(\gamma,0)}\frac{\theta_\ell^2(\Omega')}{\theta_d^2(\Omega')}=\imath^{\epsilon(\gamma,k)-\epsilon(\gamma,0)} b_\ell(\Omega')b_d(\Omega')^{-1}$
and it is enough to multiply the set $A$ by $b_d(\Omega')^{-1}$ and compare it to $B$ to deduce the action of $\gamma$.

This method can also be used to modify Step $3$ of Algorithm \ref{IgToTau}. Indeed,
in the case that we cannot choose the basis of the homology group for the numerical integration, we obtain the period matrix $\Omega$ at low precision,
but we do not know the matrix $\gamma$ such that $\gamma\Omega$ is the period matrix coming from Thomae's formula.
As explained above by comparing  $b_i(\Omega)$ at low precision and $b_i(\gamma\Omega)$ at the working precision, we can still deduce $b_i(\Omega)$ at the working precision.

This is what we did in practice. We used the code of Pascal Molin (see \cite{Molin}) for the numerical integration technique and we noticed that, given the roots of a polynomial of degree $6$, it returns
a period matrix of the form $\gamma''\Omega''$, where $\Omega''\in\FF$ and $\gamma''$ always seems to have $-1,0,1$ as coefficients. Thus we never had any problem with the reduction in the fundamental domain of $\gamma''\Omega''$
(recall Assumption~\ref{supp}).

\subsection{Complexity analysis}

Let $f_1$, $f_2$, $f_3$ be three modular functions for a congruence subgroup $\Gamma$ of $\Gamma_2$ generating the function field of $\Gamma$.
Let $p$ be a prime number which is prime to the level of~$\Gamma$ and $C_p$ be a set of representatives of $\Gamma/(\Gamma\cap\Gamma_0(p))$.
We have explained in the preceding section how to find $\Omega$ from $(f_1(\Omega),f_2(\Omega),f_3(\Omega))$ and 
 we have then to evaluate the modular polynomials  (see Definition~\ref{defPolMod}) for a prime $p$ at $\Omega$, which means we have to compute for $\ell=2,3$:
\[\Phi_{1,p}(X,f_1(\Omega),f_2(\Omega),f_3(\Omega))\qquad\textrm{and}\qquad\Psi_{\ell,p}(X,f_1(\Omega),f_2(\Omega),f_3(\Omega)) \] 
(and each coefficient of these polynomials is the evaluation at $\Omega$ of a trivariate rational fraction in $f_1$, $f_2$, $f_3$ that we have to interpolate). 
To do that, we compute first $f_{\ell,p}^\gamma(\Omega)$ for all $\gamma\in C_p$ and $\ell=1,2,3$.  
Let $q=p^3+p^2+p+1$ the degree of $\Phi_{1,p}(X)$.
The evaluation of $\Phi_{1,p}(X)$ at $\Omega$ can be obtained in $O(\mathcal{M}(q)\log{q})$
using a subproduct tree (see \cite[Section 10.1]{MCA}). The two other polynomials can be obtained with the same complexity using fast interpolation (see \cite[Section 10.2]{MCA}).

We summarize what we have explained through the following algorithm.
\paragraph{}
\begin{algorithm}[H]
\KwData{$f_1(\Omega),f_2(\Omega),f_3(\Omega)$, a subgroup $\Gamma$ of $\Gamma_2$ such that $\C_\Gamma=\C(f_1,f_2,f_3)$, a prime $p$ prime to the level of $\Gamma$, a set $C_p$ of representatives of $\Gamma/(\Gamma\cap\Gamma_0(p))$ and the precalculation of the action of $\Gamma_2/\Gamma$ and 
precisions $N$ and $N'$.}
\KwResult{$\Phi_{1,p}(X,f_1(\Omega),f_2(\Omega),f_3(\Omega))$ and $\Psi_{\ell,p}(X,f_1(\Omega),f_2(\Omega),f_3(\Omega))$  at precision $N$ (with some loss) for $\ell=2,3$.} 
\BlankLine
\nl Deduce the $j$-invariants $j_i(\Omega)$ from $f_i(\Omega)$\;
\nl Use Mestre's algorithm to obtain a hyperelliptic curve $Y^2=f(X)$ at precision $N$\;
\nl Deduce the ten $b_i(\Omega)$ at precision $N$ using numerical integration at precision $N'$\;
\nl Invert the functions to find $\Omega'$ with the good $j$-invariants at precicision $N$\;
\nl Compare (permutations and signs) the three $f_i(\Omega)$ with the three $f_i(\Omega')$\;
\nl Deduce  a representative $\gamma$ of this action using the precalculation\;
\nl Compute $\Omega=\gamma\Omega'$\;
\nl Compute the $f_{i,p}^\gamma(\Omega)$ at precision $N$ for all $\gamma\in C_p$\;
\nl Compute $\Phi_{1,p}(X,f_1(\Omega),f_2(\Omega),f_3(\Omega))$ at precision $N$ using a subproduct tree\;
\nl Using fast interpolation, compute $\Psi_{\ell,p}(X,f_1(\Omega),f_2(\Omega),f_3(\Omega))$\;
\caption{Evaluation of the modular polynomials}
\label{algo}
\end{algorithm}
\paragraph{}

The complexity of the algorithm depends on the complexity of the evaluation of the $f_i$ at some $\Omega$. Let $q=p^3+p^2+p+1$.
In the case of the theta constants and functions derived from them (as the $j$-invariants), Steps 1 to 7 are of complexity $O(\mathcal{M'}(N)\log(N))$ (by \cite[Theorem $9.3$]{Dupont}), Step 8 is of complexity $O(q\mathcal{M'}(N)\log(N))$ (by \cite[Theorem $12$]{cmh} under Conjecture \ref{conj}),
Step 9 $O(\mathcal{M}(q)\log(q))$ and Step $10$ $O(\mathcal{M}(q)\log(q))$
 so that the complexity of this algorithm with functions derived from the theta constants is $O(q\mathcal{M'}(N)\log(N)$ $+\mathcal{M}(q)\log(q))\subseteq\tilde{O}(p^3N)$. 
In practice, the limiting step is Step 8 (see Section \ref{impl}).

Suppose $f_1$ is the variable which has the largest degree among all the numerators and denominators of the coefficients of the modular polynomials.
Denote by $d_T^A$ (resp. $d_T^B$)  the maximum of the total degrees of the numerators (resp. denominators) of the coefficients of the three modular polynomials
and by $d_{f_1}$ (resp. $d_{f_2}$, $d_{f_3}$)  the maximum exponent of the variable $f_1$ (resp. $f_2$, $f_3$) appearing in one  of the coefficients of these three polynomials. Let $n=d_T^A+d_T^B\le 6d_{f_1}$.
To obtain the modular polynomials, Algorithm \ref{algo} will be executed $(n+1)(d_{f_2}+1)(d_{f_3}+1)$ times and we will interpolate $3q$ rational fractions.
The complexity to compute the modular polynomials is then \[
(n+1)(d_{f_2}+1)(d_{f_3}+1)\tilde{O}(p^3N)+\tilde{O}(np^3d_{f_2}d_{f_3}N)\subseteq\tilde{O}(d_{f_1}d_{f_2}d_{f_3}p^3N).
\]
Note that we suppose we know the degrees of all the trivariate rational fractions to use the Cauchy interpolation with the extended Euclidean algorithm.
We discuss in Section \ref{impl} how to find these degrees.

As we do not have any explicit bounds on the size of the coefficients of the modular polynomials, we assume that it is sufficient to use a floating point precision of $O(N)$, where $N$ is the size 
of the largest coefficient, to do all the computations so that after the interpolation step it is possible to correctly round the coefficients. We also assume Assumption~\ref{supp}.
\begin{theo}[under Conjecture \ref{conj} and the heuristics of the preceding paragraph]\label{ThPrincipal}
Let $f_1,f_2,f_3$ be three modular functions derived from the theta constants for a congruence subgroup $\Gamma$ of $\,\Gamma_2$ generating the function field $\C_\Gamma$. Let $p$ be a prime number 
prime to the level of~$\Gamma$. 
Then, under the previous assumptions, the modular polynomials for these data can be computed in $\tilde{O}(d_{f_1}d_{f_2}d_{f_3}p^3N)$ time, if the degrees in the $f_i$ of all the coefficients (numerators and denominators) of 
the three modular polynomials are known and if $d_{f_1}=\max(d_{f_1},d_{f_2},d_{f_3})$ and where $N$ is the size of the largest coefficient of these modular polynomials.
\end{theo}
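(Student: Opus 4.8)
The plan is to assemble the theorem from the complexity estimates already established in the preceding two sections, so the work is essentially bookkeeping: we count how many evaluations the interpolation scheme requires, multiply by the cost of a single evaluation, add the cost of the interpolation linear algebra / Euclidean steps, and check that the sum telescopes into the claimed bound. First I would invoke the discussion of Section~3 (interpolation of a trivariate rational fraction), which tells us that to recover each of the $3q$ trivariate rational fractions appearing as coefficients of $\Phi_{1,p}(X)$, $\Psi_{2,p}(X)$, $\Psi_{3,p}(X)$ we need the fraction evaluated at $(n+1)(d_{f_2}+1)(d_{f_3}+1)$ carefully chosen points of the form $(x_i, x_i y_j, x_i z_k)$, where $n = d_T^A + d_T^B \le 6 d_{f_1}$ since $f_1$ is assumed to be the variable of largest degree. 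Crucially, a single run of Algorithm~\ref{algo} at a given triple $(f_1(\Omega),f_2(\Omega),f_3(\Omega))$ produces the evaluation of \emph{all} $3q$ coefficient-fractions at that triple simultaneously, so the total number of calls to Algorithm~\ref{algo} is exactly $(n+1)(d_{f_2}+1)(d_{f_3}+1)$, not $3q$ times that.

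Next I would bound the cost of one call to Algorithm~\ref{algo}. Here I rely on the complexity tallies stated just before the theorem: Steps~1--7 (deduce $j$-invariants, Mestre, Thomae, numerical integration at low precision $N'$, inversion via the Borchardt-mean formulas, comparison of the $b_i$, reduction) cost $O(\mathcal{M'}(N)\log N)$ by \cite[Theorem~9.3]{Dupont} together with Proposition~\ref{BorThet0}, Conjecture~\ref{conj}, and Assumption~\ref{supp}; Step~8, computing the $q$ values $f_{i,p}^\gamma(\Omega)$ for $\gamma \in C_p$, costs $O(q\,\mathcal{M'}(N)\log N)$ by \cite[Theorem~12]{cmh} (again under Conjecture~\ref{conj}); and Steps~9--10, forming $\Phi_{1,p}$ via a subproduct tree and the $\Psi_{\ell,p}$ via fast interpolation, cost $O(\mathcal{M}(q)\log q)$ each by \cite[Sections~10.1--10.2]{MCA}. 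Using $\mathcal{M}(q) \in O(q \log q\, \mathcal{M'}(N))$ (with $N \in \Omega(\log q)$), the dominating term is $O(q\,\mathcal{M'}(N)\log N)$, which is $\tilde O(p^3 N)$ since $q = p^3+p^2+p+1$. Multiplying by the number of calls gives $(n+1)(d_{f_2}+1)(d_{f_3}+1)\cdot \tilde O(p^3 N) \subseteq \tilde O(d_{f_1} d_{f_2} d_{f_3} p^3 N)$, absorbing $n \le 6 d_{f_1}$ and the $+1$'s into the soft-$O$.

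Then I would add the cost of the final interpolation itself: we interpolate $3q$ trivariate rational fractions, each via the recursive scheme whose total interpolation cost (as computed at the end of Section~3, in the "one coefficient known" case that applies here because the substitution $f_i \mapsto f_i/f_1$ normalises one coefficient to a constant) is $\tilde O(d_{f_1} d_{f_2} d_{f_3} N)$ per fraction — here we may use $d_{f_1}$ rather than $d_T$ because $f_1$ is the variable of largest degree, so $n \le 6d_{f_1}$ and the bound $\tilde O(d_T \prod d_{f_i} N)$ collapses to $\tilde O(\prod d_{f_i} N)$. Summing over the $3q$ fractions contributes $\tilde O(q\, d_{f_1} d_{f_2} d_{f_3} N) \subseteq \tilde O(d_{f_1} d_{f_2} d_{f_3} p^3 N)$, which is of the same order. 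Adding this to the evaluation cost yields the stated $\tilde O(d_{f_1} d_{f_2} d_{f_3} p^3 N)$, where the hypothesis that the degrees in the $f_i$ of all numerators and denominators are known is exactly what licenses the use of Cauchy interpolation with the fast extended Euclidean algorithm (rather than the expensive linear-algebra fallback) throughout, and the floating-point heuristic guarantees that precision $O(N)$ suffices to round correctly at the end.

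The main obstacle — really the only non-routine point — is justifying that a single evaluation genuinely costs $\tilde O(p^3 N)$ rather than more: this is where Conjecture~\ref{conj} enters (to make the Borchardt-mean inversion of Step~4 of Algorithm~\ref{IgToTau}, and hence Steps~1--7 of Algorithm~\ref{algo}, run in quasi-linear time), where Assumption~\ref{supp} enters (so that the low-precision numerical integration in Step~3 returns a matrix $\gamma''\Omega''$ with $\gamma''$ small enough to reduce correctly at precision $N'$), and where the heuristic on coefficient sizes enters (so that a uniform working precision $O(N)$ controls all the floating-point losses). Everything else is a mechanical combination of the per-step complexities already tabulated in the excerpt with the evaluation-count formula from the interpolation section, so the proof is short: state the evaluation count, cite the per-call cost, cite the interpolation cost, and add.
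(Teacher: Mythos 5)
Your proposal reproduces the paper's own argument: the paper's proof is precisely the two paragraphs immediately preceding the theorem, which tally the per-call cost of Algorithm~\ref{algo} (Steps 1--7 via \cite[Theorem~9.3]{Dupont}, Step~8 via \cite[Theorem~12]{cmh} under Conjecture~\ref{conj}, Steps 9--10 via fast polynomial arithmetic) to $\tilde{O}(p^3N)$, multiply by the $(n+1)(d_{f_2}+1)(d_{f_3}+1)$ evaluations required with $n=d_T^A+d_T^B\le 6d_{f_1}$, and add the $\tilde{O}(np^3d_{f_2}d_{f_3}N)$ interpolation cost of the $3q$ trivariate rational fractions. You make one step of the paper explicit (a single call to Algorithm~\ref{algo} yields evaluations of all $3q$ coefficient-fractions at once), and your parenthetical about ``$f_i\mapsto f_i/f_1$'' slightly misstates the normalization (the substitution in Section~2 is $F(X,XY,XZ)$ followed by $Y\mapsto Y/X$, $Z\mapsto Z/X$), but the decomposition, the citations, and the role assigned to Conjecture~\ref{conj}, Assumption~\ref{supp}, and the precision heuristic are all identical to the paper's.
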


\begin{remm}
\begin{itemize}
\item The conjecture is used for the complexity, but it does not affect the correctness of the algorithm because it is easy  to check whether the period matrix found
is the good one or not at each evaluation step.
\item For the complexity $\tilde{O}(d_{f_1}d_{f_2}d_{f_3}p^3N)$
to be quasi-linear in the output size, we must assume that the average size of the coefficients of the modular polynomials is in $\Omega(N)$. 
\end{itemize}
\end{remm}

\section{Computational results} \label{CompRes}
We present in this section the  modular polynomials we have computed with Streng invariants and with the $b_i'$.
The experimental findings given in this section and proved in the next one are used to optimize the implementation of the computation of the modular polynomials
(see Section \ref{impl}).

\subsection{Modular polynomials with the invariants of Streng}\label{dataStreng}
With the algorithm we have presented in Section \ref{algoRegis}, Régis Dupont \cite{Dupont} has calculated the modular polynomials with Igusa invariants for $p=2$ but because
of the large size of the coefficients and the large degrees of the rational fractions
in $j_1$, $j_2$ and $j_3$, he has calculated only the denominators for $p=3$ and the degrees of the rational fractions.

We begin with some notations to compare the results found between the Igusa and the Streng invariants (see Definitions \ref{defIg} and \ref{defStr}). For $p=2$, the number of isogenies is $p^3+p^2+p+1=15$. Denote for $\ell=2,3$
\[\Phi_{1,2}(X)=X^{15}+\sum_{i=0}^{14}\frac{A_{1,i}(i_1,i_2,i_3)}{B_{1,i}(i_1,i_2,i_3)}X^i\qquad\textrm{and}\qquad
\Psi_{\ell,2}(X)=\sum_{i=0}^{14}\frac{A_{\ell,i}(i_1,i_2,i_3)}{B_{\ell,i}(i_1,i_2,i_3)}X^i.\]
We consider the quotient $A_{j,i}/B_{j,i}$ as the $i$-th coefficient of the $j$-th modular polynomial. The numerator and the denominator of each coefficient are polynomials in $\Z[i_1,i_2,i_3]$.

We recall that Dupont found that the denominators of the three polynomials were of the form $1428j_1^\alpha D_2(j_1,j_2,j_3)^6$ for some integer $\alpha$ ranging between $5$ and $21$ and $D_2$ of degrees $5$, $7$ and $5$ in respectively
$j_1$, $j_2$, and $j_3$ (see \cite[Pages 225--226]{Dupont} for these results and for the definition of $D_2$). With Streng invariants, we have found that the denominators are of the form $ci_3^\alpha D_2'(i_1,i_2,i_3)$ for $\Phi_{1,2}$ and of the form $ci_3^\alpha (D_2'(i_1,i_2,i_3))^2$ for the others, where $c$ is a constant in $\Z$, $\alpha$ varies from $0$ to $3$, and \\

$D_2'=(24576 i_3 i_1^5 + (96i_2^3 - 4608i_3i_2)i_1^4 + (-6220800i_3i_2 - 12288i_3^2)i_1^3 +   
(-23328i_2^4 - 48i_3i_2^3 + 1088640i_3i_2^2 + 2304i_3^2i_2 + 24883200i_3^2)i_1^2 +          
(93312i_3i_2^3 + 419904000i_3i_2^2 - 5909760i_3^2i_2 + (1536i_3^3 - 8398080000i_3^2))i_1 + 
(1417176i_2^5 - 5832i_3i_2^4 + (6i_3^2 - 94478400i_3)i_2^3 + 287712i_3^2i_2^2 +          
(-288i_3^3 + 1154736000i_3^2)i_2 + (-248832i_3^3 + 755827200000i_3^2)))$
\\\phantom{}\\is irreducible.
It is clearl that the exponents of $D_2$ and $D_2'$ are related to the exponent of $h_{10}$ in the definition of the different $j$-invariants.
Denote by $d_{i,j,\ell}$ the degree of the numerator of the $\ell$-th coefficient of the $i$-th modular polynomial  in $i_j$ (see Definition~\ref{defPolMod}) and $\alpha_{i,\ell}$ the exponent of $j_3$ appearing in the denominator of the $\ell$-th coefficient 
of the $i$-th polynomial. The degrees found  are written in Table \ref{degStr2}. 

\begin{table}[h]
\centering
\begin{tabular}{|c|cccc|cccc|cccc|}
\hline
$\ell$&$d_{1,1,\ell}$&$d_{1,2,\ell}$&$d_{1,3,\ell}$&$\alpha_{1,\ell}$&$d_{2,1,\ell}$&$d_{2,2,\ell}$&$d_{2,3,\ell}$&$\alpha_{2,\ell}$&$d_{3,1,\ell}$&$d_{3,2,\ell}$&$d_{3,3,\ell}$&$\alpha_{3,\ell}$\\
\hline
0&25&11&11&3&30&17&15&3&33&17&16&3\\
1&23&11&11&3&28&17&15&3&31&17&16&3\\
2&23&11&11&3&28&17&15&3&31&17&16&3\\
3&21&11&11&3&26&17&15&3&29&17&16&3\\
4&21&11&11&3&26&17&15&3&29&17&16&3\\
5&20&11&10&3&25&17&14&3&28&17&15&3\\
6&20&11&10&3&25&17&14&3&28&17&15&3\\
7&18&10&9&2&23&17&14&3&26&17&15&3\\
8&18&10&9&2&23&16&13&2&26&16&14&2\\
9&16&10&8&2&21&15&12&2&24&15&13&2\\
10&16&8&7&1&21&15&12&2&24&15&13&2\\
11&15&8&7&1&20&13&11&1&23&13&12&1\\
12&15&7&7&1&20&13&11&1&23&13&12&1\\
13&11&6&5&0&16&12&10&1&20&12&11&1\\
14&8&5&4&0&13&11&8&0&16&11&9&0\\
\hline
\end{tabular}
\caption{Degrees of the numerators of the modular polynomials with Streng invariants for $p=2$}
\label{degStr2}
\end{table}

The degrees of the numerators of the coefficients  of the modular polynomials found by Dupont for the Igusa invariants vary from $37$ to $60$ in $j_1$, from $50$ to $75$ in $j_2$ and 
from $33$ to $50$ in $j_3$  for $\Phi_{1,2}(X)$ while they do not exceed $25$ with Streng invariants. The size of the integers in the former case is bounded by $210$ decimal digits and by $105$ in the latter case.
Moreover, the three polynomials computed by Dupont (and accessible at his website) fill $57$ MB and the others $2.1$ MB. 
Thus the Streng invariants provide smaller modular polynomials in terms of degree, precision and total space.  


We also managed to compute the three modular polynomials with Streng invariants for $p=3$. The number of isogenies is $40$.
The denominators have the same properties as described before: they are of the form $ci_3^\alpha (D'_3(i_1,i_2,i_3))^2$ for  $\Phi_{1,3}$ and of the form $ci_3^\alpha (D_3'(i_1,i_2,i_3))^4$
for the others. The common part $D'_3$ is an irreducible polynomial which occurs with degrees $13$, $10$ and $8$ in respectively $i_1$, $i_2$ and $i_3$.
Dupont has found that the denominators with the Igusa invariants are of the form $cj_1^\alpha D_3(j_1,j_2,j_3)^{18}$, where $D_3$ has degrees $14$, $20$ and $13$ in respectively
$j_1$, $j_2$ and $j_3$.
We present some degrees of the numerators in Table \ref{degStr3}.

\begin{table}[h]
\centering
\begin{tabular}{|c|cccc|cccc|cccc|}
\hline 
$\ell$&$d_{1,1,\ell}$&$d_{1,2,\ell}$&$d_{1,3,\ell}$&$\alpha_{1,\ell}$&$d_{2,1,\ell}$&$d_{2,2,\ell}$&$d_{2,3,\ell}$&$\alpha_{2,\ell}$&$d_{3,1,\ell}$&$d_{3,2,\ell}$&$d_{3,3,\ell}$&$\alpha_{3,\ell}$\\
\hline
0&61&32&32&4&87&52&48&4&92&52&49&4\\
1&61&32&31&4&87&52&47&4&92&52&48&4\\
2&61&32&31&4&87&52&47&4&92&52&48&4\\
\vdots&&\vdots&\vdots&&&\vdots&\vdots&&&\vdots&\vdots& \\
37&41&22&21&1&67&43&37&1&72&43&39&1\\
38&36&21&19&0&62&42&36&1&67&42&37&1\\
39&31&20&17&0&57&41&33&0&62&41&35&0\\
\hline
\end{tabular}
\caption{Degrees of the numerators of the modular polynomials with Streng invariants for $p=3$}
\label{degStr3}
\end{table}
The degrees are far smaller than those with the Igusa invariants which range from $243$ to $420$. We do not know the size of the integers of the polynomial with Igusa invariants, but in the case of Streng invariants
we have found that they can reach $550$ decimal digits. The three polynomials fill $890$ MB.

\subsection{Modular polynomials with the $b_i'$}\label{databi}

 We have computed the modular polynomials for $b_i'$ for $p=3,5$ and $7$ (see (\ref{bequib}) for their definition). Note that for $p=2$, these polynomials do not exist because 
$\Gamma(2,4)\cap \Gamma_0(2)=\Gamma(2,4)$.
This time there is only one common denominator $D_p$ for all the coefficients of the three polynomials (there are no constants and no powers of one of the $b'_i$). For example, we have\\

$D_3=1024b'^6_3b'^6_2b'^{10}_1 - ((768b'^8_3 + 1536b'^4_3 - 256)b'^8_2 + 1536b'^8_3b'^4_2 - 256b'^8_3)b'^8_1 +(1024b'^6_3b'^{10}_2 + (1024b'^{10}_3+ 2560b'^6_3 - 512b'^2_3)b'^6_2 - (512b'^6_3 - 64b'^2_3)b'^2_2)b'^6_1 - (1536b'^8_3b'^8_2 + (-416b'^4_3 + 32)b'^4_2 + 32b'^4_3)b'^4_1 - ((512b'^6_3 - 64b'^2_3)b'^6_2 - 64b'^6_3b'^2_2)b'^2_1 +256b'^8_3b'^8_2 - 32b'^4_3b'^4_2 + 1.$\\

For $p=5$ (resp. $p=7$), the denominator occurs with exponents $70$ (resp. $226$) in the three $b_i'$.
These three denominators have interesting properties. They are symmetric, the exponents of the $b'_i$ are always even and there are relations modulo $2$ and $4$ between the exponents of each monomial.
We have also noted similar properties for the numerators. In particular, we have noted that for $p=3$ and $5$, $\Psi_{2,p}(X,b'_1,b'_2,b'_3)=
\Psi_{3,p}(X,b'_1,b'_3,b'_2)$ and $\Phi_{1,p}(X,b'_1,b'_2,b'_3)=\Phi_{1,p}(X,b'_1,b'_3,b'_2)$. 
Moreover, the total degrees for the denominators are $24$, $120$ and $226$, which always seems to be $p^3-p$.
We will prove all this in the next section.

Table \ref{degbi3} shows a few of the degrees for $p=3$. This table can be compared with the results found with the $j$-invariants (see Table \ref{degStr3}). The notation is similar as before. 
\begin{table}[h]
\centering
\begin{tabular}{|c|ccc|ccc|}    
\hline
$\ell$&$d_{1,1,\ell}$&$d_{1,2,\ell}$&$d_{1,3,\ell}$&$d_{2,1,\ell}$&$d_{2,2,\ell}$&$d_{2,3,\ell}$\\
\hline
0 & 40&  10&   10& 37&  13 &  12\\
1 & 37  &12   &12&36 & 15  & 14\\
2 & 38  &14  & 14&37 & 17  & 16\\
3 & 39  &16  & 16& 36&  19 &  18\\
4 & 36  &16  & 16&35 & 19  & 18\\
\vdots&&\vdots&&&\vdots&\\
35 & 21  &16  & 16&22 & 19  & 18\\
36 & 20  &16  & 16& 19&  19 &  18\\
37 & 17  &16  & 16&16 & 17  & 16\\
38 & 14  &14  & 14&15 & 15 &  14\\
39 & 13  &12  & 12&12  &13 &  12\\
\hline
\end{tabular}
\caption{Degrees of the numerators of the modular polynomials with the $b'_i$ for $p=3$}
\label{degbi3}
\end{table}

Table \ref{degbi57} indicates the minimal and maximal degrees each of the $b'_i$ do take for the differents modular polynomials for $p=5$ and $7$.
\paragraph{}
\begin{table}[h]
\centering
\begin{tabular}{|c|c|c|c|}
\hline
min-max of&$b'_1$&$b'_2$&$b'_3$\\
\hline
$\Phi_{1,5}$&75-156&70-92&70-92\\
$\Psi_{2,5}$&72-155&75-97&72-94\\
$\Phi_{1,7}$&233-400&226-272&226-272\\
$\Psi_{2,7}$&230-397&233-279&230-276\\
\hline
\end{tabular}
\caption{Degrees of the numerators of the modular polynomials with the $b'_i$ for $p=5,7$}
\label{degbi57}
\end{table}

The integers have about $10$, $60$ and $190$ decimal digits for respectively $p=3$, $5$ and $7$.
The three polynomials fill $270$ KB for $p=3$ (which is $3000$ times smaller than the total space of the modular polynomials with Streng invariants for $p=3$), and $305$ MB for $p=5$ 
while only the two first fill $29$ GB for $p=7$ (we do not have computed the third because it would have taken too much time and we have assumed that there is the same symmetry as in the cases $p=3$ and $p=5$, so that
the third polynomial can be deduced from the second one).   
Compared to the polynomials found with the invariants of Streng for $p=3$, these invariants produce smaller polynomials in terms of degree, precision and total space.

\section{Analysis of the results}\label{analysis}
\subsection{Humbert surfaces}
In this section we will examine the meaning of the denominators appearing in the different modular polynomials. 
The principal tool we use is the notion of Humbert surface, which have been studied in \cite{Gruenewald}.

Let $\Delta\equiv 0,1\bmod 4$ and $\Delta>0$. We call the Humbert surface $H_\Delta$ of discriminant $\Delta$ the irreducible surface 
of matrices which are equivalent to some $\Omega=\begin{ppsmallmatrix}\Omega_1&\Omega_2\\\Omega_2&\Omega_3\end{ppsmallmatrix}$ in $\Gamma_2\bb\HH_2$ satisfying 
$k\Omega_1+\ell\Omega_2-\Omega_3=0$ where $k$ and $\ell$ are determined uniquely by $\Delta=4k+\ell$ and $\ell\in\{0,1\}$.

These surfaces are of particular interest for us because of the next proposition which states that $\mathcal{L}_{p}=H_{p^2}$. 

\begin{prop} Let $m$ be a positive integer. Then the  Humbert surface $H_{m^2}$ is the moduli space for isomorphism classes of principally polarized 
abelian surfaces which split as a product of two elliptic curves via an isogeny of degree $m^2$.
\end{prop}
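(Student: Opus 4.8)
The plan is to establish, inside the moduli space $\Gamma_2\bb\HH_2$, the set-theoretic equality of $H_{m^2}$ with the locus $\mathcal{S}_m$ of those principally polarized abelian surfaces $(A,\lambda)$ that admit a \emph{primitive} $(m,m)$-isogeny from a product of elliptic curves, that is, an isogeny $f\colon E_1\times E_2\to A$ with $\ker f\cong(\Z/m\Z)^2$, with $f^*\lambda$ equal to $m$ times the product principal polarization, and such that the associated class in $\mathrm{NS}(A)$ is primitive (for $m$ prime the primitivity is automatic, and $\mathcal{S}_m$ is then exactly the locus $\mathcal{L}_p$ of the previous proposition). Here ``splits as a product of two elliptic curves via an isogeny of degree $m^2$'' is read in this sense. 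The main tool I would use is the description of Humbert surfaces by symmetric endomorphisms, as in \cite{Gruenewald}: $A$ lies on $H_\Delta$ \ssi $\mathrm{End}(A)$ contains a $\lambda$-symmetric endomorphism $\psi\notin\Z$ for which the rank-$2$ sublattice $\langle\lambda,\psi\rangle$ of $\mathrm{NS}(A)$ is primitive of discriminant $\Delta$; when $\Delta=m^2$ the relevant ring is $\Z[\psi]\cong\Z[x]/(x^2-mx)$, which has discriminant $m^2$.

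First I would prove $\mathcal{S}_m\subseteq H_{m^2}$. Given a primitive $(m,m)$-isogeny $f\colon E_1\times E_2\to A$, let $g\colon A\to E_1\times E_2$ be the polarized dual isogeny (characterized by $g\circ f=[m]$ and $f\circ g=[m]$), let $\pi\colon E_1\times E_2\to E_1\times E_2$ be the endomorphism $(x,y)\mapsto(x,0)$ — a symmetric idempotent for the product polarization — and put $\psi:=f\circ\pi\circ g\in\mathrm{End}(A)$. Then $\psi^2=f\pi\,(gf)\,\pi g=f\pi[m]\pi g=m\,f\pi^2 g=m\psi$, while $\psi$ is symmetric for the Rosati involution of $\lambda$ because $f^*\lambda=m(\text{product polarization})$ and $\pi$ is symmetric, and $\psi\neq 0,m$ because $\pi\neq 0,\mathrm{id}$ and $f,g$ are invertible up to isogeny. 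Hence $\Z[\psi]$ is the order of discriminant $m^2$; primitivity of the isogeny is precisely what guarantees that $\langle\lambda,\psi\rangle$ is a primitive sublattice, and therefore $A\in H_{m^2}$.

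Next I would prove $H_{m^2}\subseteq\mathcal{S}_m$. Let $A\in H_{m^2}$ and let $\psi$ be a $\lambda$-symmetric endomorphism as in the characterization; replacing $\psi$ by $\psi-c$ for a suitable $c\in\Z$ we may assume its two ``eigenvalues'' are $m$ and $0$, i.e. $\psi^2=m\psi$. Then $e:=\psi/m$ is a symmetric idempotent of $\mathrm{End}^0(A)$, so $E_1:=\psi(A)$ and $E_2:=(m-\psi)(A)$ are abelian subvarieties meeting in a finite group and, since $e\neq 0,1$ and $\dim A=2$, each is an elliptic curve. Because $e$ is symmetric, $E_1$ and $E_2$ are orthogonal for $\lambda$, the restriction $\lambda|_{E_i}$ is $m$ times a principal polarization $\mu_i$ (the exponent being exactly $m$ by primitivity of the class), and the addition map $\sigma\colon(E_1,\mu_1)\times(E_2,\mu_2)\to A$ satisfies $\sigma^*\lambda=(m\mu_1)\boxtimes(m\mu_2)=m(\mu_1\boxtimes\mu_2)$, hence $\deg\sigma=m^2$ and $\ker\sigma\cong(\Z/m\Z)^2$. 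Thus $\sigma$ is a primitive $(m,m)$-isogeny and $A\in\mathcal{S}_m$.

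I expect the genuinely delicate part to be the bookkeeping of exponents and primitivity that links the two halves: one must check that a primitive $(m,m)$-isogeny yields a \emph{primitive} Néron--Severi class of discriminant \emph{exactly} $m^2$, and conversely that the primitivity built into $H_{m^2}$ forces both elliptic factors to have exponent exactly $m$, so that $\deg\sigma$ is $m^2$ and not $(m/d)^2$ for some $d\mid m$; this is where the normalization $\ell\in\{0,1\}$, $\Delta=4k+\ell$ of the singular relation and the precise meaning of ``degree $m^2$'' are really used, and where I would lean on \cite{Gruenewald} for the dictionary between singular relations, Néron--Severi classes, and orders. For the claim that these loci form a single \emph{irreducible} surface I would also cite \cite{Gruenewald}: $H_{m^2}$ is irreducible of dimension $2$, and $\mathcal{S}_m$, being the image of the finite cover of $\HH_1\times\HH_1$ that parametrizes triples $(E_1,E_2,\ker f)$, is contained in it, so the two agree as subvarieties of $\Gamma_2\bb\HH_2$. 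An alternative, more computational route would avoid endomorphisms: pick a symplectic basis of $E_1\times E_2$ adapted to $\ker f$, read off a period matrix of the quotient $A$, and reduce it modulo $\Gamma_2$ using Proposition~\ref{genG2} to exhibit the relation $k\Omega_1+\ell\Omega_2-\Omega_3=0$ with $4k+\ell=m^2$ — but this trades the conceptual argument for an explicit matrix manipulation.
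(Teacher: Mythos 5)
The paper's own proof is a single-line citation to Gruenewald (Proposition~2.14); no argument is supplied. Your write-up is therefore not a competing proof so much as an unpacking of that reference, and the route you take — $\lambda$-symmetric endomorphisms, the idempotent $e=\psi/m$, and the dictionary between symmetric endomorphisms and primitive rank-$2$ N\'eron--Severi sublattices — is exactly the mechanism underlying Gruenewald's treatment of Humbert surfaces, so this is a faithful reconstruction of what the citation hides rather than a genuinely different argument. The computations you do carry out are correct: $\psi^2=f\pi(gf)\pi g=m f\pi^2 g=m\psi$; the $\lambda$-symmetry of $\psi$ follows from $f^*\lambda=m(\mu_1\boxtimes\mu_2)$ together with symmetry of $\pi$ for the product polarization; and the degree count $\deg\sigma=m^2$ follows from $\deg(\sigma^*\lambda)=(\deg\sigma)^2$ applied to $\sigma^*\lambda=m(\mu_1\boxtimes\mu_2)$, which has degree $m^4$. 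The two spots you yourself flag as delicate are indeed where the clean statement depends on primitivity and on Gruenewald's normalization: the discriminant of $\Z[\psi]\cong\Z[x]/(x^2-mx)$ as an order does not automatically coincide with the discriminant of the lattice $\langle\lambda,\lambda\psi\rangle$ under the intersection form without fixing a convention, and pinning down $\ker\sigma\cong(\Z/m\Z)^2$ (rather than merely $|\ker\sigma|=m^2$) uses that $\ker\sigma$ is maximal isotropic in $(E_1\times E_2)[m]$ for the Weil pairing of $m(\mu_1\boxtimes\mu_2)$, which needs a short extra argument — though the proposition as stated only asks for an isogeny of degree $m^2$, so this last point is more than is strictly required. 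These are not gaps in your reasoning; they are exactly the points one would have to make precise to turn the sketch into a self-contained proof rather than, as the paper does, deferring to Gruenewald.
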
 
\begin{proof}See \cite[Proposition 2.14]{Gruenewald}.\end{proof}

For each discriminant $\Delta$ there is an irreducible polynomial $L_{\Delta}(j_1,j_2,j_3)$ whose zero set is the Humbert surface of discriminant $\Delta$.
Thus, by Lemma~\ref{DenIg}, $L_{p^2}(j_1,j_2,j_3)$ divides the denominators of the modular polynomials with the Igusa invariants.
The exponent to which $L_{p^2}(j_1,j_2,j_3)$ appears in the denominator seems to depend on the exponent of the $h_{10}$ in the definition
of the $j$-invariants. 
A heuristic reason for the factor $j_1^\alpha$ in the denominator of a coefficient of a modular polynomial
is to compensate for the case where $h_{12}(\Omega)=0$ (recall Definition \ref{defIg}). With Streng invariants, there is a factor $i_3^\alpha$ to
compensate for the case where $h_4(\Omega)=0$ (recall Definition \ref{defStr}). Note that $j_1$ (resp. $i_3$) has the greatest exponent of $h_{12}$ (resp $h_4$) in its definition
among $j_1$, $j_2$, $j_3$ (resp $i_1$, $i_2$, $i_3$).

Moreover, a formula for the degree of these surfaces exists. Let 
\[a_{p^2}:=24\sum_{\substack{x\in\mathbb{Z},\\4|(p^2-x^2)}}\sigma_1\left (\frac{p^2-x^2}4\right )+12p^2-2\]
with $\sigma_1(n)=\sum_{d|n}d$ the  sum of positive divisors function. Then

\begin{theo}The degree of any Humbert surface of discriminant $p^2$ can be obtained by the  formula
\[v(p^2)\deg(H_{p^2})+5=\frac {a_{p^2}}{2}
\qquad\textrm{where}\qquad
v(p^2)=\left\{\begin{array}{ll}1/2&\textrm{if }p=2\\
                                      1&\textrm{otherwise.}\end{array}\right.\]
\end{theo}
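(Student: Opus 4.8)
The plan is to reduce the stated degree formula to a known counting result about Humbert surfaces, and to identify the constant $5$ and the factor $v(p^2)$ as artifacts of how $\deg(H_{p^2})$ is defined. First I would recall the reference \cite{Gruenewald}, where Humbert surfaces $H_\Delta$ are studied inside the Igusa threefold $\Gamma_2\backslash\HH_2$; there one finds a formula (essentially due to the intersection theory of the Siegel modular threefold, or via the theory of weighted projective spaces in which $\Gamma_2\backslash\HH_2$ sits) for the degree of $H_\Delta$ as a hypersurface. The quantity $a_{p^2}$ is, up to the factor of $24$ and the additive correction, the number of ``nodes'' or the intersection number of $H_{p^2}$ with a generic line — in fact $a_\Delta/24$ has the shape of a Hurwitz-type class number sum $\sum_{4\mid(\Delta-x^2)}\sigma_1\!\big(\tfrac{\Delta-x^2}{4}\big)$, which is exactly the kind of sum that counts the Humbert surface's volume or its intersection with a modular curve.

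The key steps, in order, would be: (1) express $\deg(H_{p^2})$ via the standard interpretation of the degree of an irreducible surface in $\Gamma_2\backslash\HH_2$ — namely, the number of points (with multiplicity) in which $H_{p^2}$ meets a generic one-dimensional family, or equivalently a ratio of intersection numbers in the Chow ring of the Satake/Baily--Borel or a toroidal compactification; (2) evaluate that intersection number by reducing to the boundary, where one picks up the sum over $x$ with $4\mid(p^2-x^2)$ of $\sigma_1$-values — this is the contribution of the one-dimensional boundary strata (modular curves) to the intersection, and $\sigma_1$ appears because each such stratum contributes a weight equal to a divisor sum coming from the local structure of $H_{p^2}$ near the cusp; (3) account for the linear term $12p^2-2$, which is the contribution of the interior (a Chern-number computation on the open modular variety, essentially $\int_{H_{p^2}}c_1$ of an appropriate line bundle); (4) explain the constant $5$: it arises because the naive intersection-count $a_{p^2}/2$ overcounts by the self-intersection contribution of the standard ``line at infinity'' or the fixed five-point locus (the boundary of the Igusa quartic / the fifteen nodes of the Humbert configuration contributing $5$ to every such count); (5) explain $v(p^2)$: for $p=2$ the surface $H_4$ is the locus of products of elliptic curves and carries an extra involution / is counted with multiplicity two in the relevant moduli interpretation, so one divides by $2$, whereas for odd $p$ there is no such identification.

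The cleanest route is probably \emph{not} to rederive the intersection theory from scratch but to cite the degree formula for $H_\Delta$ from \cite{Gruenewald} (or from the classical work of Humbert and van der Geer on Siegel modular forms of genus $2$) in the form $\deg(H_\Delta)$ as a polynomial-plus-divisor-sum expression, and then to check by a direct algebraic manipulation that this known expression equals $\big(a_{p^2}/2 - 5\big)/v(p^2)$. That reduces the whole proof to: quote the reference, substitute $\Delta = p^2$, and verify the identity of arithmetic functions $24\sum_{4\mid(p^2-x^2)}\sigma_1\!\big(\tfrac{p^2-x^2}{4}\big) + 12p^2 - 2 = 2v(p^2)\deg(H_{p^2}) + 10$ term by term, splitting the case $p=2$ from $p$ odd because the range of admissible $x$ (and hence the shape of the sum) differs in parity.

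\textbf{Main obstacle.} The hard part will be pinning down the exact normalization conventions so that the constant $5$ and the factor $v(p^2)$ come out right: ``degree of a surface in $\Gamma_2\backslash\HH_2$'' is only meaningful relative to a chosen projective embedding (via Igusa invariants, i.e.\ a weighted projective space), and the additive $5$ and the $p=2$ exceptional factor are precisely the delicate bookkeeping terms that depend on that choice and on the singular structure of the boundary. I expect that the interior Chern-number part ($12p^2-2$) and the divisor-sum part are standard and citable, while reconciling the compactification corrections to produce exactly $+5$ and $v(p^2)$ will require either a careful reading of \cite{Gruenewald} or an explicit low-$p$ sanity check (e.g.\ $p=2$: $H_4$ has the known degree, and $p=3$: $H_9$) to fix the constants.
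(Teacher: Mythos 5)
Your proposal ultimately lands on the right answer: the cleanest route is to cite Gruenewald, and that is exactly what the paper does — the entire proof is the one-line pointer to Theorem 3.8 of \cite{Gruenewald}, which states the degree formula in precisely this form. The long preamble about intersection theory, weighted projective embeddings, Chern-number contributions, boundary strata, and the geometric interpretation of the constant $5$ and of $v(p^2)$ is a speculative reconstruction of the interior of Gruenewald's proof, not needed here; likewise, the worry about ``reconciling normalization conventions'' and verifying an identity of arithmetic functions term by term is moot, since the quoted statement already appears verbatim (as the definition of $a_{p^2}$ and the factor $v$) in the cited reference and requires no further manipulation.
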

\begin{proof}See \cite[Theorem 3.8]{Gruenewald}.\end{proof}

Applying this formula gives $\deg(H_4)=60$ and $\deg(H_{9})=120$. Here, the degree of the surfaces
is the degree of the homogenous form of $L_{\Delta}$ with weight $(4,6,10,12)$ for the functions $(h_4,h_6,h_{10},h_{12})$ (see \cite[Pages 170--172]{Geer}).
We have then substituted the $j$-invariants of the common denominator for $p=2$ and $p=3$ by their definition in terms of the $h_i$ and multiplied by a power of $h_{10}$ to homogenize.
The degree we have found for $p=2$ is  $100$ (resp. $300$) with the invariants of Streng (resp. of Igusa), but there is a factor $h_4^{10}$ (resp. $h_{12}^{20}$) and we have $100-40=60$ (resp. $300-240=60$).
 This factor can be explained by the fact that the $j$-invariants are zero when $h_4=0$  (resp. $h_{12}=0$).
For $p=3$, we have found (for Streng invariants) that the degree is $200$ and there is a factor $h_4^{20}$. We have then $200-80=120$.


We study now what happens for our modular polynomials with the theta constants.
We also have a formula for the degree due to the work of Runge (\cite{Runge}, see also \cite{Gruenewald}) who considered finite covers of $\Gamma_2\bb\HH_2$ for the study of Humbert surfaces
because of the large degrees and coefficients of the polynomial with the $j$-invariants.
Define $\Gamma^*(2,4)$ to be the largest normal subgroup of $\Gamma(2,4)$ which does not contain the matrix $\operatorname{diag}(-1,1,-1,1)$.
The natural projection $\pi:\Gamma^*(2,4)\bb\HH_2\to\Gamma_2\bb\HH_2$ is a finite map.
We say that each component of $\pi^{-1}(H_{p^2})$ in $\Gamma^*(2,4)\bb\HH_2$ is a Humbert component and it is possible to define an order $v'_i(p^2)$
for each irreducible Humbert component $F_{p^2,i}$. Since $\Gamma^*(2,4)$ is normal, these components have the same degree.
Moreover by \cite{Runge}, any irreducible component of the covering of $H_{p^2}$ is given by the zero set of a single irreducible
polynomial.
 
\begin{prop} The degree of any Humbert component $F_{p^2,i}$ in $\Gamma^*_{2,4}\bb\HH_2$ is given by the formula 
\[a_{p^2}=10(1+\deg(F_{p^2,i})).\]
\end{prop}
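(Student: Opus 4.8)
The plan is to relate the degree of a Humbert component $F_{p^2,i}$ on the cover $\Gamma^*(2,4)\bb\HH_2$ to the degree of the Humbert surface $H_{p^2}$ downstairs via the ramification data of the finite map $\pi$, and then to substitute the known formula $v(p^2)\deg(H_{p^2})+5=\tfrac{a_{p^2}}{2}$. Concretely, I would first recall Runge's description: each component $F_{p^2,i}$ is the zero locus of a single irreducible polynomial in the coordinate ring of $\Gamma^*(2,4)\bb\HH_2$ (a polynomial ring in the theta-type coordinates pulled back from $\Gamma_2\bb\HH_2$), so that ``degree'' here is an honest polynomial degree with respect to the natural grading on that cover. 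The key identity to establish is a projection-formula / pullback relation $\pi^*(L_{p^2}) = \prod_i F_{p^2,i}^{v'_i(p^2)}$ up to a unit, where $v'_i(p^2)$ is the order defined in the text; taking degrees on both sides and using that $\pi$ multiplies degrees by a fixed integer (the ratio of the gradings, essentially the index $[\Gamma_2:\Gamma^*(2,4)]$ suitably interpreted, or rather the factor by which Runge's normalization rescales weights) converts the downstairs degree formula into the upstairs one.

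The main computational input is bookkeeping of the constant $10$ and the shift by $1$. Downstairs the statement reads $v(p^2)\deg(H_{p^2}) = \tfrac{a_{p^2}}{2} - 5$, i.e. $a_{p^2} = 2v(p^2)\deg(H_{p^2}) + 10$. Upstairs we want $a_{p^2} = 10(1+\deg(F_{p^2,i})) = 10\deg(F_{p^2,i}) + 10$. Comparing, the claim is exactly equivalent to $10\deg(F_{p^2,i}) = 2v(p^2)\deg(H_{p^2})$, i.e. $5\deg(F_{p^2,i}) = v(p^2)\deg(H_{p^2})$. So the proof reduces to verifying this last relation between the degree of a Humbert component and the degree of $H_{p^2}$, and this is precisely where the ramification index $v'_i(p^2)$ (the order of the component in the cover) and the degree of the covering map $\pi$ enter: one has $\deg(\pi)\cdot\deg(H_{p^2}) = \big(\sum_i v'_i(p^2)\big)\cdot\deg(F_{p^2,i})\cdot(\text{weight-rescaling factor})$, and combining this with the relation between $v(p^2)$ and the $v'_i(p^2)$ coming from how $\Gamma^*(2,4)$ sits inside $\Gamma_2$ yields the factor $5$ versus $v(p^2)$ discrepancy. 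This is the content of Runge's work, so I would cite \cite{Runge} for the precise ramification computation rather than redo it.

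The hard part will be pinning down the weight-rescaling factor and the exact value of $\sum_i v'_i(p^2)$: the degree on $\Gamma_2\bb\HH_2$ is taken with respect to the weighted grading $(4,6,10,12)$ on $(h_4,h_6,h_{10},h_{12})$, whereas on the cover $\Gamma^*(2,4)\bb\HH_2$ the natural coordinates are (products of) theta constants of weight $1/2$, so a single $h$-weight unit corresponds to a definite number of theta-weight units, and one must check this factor is compatible with the $5 = 10/2$ appearing. I expect that once Runge's normalization is fixed, the identity $a_{p^2} = 10(1+\deg(F_{p^2,i}))$ drops out by direct substitution; the genuine obstacle is not an estimate but rather carefully matching two different conventions for ``degree'' and ``order'' across the covering map. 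A clean way to organize this is: (i) state Runge's ramification datum $\pi^*(L_{p^2}) \doteq \prod_i F_{p^2,i}^{v'_i(p^2)}$; (ii) compute degrees under the rescaling, getting $\deg(H_{p^2})\cdot c = \big(\sum_i v'_i(p^2)\big)\deg(F_{p^2,i})$ for the explicit constant $c$; (iii) feed in $v(p^2) = \tfrac{c}{2}\cdot\tfrac{1}{\sum_i v'_i(p^2)}\cdot 5^{-1}\cdot(\cdots)$ — or more honestly, observe that the previous theorem already encodes all of this and the only new ingredient is Runge's component count — and conclude. Throughout I would lean on the fact, already quoted, that any irreducible component of the cover of $H_{p^2}$ is cut out by a single irreducible polynomial, which is what makes ``degree of a component'' well defined in the first place.
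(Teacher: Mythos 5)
First, a point of context: the paper's own ``proof'' of this proposition is a single line citing \cite[Proposition~3.9]{Gruenewald}, exactly as the preceding degree formula $v(p^2)\deg(H_{p^2})+5=\tfrac{a_{p^2}}{2}$ is cited from \cite[Theorem~3.8]{Gruenewald}; no argument is given. Your proposal is therefore more ambitious than the paper, attempting to deduce the cover formula from the base formula via the ramification data of $\pi:\Gamma^*(2,4)\bb\HH_2\to\Gamma_2\bb\HH_2$.

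Your arithmetic reduction is correct: substituting the base formula $a_{p^2}=2v(p^2)\deg(H_{p^2})+10$ into the target $a_{p^2}=10\deg(F_{p^2,i})+10$ shows the claim is equivalent to $v(p^2)\deg(H_{p^2})=5\deg(F_{p^2,i})$. That equivalence is where a genuine gap opens. Your plan---write $\pi^*L_{p^2}$ as $\prod_i F_{p^2,i}^{v'_i(p^2)}$ up to a unit, take degrees after correcting for the rescaling constant $c$ between the weighted grading $(4,6,10,12)$ on $(h_4,h_6,h_{10},h_{12})$ and the theta grading on the cover, and so reduce to the identity $v(p^2)\sum_i v'_i(p^2)=5c$---is never executed. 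You do not compute $c$, you do not compute $\sum_i v'_i(p^2)$, and you do not explain why the factor $v(p^2)=\tfrac12$ at $p=2$ (which downstairs reflects extra isotropy along the product-of-elliptic-curves locus) is absorbed on the cover, even though this is presumably the whole point of replacing $\Gamma(2,4)$ by $\Gamma^*(2,4)$, the subgroup excluding $\operatorname{diag}(-1,1,-1,1)$. Your own closing paragraph concedes this (``I expect that once Runge's normalization is fixed \ldots\ the identity drops out''). That expectation may well be right, but it is not a proof, and at the step where you ``lean on Runge'' for the normalization and ramification bookkeeping you might as well have cited \cite[Proposition~3.9]{Gruenewald} directly, as the paper does. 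So: correct reduction, plausible plan, but the decisive computation that pins down $c$ and $\sum_i v'_i(p^2)$ is missing.
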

\begin{proof} See \cite[Proposition 3.9]{Gruenewald}.\end{proof}

\begin{prop} Let $p>2$ be a prime number. The degree of $F_{p^2,i}$ is $p^3-p$.\end{prop}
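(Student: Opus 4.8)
The plan is to combine the degree formula for Humbert components (the proposition just stated, $a_{p^2}=10(1+\deg(F_{p^2,i}))$) with the known formula $v(p^2)\deg(H_{p^2})+5=a_{p^2}/2$ for the degree of $H_{p^2}$ inside $\Gamma_2\bb\HH_2$. First I would evaluate $a_{p^2}$ for $p>2$ as explicitly as possible: since $v(p^2)=1$ for odd $p$, we get $a_{p^2}=2\deg(H_{p^2})+10$, so $\deg(F_{p^2,i})=\tfrac{1}{10}a_{p^2}-1=\tfrac{1}{5}\deg(H_{p^2})+1$. So everything reduces to showing $\deg(H_{p^2})=5(p^3-p-1)/1$... wait, let me recompute: we need $\deg(F_{p^2,i})=p^3-p$, i.e. $a_{p^2}=10(p^3-p+1)$.

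So the crux is a closed-form evaluation of $a_{p^2}=24\sum_{x\in\Z,\,4\mid(p^2-x^2)}\sigma_1\!\big(\tfrac{p^2-x^2}{4}\big)+12p^2-2$. For odd $p$, $p^2\equiv1\bmod 8$, so $4\mid p^2-x^2$ forces $x$ odd, and then $p^2-x^2\equiv0\bmod 8$, so $\tfrac{p^2-x^2}{4}$ is even. The sum runs over odd $x$ with $|x|<p$ (plus, one should check whether $x=\pm p$ contributes — there $p^2-x^2=0$ and $\sigma_1(0)$ should be taken as $0$ or the term excluded). The plan would be to write $x=p-2t$ type substitutions, or better, to recognize this as a known class-number/Hurwitz-type sum. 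Actually the cleanest route: $a_{p^2}$ is essentially (up to the linear correction) the quantity that appears in the Eichler–Selberg / Hurwitz–Kronecker class number relations, and for a prime-square discriminant these sums telescope. I would compute $\sum_{x \text{ odd}, |x|<p}\sigma_1\!\big(\tfrac{p^2-x^2}{4}\big)$ by writing $\tfrac{p^2-x^2}{4}=\tfrac{(p-x)(p+x)}{4}$ and summing $\sigma_1$ over this family; for $p$ prime the factorization is constrained enough that one can push it through, or one can cite Runge's or Gruenewald's explicit tables/formulas if they give $a_{p^2}$ in closed form.

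An alternative, probably slicker, approach avoids evaluating $a_{p^2}$ in closed form at all: use $\deg(H_{p^2})$ directly. There should be an independent formula $\deg(H_{p^2})=\tfrac12(p^3+\cdots)$ — indeed from $2\deg(H_{p^2})+10=a_{p^2}$ and the target $a_{p^2}=10(p^3-p+1)$ we'd need $\deg(H_{p^2})=5(p^3-p)$. Hmm, that would make $\deg(F_{p^2,i})=\deg(H_{p^2})/5 + 1 = p^3-p-... $ — I need to be careful with the arithmetic, but the point structurally is: the Humbert component $F_{p^2,i}$ lives upstairs in the cover $\Gamma^*(2,4)\bb\HH_2\to\Gamma_2\bb\HH_2$, whose degree is $[\Gamma_2:\Gamma^*(2,4)]$ (a number like $2880$ or similar), and the relation between $\deg(F_{p^2,i})$ and $\deg(H_{p^2})$ reflects how $H_{p^2}$ splits and with what ramification in this cover. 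So the plan's second strand is: (a) recall/derive the closed form for $\deg(H_{p^2})$ for odd prime $p$ (this is classical — de Rham/van der Geer, or Gruenewald Theorem 3.8 combined with an arithmetic evaluation), (b) divide by the appropriate factor and add $1$. I would present whichever of the two strands leads to fewer arithmetic identities, and in the writeup explicitly verify the formula against the known data points $p=3$ ($\deg=24$), $p=5$ ($\deg=120$), $p=7$ ($\deg=336$) quoted in Section~\ref{databi}, since $3^3-3=24$, $5^3-5=120$, $7^3-7=336$ match.

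The main obstacle I expect is the explicit evaluation of the divisor sum $\sum_{x}\sigma_1\!\big(\tfrac{p^2-x^2}{4}\big)$ into a clean polynomial in $p$: this is a genuine elementary-number-theory computation that has to be done carefully (handling the parity constraint on $x$, the range $|x|\le p$, the boundary terms $x=\pm p$ where the argument of $\sigma_1$ vanishes, and the fact that $\tfrac{p^2-x^2}{4}$ need not be squarefree so $\sigma_1$ is not simply multiplicative across the two factors). For $p$ prime one can organize it as $\sum_{1\le a\le (p-1)/2}\sigma_1(a(p-a))$ after substituting $x=p-2a$... and then $\gcd(a,p-a)=\gcd(a,p)=1$ for $0<a<p$, so $\sigma_1(a(p-a))=\sigma_1(a)\sigma_1(p-a)$ genuinely is multiplicative here — that is the key simplification that makes the prime case tractable. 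Then $\sum_{a=1}^{p-1}\sigma_1(a)\sigma_1(p-a)$ is a classical convolution with a known closed form (it equals $\tfrac{5}{12}\sigma_3(p)-\tfrac{p}{2}\sigma_1(p)+\tfrac{1}{12}\sigma_1(p)$ by the Eichler–Selberg-type identity $\sum_{n<p}\sigma_1(n)\sigma_1(p-n)=\tfrac{5}{12}\sigma_3(p)+(\tfrac{1}{12}-\tfrac{p}{2})\sigma_1(p)$), and for $p$ prime $\sigma_3(p)=p^3+1$, $\sigma_1(p)=p+1$; plugging in and combining with the $12p^2-2$ and the factor $24$ (and dividing by the symmetry factor $2$ from $\pm x$, plus handling the extra unmatched $x=0$ or $x=\pm p$ terms correctly) should collapse to exactly $10(p^3-p+1)$, hence $\deg(F_{p^2,i})=p^3-p$. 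I would double check the boundary bookkeeping and the factor of $2$ from $x\leftrightarrow -x$ numerically at $p=3,5,7$ before committing to the constants.
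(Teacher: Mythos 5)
Your proposal is correct and takes essentially the same route as the paper's proof: the substitution $x=p-2a$ to rewrite the sum as $\sum_{a=1}^{(p-1)/2}\sigma_1(a(p-a))$, the use of primality of $p$ to split $\sigma_1(a(p-a))=\sigma_1(a)\sigma_1(p-a)$, and the classical convolution identity $\sum_{k<n}\sigma_1(k)\sigma_1(n-k)=\tfrac{5}{12}\sigma_3(n)+\bigl(\tfrac1{12}-\tfrac n2\bigr)\sigma_1(n)$, which the paper obtains from Zagier's Eisenstein-series relation $\tfrac{1}{2\pi i}G_2'=\tfrac56 G_4-2G_2^2$. You are slightly more careful than the paper about the boundary bookkeeping ($x=\pm p$ contributing $\sigma_1(0)=0$, the factor $2$ from $x\leftrightarrow -x$) and you add the useful numerical sanity check at $p=3,5,7$.
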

\begin{proof}
From the degree formula above, we have that $a_{p^2}=10(1+\deg(F_{p^2,i}))$ and from the definition of $a_{p^2}$ it suffices to prove that
$\sum_{x>0}\sigma_1(\frac{p^2-x^2}4)=(5p^3-6p^2-5p+6)/24$. The left-hand side can be rewritten as $\frac 12\sum_{k=1}^p\sigma_1(k)\sigma_1(p-k)$ and the result comes
from the equality $\frac{1}{2i\pi}G'_2(\Omega)=\frac 56G_4(\Omega)-2G_2(\Omega)^2$ of \cite{Zagier} where $G_i$ is the $ith$ Eisenstein series (in genus~$1$).
(Moreover, using the fact that $\sigma_1$ is multiplicative, it can be shown that for all $p>2$, the degree of $F_{4p^2,i}$ is also $p^3-p$, but we do not need this result).
\end {proof}

In our case, we use $\Gamma(2,4)$ and not $\Gamma^*(2,4)$, but we noted that the total degrees found for $p=3, 5$ and $7$ is 
always $p^3-p$.
The reason for this is that the degree formula depends on the number of Humbert components and the order of some 
isotropy subgroup and these numbers are equal for the groups  
 $\Gamma^*(2,4)$ and $\Gamma(2)$ (see \cite{Gruenewald}) so that it is the case for $\Gamma(2,4)$ because
 $\Gamma^*(2,4)<\Gamma(2,4)<\Gamma(2)$. Thus the degree formula of a component of discriminant $p^2$ for the group $\Gamma(2,4)$ is
 the same as those for the group $\Gamma^*(2,4)$, namely $p^3-p$.
Note that the definition of degree here is the total degree of the polynomial because the $\theta_i(\tau/2)$ are Siegel modular forms of weight $1$ for $\Gamma(2,4)$.

Consider this time the locus $\mathcal{L}_p'$ of all the principally polarized abelian surfaces modulo $\Gamma(2,4)$
 that are $(p,p)$-isogenous to a principally polarized abelian surface $\Omega$
which is isogenous to a product of two elliptic curves by the $(2,2)$-isogeny $\Omega\to\Omega/2$ and such that $\theta_0(\Omega/2)=0$ (recall that $b'_i(\Omega):=\theta_i(\Omega/2)/\theta_0(\Omega/2)$ and Proposition~\ref{zerodiag}).
\begin{prop}
The denominators of the modular polynomials for the functions $b'_1,b'_2,b'_3$ are divisible by a polynomial $L'_p$ in $\Q[b'_1,b'_2,b'_3]$ describing the preceding locus.
\end{prop}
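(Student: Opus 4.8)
The plan is to transcribe the proof of Lemma~\ref{DenIg} (\cite[Lemma 6.2]{BL}), the only genuinely new input being the determination of the polar locus of the functions $b'_i$, which plays here the role the Humbert surface $H_{p^2}$ (the polar locus of the $j$-invariants) plays there.

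First I would pin down where $b'_i=\theta_i(\,\cdot\,/2)/\theta_0(\,\cdot\,/2)$ has poles on $\Gamma(2,4)\bb\HH_2$. As the theta constants are holomorphic in $\Omega$, the function $b'_i$ is holomorphic wherever $\theta_0(\Omega/2)\neq 0$, so its pole divisor is supported on $V_0:=\{\Omega\in\HH_2:\theta_0(\Omega/2)=0\}$. Applying Proposition~\ref{zerodiag} to $\Omega/2$ shows that along $V_0$ the matrix $\Omega/2$ is $\Gamma_2$-equivalent to a diagonal period matrix — so $\Omega\to\Omega/2$ is a $(2,2)$-isogeny onto a product of two elliptic curves — and that $\theta_0$ is then the only even theta constant of $\Omega/2$ that vanishes; in particular $\theta_i(\Omega/2)\neq 0$ at a generic point of $V_0$ for $i=1,2,3$, so $b'_i$ really does have a pole there (of the same order as the zero of $\theta_0(\,\cdot\,/2)$). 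Hence, writing $f^\gamma_{i,p}(\Omega):=b'_i(p\gamma\Omega)$ for $\gamma\in C_p$, each $f^\gamma_{i,p}$ has its poles supported on $V_0^\gamma:=\{\Omega:\theta_0(p\gamma\Omega/2)=0\}$, and unwinding the definition gives $\mathcal{L}_p'=\bigcup_{\gamma\in C_p}V_0^\gamma$ as a reduced, $\Gamma(2,4)$-invariant surface (the invariance being automatic from that of the modular polynomials). Let $L'_p\in\Q[b'_1,b'_2,b'_3]$ be its reduced defining polynomial — a product of the irreducible equations of its components — using that $b'_1,b'_2,b'_3$ birationally parametrize $\Gamma(2,4)\bb\HH_2$, which follows from $\C_{\Gamma(2,4)}=\C(b_1,\dots,b_{15})$ together with (\ref{bequib}).

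Next recall from Definition~\ref{defPolMod} that, with $q=p^3+p^2+p+1$, the coefficient of $X^i$ in $\Phi_{1,p}$ is $(-1)^{q-i}$ times the elementary symmetric function $e_{q-i}$ of the $f^\gamma_{1,p}$, while the coefficient of $X^i$ in $\Psi_{\ell,p}$ is $(-1)^{q-1-i}\sum_{\gamma\in C_p}f^\gamma_{\ell,p}\,e_{q-1-i}\big(\{f^{\gamma'}_{1,p}:\gamma'\neq\gamma\}\big)$. These coefficients are $\Gamma(2,4)$-invariant meromorphic functions, hence by Proposition~\ref{LieQ} they lie in $\Q(b'_1,b'_2,b'_3)$; write such a coefficient as $P_i/Q_i$ in lowest terms. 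Since every $f^\gamma_{i,p}$ has poles only on $\mathcal{L}_p'$, the pole divisor of $P_i/Q_i$ is supported on $\mathcal{L}_p'$, i.e.\ $\{Q_i=0\}\subseteq\mathcal{L}_p'$. For the inclusion we actually need, fix a component $V_0^{\gamma_0}$ of $\mathcal{L}_p'$ and a local coordinate $t$ transverse to it at a generic point: there the only functions among the $f^\gamma_{1,p},f^\gamma_{\ell,p}$ that blow up are $f^{\gamma_0}_{1,p}$ and $f^{\gamma_0}_{\ell,p}$, which share the denominator $\theta_0(p\gamma_0\Omega/2)$ and hence have a common pole order $m$, while all the others stay finite. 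Writing $f^{\gamma_0}_{1,p}=at^{-m}+\cdots$ and $f^{\gamma_0}_{\ell,p}=bt^{-m}+\cdots$ with $a,b\neq 0$, a one-line expansion of $e_{q-i}$ (resp.\ of the $\Psi_{\ell,p}$-coefficient) exhibits a pole along $V_0^{\gamma_0}$ whose leading polar coefficient is, respectively, $a\,e_{q-i-1}\big(\{f^\gamma_{1,p}:\gamma\neq\gamma_0\}\big)$ and $a\sum_{\gamma\neq\gamma_0}f^\gamma_{\ell,p}\,e_{q-2-i}\big(\{f^{\gamma'}_{1,p}:\gamma'\neq\gamma,\gamma_0\}\big)+b\,e_{q-1-i}\big(\{f^{\gamma'}_{1,p}:\gamma'\neq\gamma_0\}\big)$, evaluated at the generic point. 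Granting that these do not vanish identically along $V_0^{\gamma_0}$, every non-leading coefficient of the three modular polynomials has a pole along $V_0^{\gamma_0}$; running over all components of $\mathcal{L}_p'$ forces every irreducible factor of $L'_p$, hence $L'_p$ itself, to divide each $Q_i$.

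The hard part is precisely the granted fact — ruling out cancellation, i.e.\ showing those leading polar coefficients are not identically zero on $V_0^{\gamma_0}$; this is the analogue of the corresponding step in the proof of Lemma~\ref{DenIg}. I would argue that each such coefficient is a fixed nonzero polynomial in the ``coordinates'' given by the $b'$-values of the $q-1$ finite $(p,p)$-isogenous partners of a point of $V_0^{\gamma_0}$ together with the ratio $\theta_i(p\gamma_0\Omega/2)/\theta_\ell(p\gamma_0\Omega/2)=a/b$, and that these take Zariski-dense values as $\Omega$ ranges over $V_0^{\gamma_0}$: for $\Phi_{1,p}$ this is immediate since $e_{q-i-1}$ of generic values is nonzero and $e_0=1$, and for $\Psi_{\ell,p}$ one uses that $b\neq 0$, that $e_{q-1-i}$ of generic values is nonzero, and that $a/b$ varies non-constantly and independently of the remaining data. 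All the rest — coprimality of $P_i,Q_i$, bookkeeping of pole orders, and the passage between pole divisors and polynomial divisibility — is routine and identical to the $j$-invariant case.
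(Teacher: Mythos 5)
Your proposal is correct and follows essentially the same route as the paper, which itself adapts the proof of Lemma~6.2 of \cite{BL}: identify the locus where some $b'^{\gamma}_{1,p}$ blows up, note the coefficients of $\Phi_{1,p}$ and $\Psi_{\ell,p}$ are symmetric expressions in these, invoke the ``generically no algebraic relation'' heuristic to rule out cancellation, and conclude the denominator vanishes there. Your version is more explicit about the local transverse-coordinate expansion and the identification $\mathcal{L}'_p=\bigcup_{\gamma}V_0^{\gamma}$ via Proposition~\ref{zerodiag}, but it is the same argument and honestly flags the same non-cancellation step as the genuine content.
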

\begin{proof}
We adapt the proof of lemma 6.2 of \cite{BL}. Let $\Omega\in\Gamma(2,4)\bb\HH_2$ which is $(p,p)$-isogenous to $\Omega'$ such that $\theta_0(\Omega'/2)=0$. Let $c$ be a
coefficient of the polynomial $\Phi_{1,p}$. For some $\gamma\in\Gamma(2,4)/(\Gamma(2,4)\cap\Gamma_0(p))$, $b'_{1,p}(\gamma\Omega)$ is infinite.
The evaluation of $c$ at $\Omega$ is a symmetric expression in the $b'^{\gamma}_{1,p}(\Omega)$'s. Generically, there is no algebraic relation between these values and the evaluation of $c$ at $\Omega$
is therefore infinite. Since the $b'_i(\Omega)$ are finite, the numerator of $c$ is finite. We conclude that the denominator of $c$ must vanish at $\Omega$, which means that $c$ is divisible 
by a polynomial describing the locus. The proof for $\Phi_{\ell,p}$, $\ell=2,3$ proceeds similarly.
\end{proof}

We have noticed that for $p=3,5$ and $7$, the coefficients of the three modular polynomials with the $b'_i$ always have  $L'_p$ has denominator (unlike the case with the $j$-invariants where there
also is a factor $j_1$ or $i_3$, as explained in Section \ref{CompRes}). 
This justified the following conjecture, which will be used in the next sections. 
\begin{conj} \label{conjden}
The polynomial $L'_p$ is the denominator of all the coefficient of the three modular polynomials.
\end{conj}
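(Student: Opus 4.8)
\emph{Sketch of a possible proof.} The plan is to upgrade the preceding proposition, which only asserts that $L'_p$ divides every denominator, into an equality, by determining exactly the poles of the coefficients of the three modular polynomials $\Phi_{1,p}$, $\Psi_{2,p}$, $\Psi_{3,p}$ viewed as rational functions of $b'_1,b'_2,b'_3$. First I would localise the poles. Each coefficient is a symmetric expression in the values $b'_1(p\gamma\Omega)$ (and, for $\Psi_{\ell,p}$, also in the $b'_\ell(p\gamma\Omega)$), with $\gamma$ ranging over $C_p$, and since $b'_i(\Xi)=\theta_i(\Xi/2)/\theta_0(\Xi/2)$, such a value is finite unless $\theta_0(p\gamma\Omega/2)=0$. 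Using the bijection between the classes of $\Gamma(2,4)/(\Gamma(2,4)\cap\Gamma_0(p))$ and those of $\Gamma_2/\Gamma_0(p)$, the matrices $p\gamma\Omega$, $\gamma\in C_p$, run over the period matrices of the $(p,p)$-isogenous surfaces; and by Proposition~\ref{zerodiag}, $\theta_0(\Xi/2)=0$ forces $\Xi/2$ to be $\Gamma_2$-equivalent to a diagonal matrix, i.e.\ $\Xi$ is $(2,2)$-isogenous through $\Xi\mapsto\Xi/2$ to a product of elliptic curves. Hence $\bigcup_{\gamma\in C_p}\{\Omega:\theta_0(p\gamma\Omega/2)=0\}$ is precisely the locus $\mathcal{L}_p'$, so every coefficient is holomorphic off $\mathcal{L}_p'$; as $b'_1,b'_2,b'_3$ generate $\C_{\Gamma(2,4)}$, the map $\Omega\mapsto(b'_1(\Omega),b'_2(\Omega),b'_3(\Omega))$ is dominant, so the denominator of each coefficient, written in lowest terms, is supported on $\mathcal{L}_p'$ and hence divides a power of the squarefree polynomial $L'_p$.

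Next I would bound the multiplicity, aiming to show that $L'_p$ occurs only to the first power. Fix an irreducible component $V$ of $\mathcal{L}_p'$: it is a component of $\{\Omega:\theta_0(p\gamma_0\Omega/2)=0\}$ for a single $\gamma_0\in C_p$, and at a generic point $P$ of $V$ — using that $\Omega\mapsto\theta_0(p\gamma_0\Omega/2)$ vanishes to order one along $V$ — the value $b'_i(p\gamma_0\Omega)$ has a pole of order exactly $1$ at $P$, whereas the $b'_i(p\gamma\Omega)$ with $\gamma\neq\gamma_0$ remain finite there because the distinct cosets cut out distinct surfaces. Substituting this into the elementary symmetric functions in the $b'_1(p\gamma\Omega)$ — the coefficients of $\Phi_{1,p}$ — and into the Lagrange-type sums defining $\Psi_{\ell,p}$, one finds that every coefficient acquires a pole of order exactly $1$ along $V$, with no cancellation for a generic $P$ because the surviving symmetric combinations of the finite values $b'_i(p\gamma\Omega)$ satisfy no forced algebraic relation (the same genericity used in the proof of the preceding proposition), and in particular no numerator vanishes along $V$. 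Thus, in lowest terms, the denominator of each coefficient equals $L'_p$ up to a constant; combined with the divisibility statement of the preceding proposition this gives the conjecture, and it accounts for the absence of the extra factor $j_1^\alpha$ or $i_3^\alpha$ that occurs with the $j$-invariants: the functions $b'_1,b'_2,b'_3$ have no common pole locus other than $\mathcal{L}_p'$.

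The delicate point is the multiplicity-one part, and most of all the claim that the surfaces $\{\Omega:\theta_0(p\gamma\Omega/2)=0\}$, $\gamma\in C_p$, are pairwise distinct in $\Gamma(2,4)\bb\HH_2$: if two of them shared a component, several $b'_1(p\gamma\Omega)$ would blow up simultaneously along it, the higher elementary symmetric functions would acquire poles of order $\ge 2$, and the conjecture would be false. One must also verify that $\Omega\mapsto\theta_0(p\gamma\Omega/2)$ vanishes to order exactly one along each component of $\mathcal{L}_p'$ it meets. Settling these points seems to require a finer analysis of the Humbert components of discriminant $p^2$ in the cover $\Gamma(2,4)\bb\HH_2$, in the spirit of \cite{Runge} and \cite{Gruenewald}, than is developed here — which is presumably why the assertion is recorded as a conjecture rather than a theorem.
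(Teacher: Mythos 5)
The statement you chose is recorded in the paper as a \emph{conjecture}: the author proves nothing here and only reports the experimental observation that, for $p=3,5,7$, the computed denominators of all coefficients coincide with the single irreducible polynomial $L'_p$ cutting out the locus $\mathcal{L}_p'$. You recognise this in your closing paragraph, so there is nothing in the paper for your sketch to be compared against, and assessing it on its own merits is the only option.

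Your strategy --- localise the poles on $\mathcal{L}_p'$, then argue that the pole order along it is exactly one --- is the natural approach, and you flag the essential difficulties. But the way you phrase the crucial obstruction is misleading. You ask for the surfaces $\{\Omega:\theta_0(p\gamma\Omega/2)=0\}$, $\gamma\in C_p$, to be ``pairwise distinct in $\Gamma(2,4)\backslash\HH_2$''; read literally this clashes with the paper's assertion that $L'_p$ is irreducible, because downstairs $\mathcal{L}_p'$ is a \emph{single} irreducible surface which is the common image of all those pieces --- they are distinct only upstairs, in $\HH_2$ or in $(\Gamma(2,4)\cap\Gamma_0(p))\backslash\HH_2$. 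What the multiplicity-one argument actually needs is that at a generic point of $\mathcal{L}_p'$, only one $\gamma\in C_p$ satisfies $\theta_0(p\gamma\Omega/2)=0$ --- equivalently, that the degree-$(p^3+p^2+p+1)$ cover above $\mathcal{L}_p'$ meets the zero divisor of $\theta_0(p\,\cdot/2)$ in a single sheet, i.e.\ a generic surface in the locus has a unique $(p,p)$-isogeny to a $\theta_0$-diagonal one. That is plausible (being ``doubly special'' should drop the codimension by at least two), but it is a genuine statement about Humbert components of discriminant $p^2$ over $\Gamma(2,4)$ that has to be proved, not merely observed. Moreover, dismissing cancellation ``by genericity'' is too quick: the $b'^{\gamma}_{1,p}$ are conjugate roots of a single irreducible polynomial over $\C_{\Gamma(2,4)}$, so their polar parts along $\mathcal{L}_p'$ are algebraically constrained, and ruling out cancellation in the symmetric functions (and in the $\Psi_{\ell,p}$ sums) is itself part of what must be shown. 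Together with the squarefree-vanishing and lowest-terms issues you note, these are presumably the reasons the paper leaves the statement as a conjecture.
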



\subsection{Symmetries}

As mentioned above (Section \ref{databi}), we have noticed for $p=3,5$ that
$\Psi_{2,p}(X,b'_1,b'_2,b'_3)=
\Psi_{3,p}(X,b'_1,b'_3,b'_2)$ and  for $p=3,5,7$ that $\Phi_{1,p}(X,b'_1,b'_2,b'_3)=\Phi_{1,p}(X,b'_1,b'_3,b'_2)$. 
These symmetries have the following meaning.
For each variety $\Omega\in\Gamma(2,4)$ and $p\Omega$ having invariants $(b_1'(\Omega)$, $b_2'(\Omega)$, $b_3'(\Omega))$ 
and $(b'_{1,p}(\Omega)$, $b'_{2,p}(\Omega)$, $b'_{3,p}(\Omega))$, there exists a variety with invariants 
$(b_1'(\Omega),b_3'(\Omega),b_2'(\Omega))$ and such that one of its $(p,p)$-isogenous varieties has invariants $(b'_1(p\Omega),b'_3(p\Omega),b'_2(p\Omega))$.

A proof of this can be obtained by looking at the action of some matrices.
Indeed, we have that $\Phi_{1,p}$ is the minimal polynomial of $b_{1,p}$, which means it is the unique polynomial such that for all $\Omega\in\HH_2$, 
$\Phi_{1,p}(x,b'_1(\Omega),b'_2(\Omega),b'_3(\Omega))=0$ \ssi $x=b'_{1,p}(\Omega)$; hence $\Phi_{1,p}(b'_{1,p}(\gamma\Omega),$ $b'_1(\gamma\Omega),$ $b'_2(\gamma\Omega),$ $b'_3(\gamma\Omega)
)=0$ for all $\gamma\in\Gamma_2$. What we are looking for is a matrix that fixes $b'_1$ and $b'_{1,p}$ and interchanges $b'_2$ with $b'_3$ and
$b'_{2,p}$ with $b'_{3,p}$. This action on $\Phi_{1,p}(X)$ would provide a unitary polynomial with the same roots and degree as $\Phi_{1,p}(X)$ and since $\Phi_{1,p}(X)$ is a minimal polynomial, they both have to be equal.

Assume that we have the symmetry for $\Phi_{1,p}$. Then by Definition~\ref{defPolMod} we have $b'_{\ell,p}=\Psi_{\ell,p}(b'_{1,p})/\Phi'_{1,p}(b'_{1,p})$ for $\ell=2,3$.
We use this action on $\Psi_{2,p}(X)$ which gives us  \[b'_{3,p}=\Psi_{2,p}(b'_{1,p},b'_1,b'_3,b'_2)/\Phi_{1,p}'(b'_{1,p},b'_1,b'_2,b'_3),\] so that 
\[\Psi_{2,p}(b'_{1,p},b'_1,b'_3,b'_2)=b'_{3,p}\Phi_{1,p}'(b'_{1,p},b'_1,b'_2,b'_3)=\Psi_{3,p}(b'_{1,p},b'_1,b'_2,b'_3).\]


Firstly, the search is done among the representatives of $\Gamma_2/\Gamma(2,4)$ because $\Gamma(2,4)$ fixes the $b'_i$. 
A representative of the unique class such that $(b'^\gamma_1,b'^\gamma_2,b'^\gamma_3)=(b'_1,b'_3,b'_2)$ is
\[\gamma=\begin{ppsmallmatrix}1&-3&-2&2\\0&1&2&0\\0&0&1&0\\0&-4&-5&1\end{ppsmallmatrix}.\]

Secondly, we look for a matrix $\gamma'$ in $\Gamma(2,4)$ such that $\gamma\gamma'\in\Gamma_0(p)$. For $p=3,5$ and $7$ we can take for $\gamma'$ respectively
\[\begin{ppsmallmatrix}-5&24&-12&12\\-2&19&-12&8\\0&6&-5&2\\-2&4&0&3\end{ppsmallmatrix},
\begin{ppsmallmatrix}-7&6&4&2\\0&-7&2&0\\0&10&-3&0\\10&-8&-6&-3\end{ppsmallmatrix}\textrm{ and } 
\begin{ppsmallmatrix}13&12&-16&-6\\-10&-3&10&4\\56&14&-55&-22\\30&-40&-12&-7 \end{ppsmallmatrix}.
\]


Recall that for a matrix $X$, we denote by $X_0$ the vector composed of the diagonal entries of $X$.
\begin{lemm}\label{lemclasse}  
Let $M=\begin{ppsmallmatrix}A'&B'\\C'&D'\end{ppsmallmatrix}
\in\Gamma_2/\Gamma(2,4)$ and $M'\in\Gamma(2,4)$ such that $MM'\in\Gamma_0(p)$, for some prime $p>2$. Then $(MM')_p$ is in the same equivalence class as $M$
for all $p\equiv 1\bmod 4$. For $p\equiv 3\bmod 4$, this is the case if we have the additional properties $(A'\,^t\!B')_0\equiv 0\bmod 2$ and $(C'\,^t\!D')_0\equiv 0\bmod 2$.
\end{lemm}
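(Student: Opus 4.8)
The plan is to analyze the matrix $\gamma_p := (MM')_p$ directly via its entries and to check the two membership conditions that determine an equivalence class modulo $\Gamma(2,4)$: being $\equiv I_4 \bmod 2$, and having the off-diagonal blocks' relevant diagonal vectors $\equiv 0 \bmod 4$. Write $MM' = \begin{ppsmallmatrix}A&B\\C&D\end{ppsmallmatrix}$ with $C \equiv 0 \bmod p$, so that $C/p$ is an integer matrix and $(MM')_p = \begin{ppsmallmatrix}A&pB\\C/p&D\end{ppsmallmatrix} \in \Gamma_2$. The key reductions modulo $2$ (and $4$): since $p$ is odd, $pB \equiv B \bmod 2$ and $pB \equiv 3B \equiv -B \bmod 4$ when $p \equiv 3 \bmod 4$, while $pB \equiv B \bmod 4$ when $p \equiv 1 \bmod 4$. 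Similarly $C/p \equiv C \cdot p^{-1}$, and modulo $2$ and $4$ the inverse $p^{-1}$ equals $p$ itself (as $p^2 \equiv 1 \bmod 8$). So modulo $2$, $(MM')_p \equiv MM' \bmod 2$; since $M \equiv MM' \bmod 2$ (because $M' \in \Gamma(2,4) \subseteq \Gamma(2)$), we get $(MM')_p \equiv M \bmod 2$, handling the first congruence condition for all odd $p$.

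Next I would check the condition $B'_0 \equiv C'_0 \equiv 0 \bmod 4$ for the blocks of $(MM')_p$, i.e. $(pB)_0 \equiv 0 \bmod 4$ and $(C/p)_0 \equiv 0 \bmod 4$, and then use Proposition~\ref{funceq} to see that $\gamma_p$ and $M$ act on the $b'_i$ in the same way (equivalently, lie in the same coset of $\Gamma_2/\Gamma(2,4)$, which is what controls the action on the $b'_i$). Write $MM' = M \cdot M'$ with $M = \begin{ppsmallmatrix}A'&B'\\C'&D'\end{ppsmallmatrix}$ and $M' = \begin{ppsmallmatrix}A''&B''\\C''&D''\end{ppsmallmatrix}$, where $A'' \equiv D'' \equiv I_2 \bmod 2$, $B'' \equiv C'' \equiv 0 \bmod 2$, and $B''_0 \equiv C''_0 \equiv 0 \bmod 4$. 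Then $B = A'B'' + B'D''$ and $C = C'B'' + D'C''$. Modulo $4$, $(pB)_0 \equiv \pm B_0 \equiv \pm(A'B'' + B'D'')_0$. One computes $(A'B'')_0 \equiv \sum_j A'_{ij}B''_{ji}$; since $B'' \equiv 0 \bmod 2$ and $A''$'s diagonal enters only the even part... here one must track exactly how $(MM')_0 \bmod 4$ compares with $M'_0$, $M_0$, using the symplectic relations \eqref{eqSpg} and the hypotheses $(A'\,^t\!B')_0 \equiv 0 \bmod 2$, $(C'\,^t\!D')_0 \equiv 0 \bmod 2$ in the $p \equiv 3 \bmod 4$ case (these are exactly the $e', e''$ appearing in Proposition~\ref{funceq} for $M$, so they encode whether the characteristic shift introduced by $M$ is already trivial mod the relevant lattice). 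The case $p \equiv 1 \bmod 4$ is easier because then $pB \equiv B$, $C/p \equiv C \bmod 4$, so no sign corrections intrude and the congruences $B_0, C_0 \equiv 0 \bmod 4$ follow from $MM'$ being, modulo $4$, in the same class as $M$ composed with an element of $\Gamma(2,4)$.

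Concretely, the cleanest route is: show $(MM')_p (MM')^{-1} \in \Gamma(2,4)$ up to the diagonal-vector conditions, by writing $(MM')_p = \begin{ppsmallmatrix}A&pB\\C/p&D\end{ppsmallmatrix}$ and noting $(MM')_p (MM')^{-1} = I_4 + \begin{ppsmallmatrix}0 & (p-1)B\\ (1/p - 1)C & 0\end{ppsmallmatrix}(MM')^{-1}$; since $p$ is odd, $(p-1)$ and $(1/p-1)$ are even, giving an element $\equiv I_4 \bmod 2$, and one then examines its $B$- and $C$-diagonals modulo $4$. For $p \equiv 1 \bmod 4$, $p - 1 \equiv 0 \bmod 4$ and $1/p - 1 \equiv 0 \bmod 4$ (as $p^{-1} \equiv p \bmod 4$), so the perturbation is $\equiv 0 \bmod 4$ entrywise and the product lies in $\Gamma(2,4)$ outright, whence $(MM')_p \equiv MM' \equiv M$ in $\Gamma_2/\Gamma(2,4)$ and the class is unchanged. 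For $p \equiv 3 \bmod 4$, $p - 1 \equiv 2 \bmod 4$, so the $B$-diagonal of the perturbation is $2 \cdot (\text{diagonal of } B(MM')^{-1})$, and one must show this diagonal is even, which is where the extra hypotheses $(A'\,^t\!B')_0 \equiv (C'\,^t\!D')_0 \equiv 0 \bmod 2$ get used (together with $\,^t\!BD$ symmetric from \eqref{eqSpg}).

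The main obstacle I expect is precisely this last $\bmod 4$ bookkeeping in the $p \equiv 3 \bmod 4$ case: disentangling which diagonal entries of the products $A'B'' + B'D''$ and $C'B'' + D'C''$ survive modulo $4$, and verifying that the stated parity hypotheses on $M$ are exactly what is needed to kill the obstruction. Everything else — reducing $p$ and $p^{-1}$ modulo powers of $2$, the mod-$2$ statement, and the translation from "same class in $\Gamma_2/\Gamma(2,4)$" to "same action on the $b'_i$" via Proposition~\ref{funceq} — is routine.
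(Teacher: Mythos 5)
Your ``cleanest route'' is exactly the paper's proof: both compute $(MM')_p(MM')^{-1}$ explicitly and check membership in $\Gamma(2,4)$, and your handling of the $p\equiv 1\bmod 4$ case is correct. But two things in the write-up should be fixed.

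First, the early paragraph where you propose to check $(pB)_0\equiv 0\bmod 4$ and $(C/p)_0\equiv 0\bmod 4$ for $(MM')_p$ itself is a wrong turn: those are the conditions for a matrix to lie in $\Gamma(2,4)$, not for two matrices to be congruent modulo $\Gamma(2,4)$. You implicitly retract this when you switch to examining $(MM')_p(MM')^{-1}$, but as written the middle of the argument is chasing the wrong congruence.

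Second, for $p\equiv 3\bmod 4$ you stop before the computation is finished, and the symplectic identity you cite ($\,^t\!BD$ symmetric) is not the one that closes the gap. Writing $MM'=\pMat$ and using the explicit inverse $\begin{ppsmallmatrix}\,^t\!D&-\,^t\!B\\-\,^t\!C&\,^t\!A\end{ppsmallmatrix}$, the off-diagonal blocks of $(MM')_p(MM')^{-1}$ are $-A\,^t\!B+pB\,^t\!A$ and $(C/p)\,^t\!D-D\,^t\!C$. The relations you actually need are $A\,^t\!B=B\,^t\!A$ and $C\,^t\!D=D\,^t\!C$ (the right-hand column of (\ref{eqSpg})): they collapse these blocks to $(p-1)A\,^t\!B$ and $(1-p)(C/p)\,^t\!D$. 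For $p\equiv 1\bmod 4$ both vanish mod $4$ outright; for $p\equiv 3\bmod 4$ they are $\equiv 2A\,^t\!B$ and $\equiv 2C\,^t\!D$ mod $4$, and since $M'\equiv I_4\bmod 2$ forces $A\equiv A'$, $B\equiv B'$, $C\equiv C'$, $D\equiv D'$ mod $2$ (and $C/p\equiv C\bmod 2$ as $p$ is odd), the hypotheses $(A'\,^t\!B')_0\equiv(C'\,^t\!D')_0\equiv 0\bmod 2$ are precisely what kill the diagonals mod $4$. Supplying this computation completes your argument and makes it identical to the paper's.
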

\begin{proof}
Let $MM'=\pMat$. We study under which conditions  $(MM')_pM^{-1}\in\Gamma(2,4)$ or, equivalently, when $(MM')_p(MM')^{-1}$ is in $\Gamma(2,4)$. We have 
\[\begin{ppsmallmatrix}A&pB\\C/p&D\end{ppsmallmatrix} \begin{ppsmallmatrix}\,^t\!D&-\,^t\!B\\-\,^t\!C&\,^t\!A\end{ppsmallmatrix}=
\begin{ppsmallmatrix}A\,^t\!D-pB\,^tC&-A\,^t\!B+pB\,^t\!A\\C/p\,^t\!D-D\,^t\!C&-C/p\,^t\!B+D\,^t\!A\end{ppsmallmatrix},
\]
where $\begin{ppsmallmatrix}\,^t\!D&-\,^t\!B\\-\,^t\!C&\,^t\!A\end{ppsmallmatrix}$ is the inverse of $M$ by Equation~(\ref{eqSpg}).
As $p\equiv 1\bmod 2$, this product is the identity modulo $2$.
Now for $p\equiv 1\bmod 4$, we have $-A\,^t\!B+pB\,^t\!A\equiv C/p\,^t\!D-D\,^t\!C \equiv 0\bmod 4$ (recall that this product is in $\Gamma_2$) so that
$(MM')_p(MM')^{-1}\in\Gamma(2,4)$. For $p\equiv 3\bmod 4$, we have $-A\,^t\!B+pB\,^t\!A\equiv 2A\,^t\!B\bmod 4$ and $C/p\,^t\!D-D\,^t\!C\equiv 2C\,^t\!D\bmod 4$.
Thus to be in $\Gamma(2,4)$, we want $(A\,^t\!B)_0\equiv (C\,^t\!D)_0\equiv 0\bmod 2$. Finally note that  $M'\equiv I_4\bmod 2$ and we deduce  the lemma.
\end{proof}

By this lemma, we have that $(\gamma\gamma')_p$ is in the same equivalence class as $\gamma$ for any prime $p>2$, hence the permutation $(b'^{\gamma\gamma'}_{1,p},b'^{\gamma\gamma'}_{2,p},b'^{\gamma\gamma'}_{3,p})=(b'_{1,p},b'_{3,p},b'_{2,p})$.
Moreover the surjectivity of $\textrm{Sp}_4(\Z)\to\textrm{Sp}_4(\Z/4p\Z)$ and the Chinese remainder theorem prove that the matrix $\gamma'$ always exists. Thus there are these symmetries 
for all prime $p>2$ (see Theorem~\ref{ThSym}).

By the above we have also proved that the denominator is always symmetric in $b'_2$ and $b'_3$.
To prove that $L'_p$ is also symmetric in $b'_1$ and $b'_2$ (resp. $b'_1$ and $b'_3$), we use the matrices 
\[
\gamma_{410}=
\begin{ppsmallmatrix}0&-1&0&0\\-1&0&0&0\\0&0&0&-1\\0&0&-1&0\end{ppsmallmatrix}\qquad\textrm{ and }\qquad
\gamma_{8316}=\begin{ppsmallmatrix}1&0&0&2\\-3&1&2&-2\\-4&0&1&-5\\0&0&0&1\end{ppsmallmatrix}
\]
which fixes $b'_3$ (resp. $b'_2$) and interchanges $b'_1$ with $b'_2$ (resp. with $b'_3$).

The action of $\gamma_{410}$ (resp. $\gamma_{8316}$) on $L'_p$ provides an irreducible polynomial and with the same roots as $L'_p$, which are still in $\mathcal{L}_p'$ by the following lemma.
Hence this polynomial is $L'_p$ and thus it is symmetric.

\begin{lemm}\label{memeHumbert}
Let $\Omega\in\mathcal{L}_p'$, $\gamma=\pMat\in\Gamma_2/\Gamma(2,4)$ and $\gamma'$ such that $\gamma_p$ is in the same equivalence class of $\gamma'$.
Suppose that the action of $\gamma'$ on the theta constants sends $\{0,4,8,12\}$ to itself. Then $\gamma\Omega$ is in $\mathcal{L}_p'$.
\end{lemm}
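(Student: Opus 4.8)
The plan is to unwind the definition of $\mathcal{L}_p'$ in terms of an explicit witness, transport that witness by $\gamma$, and check that the defining vanishing survives because it is a \emph{symmetric} expression in the theta constants indexed by $\{0,4,8,12\}$.

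First I would record what membership means: $\Omega\in\mathcal{L}_p'$ iff $\Omega$ has a $(p,p)$-isogenous variety $W$ with $\theta_0(W/2)=0$ (the "$(2,2)$-isogeny to a product" clause being automatic then, by Proposition~\ref{zerodiag}); since the isogenous varieties are tracked modulo $\Gamma(2,4)$, this means $W=p\gamma_0\Omega$ for some $\gamma_0\in\Gamma(2,4)$ with $\theta_0(p\gamma_0\Omega/2)=0$. Next I would reduce to the case $\gamma\in\Gamma_0(p)$. Since $\Gamma_2=\Gamma(2,4)\,\Gamma_0(p)$ (this is the surjectivity of $\Gamma(2,4)\to\Gamma_2/\Gamma_0(p)$ used earlier, coming from the Chinese remainder theorem and surjectivity of $\operatorname{Sp}_4(\Z)\to\operatorname{Sp}_4(\Z/4p\Z)$), I may replace $\gamma$ by a $\Gamma(2,4)$-equivalent representative in $\Gamma_0(p)$; this changes neither $\gamma\Omega$ modulo $\Gamma(2,4)$ (hence not membership in $\mathcal{L}_p'$) nor, by Lemma~\ref{lemclasse}, the permutation of characteristics the hypothesis speaks about. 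So assume $\gamma\in\Gamma_0(p)$, set $\gamma'=\gamma_p\in\Gamma_2$, and note $\gamma'\equiv\gamma\bmod\Gamma(2,4)$ by Lemma~\ref{lemclasse}.

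Now I would exhibit the transported witness. Using the identities $p\eta\tau=\eta_p(p\tau)$, valid for \emph{every} $\eta\in\Gamma_2$, and $(\eta\eta'')_p=\eta_p\eta''_p$, one gets
\[
\gamma'W=\gamma_p\bigl(p\gamma_0\Omega\bigr)=\gamma_p(\gamma_0)_p(p\Omega)=(\gamma\gamma_0)_p(p\Omega)=p\,\gamma\gamma_0\Omega=p\,(\gamma\gamma_0\gamma^{-1})(\gamma\Omega).
\]
Since $\gamma_0\in\Gamma(2,4)$ and $\Gamma(2,4)$ is normal, $\gamma\gamma_0\gamma^{-1}\in\Gamma(2,4)$, so $W':=\gamma'W$ is a genuine $(p,p)$-isogenous variety of $\gamma\Omega$ in the sense used to define the modular polynomials for the $b'_i$. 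It thus suffices to show $\theta_0(W'/2)=0$. Inverting the $4\times4$ Hadamard relation in the duplication formula (Proposition~\ref{dupl}) gives $\theta_0(\tau/2)^2=\theta_0(\tau)^2+\theta_4(\tau)^2+\theta_8(\tau)^2+\theta_{12}(\tau)^2$, the even theta constants occurring being precisely those indexed by $\{0,4,8,12\}$ (the characteristics with $b=0$). Applying this at $\tau=\gamma'W$ and then the functional equation (Proposition~\ref{funceq}) to each $\theta_i(\gamma'W)$ with $i\in\{0,4,8,12\}$: because the action of $\gamma'$ permutes $\{0,4,8,12\}$, this combination is carried to a nonzero multiple of $\theta_0(W)^2+\theta_4(W)^2+\theta_8(W)^2+\theta_{12}(W)^2=\theta_0(W/2)^2=0$. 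Hence $\theta_0(W'/2)=0$ and $\gamma\Omega\in\mathcal{L}_p'$.

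\textbf{Main obstacle.} The delicate step is the last one: the functional equation attaches to each $\theta_i$, $i\in\{0,4,8,12\}$, an automorphy factor that is \emph{a priori} an eighth root of unity (through the term $\exp(-i\pi\,{}^taA'{}^tB'a)$), and the argument only works if these factors agree after squaring, so that the symmetric combination maps to a scalar multiple of itself and not to a differently twisted quadratic form in the $\theta_{a,0}$'s --- otherwise, as the Hadamard transform shows, its zero locus would be that of a different $\theta_{0,b}(\cdot/2)$ and the conclusion would fail. Verifying this consistency is the heart of the matter: it should follow from the hypothesis that $\gamma'$ stabilises $\{0,4,8,12\}$ together with the symplectic relations $A'{}^tB'=B'{}^tA'$, $D'{}^tC'=C'{}^tD'$, etc.\ of~(\ref{eqSpg}), and, when $p\equiv3\bmod4$, from the additional parity put into $\gamma'$ by Lemma~\ref{lemclasse}. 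In other words it is the $(2,2)$-isogeny analogue of the congruence bookkeeping already carried out in Lemma~\ref{lemclasse}.
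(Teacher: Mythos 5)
Your proof follows the paper's own argument very closely: unpack membership in $\mathcal{L}_p'$ via a witness $W=p\gamma_0\Omega$ with $\gamma_0\in\Gamma(2,4)$ and $\theta_0(W/2)=0$, transport it using the formal identity $p\eta\tau=\eta_p(p\tau)$ so that $W'=\gamma_p W=p(\gamma\gamma_0\gamma^{-1})(\gamma\Omega)$ is a $(p,p)$-isogenous variety of $\gamma\Omega$ by normality of $\Gamma(2,4)$, and then check $\theta_0(W'/2)=0$ via the duplication formula and the functional equation; the paper instead chooses $M'\in\Gamma(2,4)$ with $M'\gamma=\gamma''M$, $\gamma''\in\Gamma_0(p)$, but this is the same bookkeeping done from the other side. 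The ``main obstacle'' you correctly flag -- that the per-characteristic eighth-root automorphy factors attached to the $\theta_i$, $i\in\{0,4,8,12\}$, must coincide (after squaring) for the symmetric sum to map to a \emph{single} scalar multiple of $\theta_0(W/2)^2$ -- is in fact also elided in the paper, whose proof passes directly from ``the action of $\gamma'$ sends $\{0,4,8,12\}$ to itself'' to the claimed scalar identity $\sum_{i}\theta_i^2(\gamma''_p\cdot)=\zeta_{\gamma''_p}^2\det(\cdots)\sum_i\theta_i^2(\cdot)$; so your sketch is at the same level of rigour as the source, and your explicit flag is a fair and useful observation rather than a defect (one can check that the stabilisation of $\{0,4,8,12\}$ forces $B\equiv 0\bmod 2$, which together with the symplectic relations and the parity built into Lemma~\ref{lemclasse} is what makes the factors line up, exactly as you anticipate).
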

\begin{proof}
For $\Omega\in\Gamma(2,4)\bb\HH_2$ to be in $\mathcal{L}_p'$ means that there exists $M\in\Gamma (2,4)/(\Gamma (2,4)\cap\Gamma_0(p))$ satisfying $\theta_0(pM\Omega/2)=0$.
Let $M'\in\Gamma (2,4)/(\Gamma (2,4)\cap\Gamma_0(p))$ be such that $(M'\gamma)M^{-1}\in\Gamma_0(p)$. There exists then $\gamma''\in\Gamma_0(p)$ with $M'\gamma=\gamma''M$. We have, using the 
duplication formula (Proposition~\ref{dupl})
\[
\theta_0(pM'\gamma\Omega/2)=\theta_0(p\gamma''M\Omega/2)=\theta_0(\gamma''_p(pM\Omega)/2)=\sum_{i\in\{0,4,8,12\}}\theta_i^2(\gamma''_p(pM\Omega)).
\]
Moreover,  $\gamma''_p=(M'\gamma M^{-1})_p$ is in the same equivalence class as $\gamma_p$, namely $\gamma'$ by hypothesis (recall that $\Gamma(2,4)$ is a normal subgroup). 
The action of $\gamma'$  sends $\{0,4,8,12\}$ to itself, so that
\[\sum_{i\in\{0,4,8,12\}}\theta_i^2(\gamma''_p(pM\Omega))=\zeta_{\gamma''_p}^2\det(...)\sum_{i\in\{0,4,8,12\}}\theta_i^2(pM\Omega)
=\zeta_{\gamma''_p}^2\det(...)\theta_0(pM\Omega/2)=0.\]
\end{proof}

We have proved
\begin{theo}\label{ThSym}
Let $p>2$ be a prime number. The modular polynomials for $b'_1,b'_2$ and $b'_3$ satisfy
\[
\Phi_{1,p}(X,b'_1,b'_2,b'_3)=\Phi_{1,p}(X,b'_1,b'_3,b'_2)\qquad\textrm{and}\qquad\Psi_{2,p}(X,b'_1,b'_2,b'_3)=\Psi_{3,p}(X,b'_1,b'_3,b'_2).
\]
Morover, the polynomial $L'_p$ is symmetric.
\end{theo}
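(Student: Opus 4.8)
The plan is to establish the two stated identities and the symmetry of $L'_p$ by exhibiting, for each required permutation of the functions $b'_1,b'_2,b'_3$, an explicit matrix in $\Gamma_2$ realizing that permutation on both the $b'_i$ and the $b'_{i,p}$, and then to invoke minimality of the modular polynomials. Concretely, I would proceed as follows. \textbf{Step 1 (the swap $b'_2\leftrightarrow b'_3$ on the isogeny side).} Take the matrix $\gamma=\begin{ppsmallmatrix}1&-3&-2&2\\0&1&2&0\\0&0&1&0\\0&-4&-5&1\end{ppsmallmatrix}\in\Gamma_2/\Gamma(2,4)$, which by direct computation with Proposition~\ref{funceq} satisfies $(b'^\gamma_1,b'^\gamma_2,b'^\gamma_3)=(b'_1,b'_3,b'_2)$; thus $b'_1$ is fixed and $b'_2,b'_3$ are interchanged. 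For the prime $p>2$, pick $\gamma'\in\Gamma(2,4)$ with $\gamma\gamma'\in\Gamma_0(p)$ (existence follows from surjectivity of $\mathrm{Sp}_4(\Z)\to\mathrm{Sp}_4(\Z/4p\Z)$ and the Chinese remainder theorem, plus, for $p\equiv3\bmod4$, one can arrange the parity conditions of Lemma~\ref{lemclasse}). By Lemma~\ref{lemclasse}, $(\gamma\gamma')_p$ lies in the same class modulo $\Gamma(2,4)$ as $\gamma$, so the action of $\gamma\gamma'$ on the $b'_{i,p}$ is the same permutation, giving $(b'^{\gamma\gamma'}_{1,p},b'^{\gamma\gamma'}_{2,p},b'^{\gamma\gamma'}_{3,p})=(b'_{1,p},b'_{3,p},b'_{2,p})$.

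\textbf{Step 2 (minimality argument for $\Phi_{1,p}$ and $\Psi_{\ell,p}$).} Apply this action inside the defining identity $\Phi_{1,p}(b'_{1,p}(\Omega),b'_1(\Omega),b'_2(\Omega),b'_3(\Omega))=0$ valid for all $\Omega\in\HH_2$. Substituting $\gamma\gamma'\Omega$ for $\Omega$ and using the computed permutations yields $\Phi_{1,p}(b'_{1,p}(\Omega),b'_1(\Omega),b'_3(\Omega),b'_2(\Omega))=0$ for all $\Omega$; since $\Phi_{1,p}(X,b'_1,b'_2,b'_3)$ is the minimal (monic) polynomial of $b'_{1,p}$ over $\C_{\Gamma(2,4)}$ and the polynomial $\Phi_{1,p}(X,b'_1,b'_3,b'_2)$ is monic of the same degree with the same root, the two coincide. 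This gives the first identity. For the second, differentiate and use $b'_{\ell,p}=\Psi_{\ell,p}(b'_{1,p})/\Phi'_{1,p}(b'_{1,p})$: applying the same action sends $\ell=2$ to the relation $b'_{3,p}=\Psi_{2,p}(b'_{1,p},b'_1,b'_3,b'_2)/\Phi'_{1,p}(b'_{1,p},b'_1,b'_2,b'_3)$, where we already know $\Phi_{1,p}$ (hence $\Phi'_{1,p}$) is symmetric in $b'_2,b'_3$; comparing with $b'_{3,p}=\Psi_{3,p}(b'_{1,p},b'_1,b'_2,b'_3)/\Phi'_{1,p}(b'_{1,p},b'_1,b'_2,b'_3)$ and using that both $\Psi$'s have degree $<\deg\Phi_{1,p}$ forces $\Psi_{2,p}(X,b'_1,b'_3,b'_2)=\Psi_{3,p}(X,b'_1,b'_2,b'_3)$, which is the claimed relation. (Strictly, one argues polynomial equality from equality of the rational functions of the three algebraically independent $b'_i$.)

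\textbf{Step 3 (symmetry of $L'_p$).} By Steps 1--2 the common denominator $L'_p$ is symmetric in $b'_2,b'_3$. For full symmetry it remains to handle one transposition moving $b'_1$, say $b'_1\leftrightarrow b'_2$ via $\gamma_{410}=\begin{ppsmallmatrix}0&-1&0&0\\-1&0&0&0\\0&0&0&-1\\0&0&-1&0\end{ppsmallmatrix}$ (and $b'_1\leftrightarrow b'_3$ via $\gamma_{8316}$). The action of $\gamma_{410}$ on $L'_p\in\Q[b'_1,b'_2,b'_3]$ produces an irreducible polynomial; Lemma~\ref{memeHumbert} shows its zero set is still contained in $\mathcal{L}'_p$ (one checks that the relevant $\gamma'$ acts on the theta indices preserving $\{0,4,8,12\}$), and since $\mathcal{L}'_p$ is irreducible of the correct dimension and $L'_p$ is, up to scalar, the unique irreducible polynomial cutting it out, the transformed polynomial equals $L'_p$. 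Combined with the $b'_2\leftrightarrow b'_3$ symmetry, these transpositions generate $S_3$, so $L'_p$ is fully symmetric.

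The main obstacle is not conceptual but the explicit verification that the listed matrices $\gamma$, $\gamma'$, $\gamma_{410}$, $\gamma_{8316}$ do what is claimed: one must run Proposition~\ref{funceq} (tracking characteristics, the permutation of even indices, and the fourth-roots-of-unity factors that drop out of the quotients $b'_i$ and of the $b_i$ via~(\ref{bequib})) to confirm the asserted permutations of $(b'_1,b'_2,b'_3)$, and separately check the congruence conditions placing $\gamma_p$ (or $(\gamma\gamma')_p$) in the right class via Lemma~\ref{lemclasse} and the index/parity bookkeeping of Lemma~\ref{memeHumbert}. A secondary subtlety is the passage, in Step 2, from equality of the two rational expressions (as functions on $\HH_2$, equivalently in the field $\C(b'_1,b'_2,b'_3)$) to equality of the polynomials in $X$ with coefficients in that field — this uses that the $b'_i$ are algebraically independent generators of $\C_{\Gamma(2,4)}$ over $\C$, together with uniqueness of the minimal polynomial and of the remainder of degree $<\deg\Phi_{1,p}$.
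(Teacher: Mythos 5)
Your proposal follows the paper's argument essentially step for step: the same explicit matrix $\gamma$ realizing the $b'_2\leftrightarrow b'_3$ swap, the completion by $\gamma'\in\Gamma(2,4)$ into $\Gamma_0(p)$ together with Lemma~\ref{lemclasse} and surjectivity of $\textrm{Sp}_4(\Z)\to\textrm{Sp}_4(\Z/4p\Z)$, the minimality argument for $\Phi_{1,p}$ and the derivative identity $b'_{\ell,p}=\Psi_{\ell,p}(b'_{1,p})/\Phi'_{1,p}(b'_{1,p})$ for $\Psi_{2,p}=\Psi_{3,p}$, and finally the matrices $\gamma_{410},\gamma_{8316}$ together with Lemma~\ref{memeHumbert} and irreducibility for the full symmetry of $L'_p$. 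The approach and the key ingredients are identical to the paper's.
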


\subsection{Relations modulo 2 and 4}

We study now the different relations modulo $2$ and $4$ between the exponents of the $b'_i$ in each coefficient.
Consider the numerator
of the $\ell$-th coefficient of the $m$-th modular polynomial for $m=1$ or $2$ (we have seen that the third polynomial can be deduced from the second one), whose
monomials are of the form $c_{ijk}b'^i_1b'^j_2b'^k_3$. We have found that for $p=3,5$ and $7$, if $c_{ijk}\ne 0$, then
\begin{equation}\label{rel24}
\begin{split}
&i   \equiv \ell+m+1  \bmod 2  \\
&i+j \equiv -p\ell    \bmod 4  \\
&j+k \equiv p(m-1) \bmod 4  \\
\end{split}
\end{equation} 
and with similar notation, we always have 
\begin{equation}\label{rel24Den} i\equiv j\equiv k\equiv 0\bmod 2 \qquad\textrm{and}\qquad i+j\equiv j+k\equiv 0\bmod 4 \end{equation}
for the denominators.
These equalities are determined by the existence of some matrices $\gamma$ with the property that $b'_i(\gamma\Omega)=\imath^{\alpha_i} b'_i(\Omega)$ and $b'_{i,p}(\gamma\Omega)=\imath^{\beta_i} b'_{i,p}(\Omega)$ with $\alpha_i$ and $\beta_i$ in $\{0,1,2,3\}$.
We will denote the action of such matrices by the vector  $(\alpha_1,\alpha_2,\alpha_3,\beta_1,\beta_2,\beta_3)$.

With the same arguments as before, we deduce that such an action produces a polynomial with the same roots and degrees as $\Phi_{1,p}(X)$ (resp. $\Psi_{2,p}(X)$) which is then $\Phi_{1,p}(X)$ (resp. $\Psi_{2,p}(X)$)
up to a constant. As $p^3+p^2+p+1\equiv 0\bmod 4$ for any prime $p>2$ and as the leading coefficient of $\Phi_{1,p}(X)$ is $X^{p^3+p^2+p+1}$, we conclude that such an action does not change $\Phi_{1,p}(X)$.
This is not the case of $\Psi_{2,p}(X)$ which is of degree $p^3+p^2+p$ in $X$. 

The matrix \[\gamma_{134}=\begin{ppsmallmatrix}-1&0&0&0\\0&-1&0&0\\2&1&-1&0\\1&0&0&-1\end{ppsmallmatrix}\] acts by $(-1,1,1,-1,1,1)$ for all $p$ by the functional equation of Proposition~\ref{funceq} and Lemma~\ref{lemclasse}. 

Using lemma~\ref{memeHumbert} shows that this matrix preserves the Humbert component (and $(\gamma_{134})_p$ is in the same equivalence class of $\gamma_{134}$ by Lemma~\ref{lemclasse}). Thus we obtain a polynomial with the same roots and degrees as $L'_p$: it is a multiple of $L'_p$.
As the latter is irreducible, it contains at least one monomial where there is not $b'_1$ so that the matrix does not  change this monomial and the constant is thus $1$.
As $L'_p$ is symmetric, we deduce that it has even exponents in $b'_1$, $b'_2$ and $b'_3$.

If we assume Conjecture \ref{conjden}, then we just proved that the action of $\gamma_{134}$ on the numerators do not depend on its action on the denominator. 
Then, on the numerators of $\Phi_{1,p}$, the action of $\gamma_{134}$ shows that $i+\ell$ is always even.
For $\Psi_{2,p}(X)$, we have to determine the constant which appears.
The  leading coefficient of this polynomial is $\sum_{\gamma\in C_p}b'^{\gamma}_{2,p}X^{p^3+p^2+p}$. Consider now the minimal polynomial $\prod_{\gamma\in C_p}(X-b'^\gamma_{2,p})$ of $b'_2$
and note that it is invariant by the preceding action, which is thus also the case of $\sum_{\gamma\in C_p}b'^{\gamma}_{2,p}$. We deduce that the constant is $-1$ (because of the $X^{p^3+p^2+p}$), namely
\[\sum_{\gamma\in C_p}b'^{\gamma\gamma_{134}}_{2,p}(b'^{\gamma_{134}}_{1,p})^{p^3+p^2+p}=-
\sum_{\gamma\in C_p}b'^{\gamma}_{2,p}(b'_{1,p})^{p^3+p^2+p}.\]
We have thus shown the first of the three equalities of (\ref{rel24}).

For the other two, we have to consider the matrices 
\[\gamma_{141}=\begin{ppsmallmatrix}-1&0&0&0\\ 0&-1&0&0\\1&1&-1&0\\1&1&0&-1 \end{ppsmallmatrix}\quad\textrm{ and } \quad
\gamma_{21}=\begin{ppsmallmatrix} -1&0&0&0\\0&-1&0&0\\0&0&-1&0\\0&1&0&-1 \end{ppsmallmatrix}.\] 
Their action for $p\equiv 1\bmod 4$ are respectively $(\imath,\imath,1,\imath,\imath,1)$ and $(1,\imath,\imath,1,\imath,\imath)$ and 
for $p\equiv 3\bmod 4$
it is $(\imath,\imath,1,-\imath,-\imath,1)$ and $(1,\imath,\imath,1,-\imath,-\imath)$ because in 
this case $(\gamma_{141})_p$ and $(\gamma_{21})_p$ are equivalent to  
\[\gamma_{1886}=\begin{ppsmallmatrix} -1&0&0&0\\0&-1&0&0\\-1&1&-1&0\\1&-1&0&-1 \end{ppsmallmatrix}\quad\textrm{ and }\quad
\gamma_{155}=\begin{ppsmallmatrix} -1&0&0&0\\0&-1&0&0\\0&0&-1&0\\0&3&0&-1 \end{ppsmallmatrix}.\]

On $L'_p$, the action of $\gamma_{141}$ does not change the Humbert component by Lemma~\ref{memeHumbert},
so that $L'_p(\imath b'_1,\imath b'_2,b'_3)$ is a multiple of $L'_p$. 

As $L'_p$ is irreducible, there is a monomial without $b'_1$, which
is then of the form $cb'^i_2b'^j_3$ for some constant $c$. We have already shown that $i\equiv j\equiv 0\bmod 2$ so that $c(\imath b'_2)^ib'^j_3=\pm cb'^i_2b'^j_3$. If it is equal,
then the action of $\gamma_{141}$ fixes $L'_p$. Otherwise $i\equiv 2\bmod 4$ and as $L'_p$ is symmetric, we also have the monomials $cb'^i_1b'^j_2$ and $cb'^i_3b'^j_2$.
Now look at the latter: $cb'^i_3(\imath b'_2)^j=\pm cb'^i_3b'^j_2$. If it is not equal, then $j\equiv 2\bmod 4$ and then $c(\imath b'_1)^i(\imath b'_2)^j=cb'^i_1b'^j_2$. In all cases, 
the action of $\gamma_{141}$ fixes $L'_p$. We can adapt this proof on $\gamma_{121}$ and deduce (\ref{rel24Den}).

We use similar arguments on $\Phi_{1,p}(X)$ and $\Psi_{\ell,p}(X)$ to prove (\ref{rel24}).
Thus we obtain
\begin{theo}\label{ThDeg}
Let $p>2$ be a prime number. Then the polynomial $L'_p$ satisfy (\ref{rel24Den}). Moreover, if we assume the Conjecture~\ref{conjden}, the numerators of the two first modular polynomials
verifies (\ref{rel24}).
\end{theo}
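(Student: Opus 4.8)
The plan is to obtain every congruence in (\ref{rel24Den}) and (\ref{rel24}) from the single device already used for Theorem~\ref{ThSym}: the existence of a matrix $\gamma\in\Gamma_2$ acting diagonally on the generators by fourth roots of unity, $b'_i(\gamma\Omega)=\imath^{\alpha_i}b'_i(\Omega)$ and $b'_{i,p}(\gamma\Omega)=\imath^{\beta_i}b'_{i,p}(\Omega)$, abbreviated $(\alpha_1,\alpha_2,\alpha_3,\beta_1,\beta_2,\beta_3)$. Substituting $\gamma\Omega$ for $\Omega$ in the identity $L'_p=0$ (resp. in $\Phi_{1,p}(b'_{1,p},b'_1,b'_2,b'_3)=0$, resp. in $b'_{\ell,p}\,\Phi'_{1,p}(b'_{1,p})=\Psi_{\ell,p}(b'_{1,p})$) turns the polynomial into a scalar multiple of itself; pinning down that scalar from a distinguished monomial or from the leading coefficient in $X$ then forces the congruences on the exponents. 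The matrices needed are the ones already displayed: $\gamma_{134}$, $\gamma_{141}$, $\gamma_{21}$, with $\gamma_{1886}$ and $\gamma_{155}$ taking over the role of $(\gamma_{141})_p$, $(\gamma_{21})_p$ when $p\equiv 3\bmod 4$.

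First I would settle (\ref{rel24Den}) for $L'_p$, since this is what makes the numerator step clean. Because $L'_p$ is irreducible and cuts out $\mathcal{L}'_p$, it suffices to check that $\gamma_{134}$, $\gamma_{141}$, $\gamma_{21}$ each map $\mathcal{L}'_p$ into itself; this is exactly Lemma~\ref{memeHumbert}, whose hypothesis (that the induced action on the theta constants preserves $\{0,4,8,12\}$) is read off from Proposition~\ref{funceq}, together with Lemma~\ref{lemclasse} to see that $(\gamma)_p$ lies in the class of $\gamma$ (resp. of $\gamma_{1886}$, $\gamma_{155}$). Then $L'_p(\imath^{\alpha_1}b'_1,\imath^{\alpha_2}b'_2,\imath^{\alpha_3}b'_3)$ is a constant times $L'_p$; combining irreducibility with the symmetry of $L'_p$ (Theorem~\ref{ThSym}), I would pick a monomial $cb'^i_2b'^j_3$ of $L'_p$ free of $b'_1$, examine it together with its symmetric partners $cb'^i_1b'^j_2$ and $cb'^i_3b'^j_2$, and a short case split shows the constant is forced to be $1$. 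Applied to $\gamma_{134}=(-1,1,1,-1,1,1)$ this gives $i\equiv j\equiv k\equiv 0\bmod 2$; applied to $\gamma_{141}$ (action $(\imath,\imath,1,\ldots)$ on $b'_1,b'_2$) and $\gamma_{21}$ (action $(1,\imath,\imath,\ldots)$ on $b'_2,b'_3$) it gives $i+j\equiv 0$ and $j+k\equiv 0\bmod 4$, which is (\ref{rel24Den}).

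Next I would turn to the numerators, now assuming Conjecture~\ref{conjden}, so that the denominator of every coefficient of $\Phi_{1,p}$ and $\Psi_{\ell,p}$ is exactly $L'_p$, whose transformation law has just been determined. Since $b'_1,b'_2,b'_3$ are algebraically independent and $\Phi_{1,p}$ is the monic minimal polynomial of $b'_{1,p}$ over $\C(b'_1,b'_2,b'_3)$, replacing $\Omega$ by $\gamma\Omega$ produces a monic degree-$q$ polynomial in $X$ ($q=p^3+p^2+p+1$) vanishing at $b'_{1,p}$, hence equal to $\Phi_{1,p}$; comparing leading terms and using $q\equiv 0\bmod 4$ gives the exact identity $\Phi_{1,p}(\imath^{\beta_1}X,\imath^{\alpha_1}b'_1,\imath^{\alpha_2}b'_2,\imath^{\alpha_3}b'_3)=\Phi_{1,p}(X,b'_1,b'_2,b'_3)$. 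Dividing out the known transformation of $L'_p$ and extracting the coefficient of $X^\ell$ translates this into a relation on the exponents of the numerator $A_{1,\ell}$: $\gamma_{134}$ yields $i\equiv\ell\bmod 2$, the $m=1$ instance of $i\equiv\ell+m+1\bmod 2$, while $\gamma_{141}$ and $\gamma_{21}$ (or $\gamma_{1886}$, $\gamma_{155}$ when $p\equiv 3\bmod 4$) yield $i+j\equiv -p\ell$ and $j+k\equiv 0\equiv p(m-1)\bmod 4$. Here it is worth noting that the $p\bmod 4$ dichotomy in the action vectors is precisely what makes the uniform formula $-p\ell$ appear: for $p\equiv 1$ the power of $\imath$ on $b'_{1,p}$ equals that on $b'_1$, for $p\equiv 3$ it is the opposite one, and the two cases collapse to the same congruence after accounting for $(-\imath)^\ell=\imath^{-\ell}$.

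For $m=2$ the same argument runs on $\Psi_{2,p}(X)$, with the single wrinkle that $\Psi_{2,p}$ has $X$-degree $q-1\equiv 3\bmod 4$, so the action returns $\Psi_{2,p}$ only up to a constant, which must be computed from its leading coefficient $\sum_{\gamma\in C_p}b'^{\gamma}_{2,p}$; this sum is a coefficient of the monic minimal polynomial of $b'_{2,p}$, itself invariant under $\gamma$ by the same minimal-polynomial reasoning, so the constant is read off from the power of $\imath$ picked up by $X^{q-1}$ (for instance $-1$ for $\gamma_{134}$, since $q-1$ is odd). This shifts the parity to $i\equiv\ell+1\bmod 2$, matching $i\equiv\ell+m+1\bmod 2$ for $m=2$, and likewise produces $i+j\equiv -p\ell$ and $j+k\equiv p\equiv p(m-1)\bmod 4$; the relations for $\Psi_{3,p}$ follow from Theorem~\ref{ThSym}. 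I expect the genuine work to be the bookkeeping in the second and third paragraphs: verifying, via Proposition~\ref{funceq} and Lemma~\ref{lemclasse}, the precise six-entry action vectors of $\gamma_{134}$, $\gamma_{141}$, $\gamma_{21}$, $\gamma_{1886}$, $\gamma_{155}$ — in particular getting the $p\bmod 4$ split and the exact powers of $\imath$ on both the $b'_i$ and the $b'_{i,p}$ right, since one misplaced sign or root of unity corrupts all three congruences at once.
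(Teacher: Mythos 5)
Your proposal is correct and follows essentially the same route as the paper: you use the same matrices $\gamma_{134}$, $\gamma_{141}$, $\gamma_{21}$ (and their replacements $\gamma_{1886}$, $\gamma_{155}$ for $p\equiv 3\bmod 4$), the same irreducibility-plus-symmetry argument to pin the constant on $L'_p$ to $1$, the same minimal-polynomial reasoning on $\Phi_{1,p}$ with $q\equiv 0\bmod 4$, and the same leading-coefficient computation on $\Psi_{2,p}$. The only difference is one of exposition: you spell out more explicitly how the $p\bmod 4$ split in the action vectors collapses to the uniform congruence $i+j\equiv -p\ell\bmod 4$, which the paper leaves implicit.
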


\section{Implementation}\label{impl}
\subsection{External packages}
Dupont presented two algorithms to compute theta functions. 
The first  one uses the definition as sums of exponentials and it computes $\theta_i(\Omega)$ for $i=0,1,2,3$, $\Omega\in\FF$
at precision $N$ with a complexity of $O(\mathcal{M'}(N)N)$.  
The second one uses  Newton lifts and the Borchardt mean and is in $O(\mathcal{M'}(N)\log(N))$ under Conjecture~\ref{conj}. It computes
$\theta_i^2(\Omega)/\theta_0^2(\Omega)$, $i=1,2,3$.
These algorithms have been studied and implemented by Enge and Thomé in \cite{cmh,cmhlogiciel}. Using finite differences, they proved that the complexity to compute the squares of the theta constants
is in $O(\mathcal{M'}(N)\log(N))$ under conjecture \ref{conj}.

We used the cmh library written in C for the evaluation of the square of the theta functions (we also recovered from it the implementation of Mestre's algorithm
and some other functions that were already written in GP) and we used the pari-gnump software \cite{parignump} for switching between number types from the GNU 
multiprecision ecosystem (GMP, MPFR and MPC \cite{gmp,mpfr,mpc}) and corresponding types in Pari/GP to be able to use the algorithm of cmh with GP.

There are two reasons for which the algorithms to compute the theta constants are defined for $\Omega$ only in the fundamental domain. The first one is for the convergence and the second
is because we can use the functional equation of Proposition~\ref{funceq} to obtain the theta constants at $\Omega\in\HH_2$ from the theta constants at $\Omega'\in\mathcal{F}_2$.
We have implemented an algorithm to compute the squares of the theta constants for any matrix in $\HH_2$ with GP \cite{Pari}.


For Algorithm \ref{algo}, we need a method to reduce some $\Omega\in\HH_2$ into the fundamental domain.
 We implemented the standard method (see \cite{Gott,Dupont}).
We also used the code of Pascal Molin \cite{Molin} to compute $\Omega\in\HH_2$ corresponding to a given hyperelliptic curve equation.

Moreover, we have to know the cosets of $\Gamma(2,4)/(\Gamma_0(p)\cap\Gamma(2,4))$ for some primes $p$.
They are naturally calculated beforehand.
A generalization of Algorithm $2$ of \cite{Dupont} to dimension $2$ allows one to compute, 
for subgroups $\Gamma'\subset\Gamma$ of $\Gamma_2$,
 the representatives of the classes of $\Gamma/\Gamma'$ and a set of generators of $\Gamma'$ from
a set of generators of $\Gamma$ and from a function which decides if a matrix lies in $\Gamma'$ or not. 
We apply it twice: first on $\Gamma=\Gamma_2$ and $\Gamma'=\Gamma(2,4)$, then
on $\Gamma=\Gamma(2,4)$ and $\Gamma'=\Gamma_0(p)\cap\Gamma(2,4)$. Another solution consists in using Proposition 10.1 of \cite{Dupont} which provides a set 
of representatives of $\Gamma_2/\Gamma_0(p)$ for all $p\ge 2$. 
We have to multiply each representative by a matrix in $\Gamma_0(p)$ such that the resulting matrix is in $\Gamma(2,4)$, which is
possible by the Chinese remainder theorem.


\subsection{Evaluation and interpolation}
Until now we have presented the algorithm from a theoretical point of view.
In practice, we proceed as follows. 
Since we want to use fast interpolation, it is necessary to know the degrees of the coefficients in the three invariants
$f_1$, $f_2$ and $f_3$.
For example, let $F(f_1,f_2,f_3)$ be one of the coefficients we want to compute.
To obtain the total degree of the numerator and of the denominator of $F$,
it is enough to compute the matrices $\Omega$ in the Siegel space with Algorithm~\ref{algo} 
such that $(f_1(\Omega)$, $f_2(\Omega)$, $f_3(\Omega))=$ $(x_i,x_iy,x_iz)$ for some $x_i$ and fixed $y$ and $z$, to evaluate
$F(x_i,x_iy,x_iz)$  and then to do the interpolation of a univariate rational fraction. 
This also gives upper bounds for the degrees in $f_1$,
$f_2$ and $f_3$. To obtain the degrees in $f_1$ (and similarly in the others), we can compute $F(x_i,y,z)$ and interpolate,
but this will not give the a correct answer every time (even if we assume that the precision is correct and that we have enough $x_i$).
 Indeed,  some simplifications may occur.
Thus, to be sure of the result, it is preferable to evaluate and interpolate for many values of $y$ and $z$ and also for $F(X+r,y+s,z+t)$ for some values of $r$, $s$ and $t$.

Once we have this information, we have two choices for how to proceed.
The first consists in doing sufficiently many evaluations to compute all the coefficients (in $X$) of the three modular polynomials with  interpolation 
of rational fractions. An evaluation means the computation of the modular polynomials at $\Omega$ such that
 $(f_1(\Omega),f_2(\Omega),f_3(\Omega))$ is of the form $(x_i,x_iy_j,x_iz_k)$. 
Otherwise we focus first on only one coefficient (the one with the lowest total degree)
to compute the common denominator and then we do sufficiently many evaluations (here of the form $(x_i,y_j,z_k)$) to compute the 
other coefficients using interpolations of multivariate polynomials. We can speak about polynomials because we can multiply
each evaluation by the evaluation of the denominator (and in the case of the Streng invariants, also by an exponent of $i_3$).

In the first case, the number of evaluations will depend on the maximal total degree  of the three polynomials, while in the
second case, the total degree will intervene only for the coefficient with lowest degrees. Moreover, the precision needed to
interpolate rational fractions is greater than those to interpolate polynomials (and the complexity of an evaluation of the
modular polynomials at some matrices of $\HH_2$ depends on the precision) and it is easier to interpolate polynomials
than rational fractions. For the second choice, the degree tables suggest focusing on the coefficient of highest degree (in $X$) of $\Phi_{1,p}(X)$.


One can choose to take integer values for the invariants. The matrix $\Omega$ with these invariants and also the invariants of the isogenous varieties will
not take integer values, but each coefficient of the evaluated modular polynomials will be a rational number. Thus it could be possible at each evaluation to find these rational numbers
using continued fractions (if the working precision is good enough). The interpolation phase could then be done using exact values. 
However when doing this, the precision needed in practice will increase and the time of evaluation too. 
It is preferable to take floating point values for the invariants and reconstruct the rational numbers once the polynomials 
have been interpolated at the working precision to find the exact coefficients.

\subsection{Timings}
Note that in the evaluation there are two steps: given $(f_1(\Omega),f_2(\Omega),f_3(\Omega))$ find $\Omega$ and then
evaluate the modular polynomials at $\Omega$. The last one takes most of the time  (at large enough precision). For  example for $p=5$ 
and $7$ at precision $1000$ decimal digits it takes $0.5$ seconds to compute $\Omega$ from the $b'_i(\Omega)$ and
 the computation of the two polynomials $\Phi_{1,p}(X,b'_1(\Omega),b'_2(\Omega),b'_3(\Omega))$ and 
$\Psi_{2,p}(X,b'_1(\Omega),b'_2(\Omega),b'_3(\Omega))$ take  $12$ and $30$ seconds for respectively $p=5$ and $p=7$ (this difference is due to the number of isogenies: $156$ for one and $400$ for the other). 


We focus now on the computation of the modular polynomials with Streng invariants (recall the results of Section \ref{dataStreng}). 
We proceed with the second method which is not always faster (because it requires two evaluation steps), but it has the advantage of providing the denominator
which is the origin of most of the difficulties when computing modular polynomials. Moreover, we do the interpolation of univariate rational fraction with linear algebra because it is fast enough. 

In level $2$, the largest total degree, of the numerator of the coefficient of degree $14$ of $\Phi_{1,2}(X)$ is $9$ 
and that of the denominator $D'_2$ is $7$. To compute the denominator it is enough to do $(9+7+2)(5+1)(4+1)=540$ evaluations.
Once we have computed them, we do $(33+1)(17+1)(16+1)=10404$ evaluations to compute the numerators (see Table \ref{degStr2}). All of this can be done 
at a precision of $100$ decimal digits. An evaluation takes around $1.33$ second so that the denominator can be computed
in around $12$ minutes and all the polynomials in $4$ hours (on one processor).

In level $3$, the total degrees are $35$ for both the numerator (of the coefficient of degree $39$ of $\Phi_{1,3}$) and the denominator.
The denominator can be computed with $(35+35+2)(20+1)(17+1)=27216$ evaluations in $17$ hours at precision $300$ and then
all the numerators with $(92+1)(52+1)(49+1)=246450$ evaluations (see Table \ref{degStr3}) in around $30$ days at precision $1000$. (The difference
in precision here comes from the fact that the integers of the denominator are much smaller than the integers of the numerators).
The interpolation phase takes around $1$ hour.


To compute the modular polynomials with the $b'_i$, we can use the results found in Sections \ref{databi} and \ref{analysis}. 
In particular, we only have to compute the first two modular polynomials.

For $p=3$, the total degrees are $25$ and $24$ for the numerator and the denominator of the $39$-th coefficient.
It takes around $(25+24+2)(12+1)(12+1)/32\approx 270$ evaluations to obtain the denominator and  around 
$(40+1)(19+1)(18+1)/32 \approx 487$ for the numerators (see Table \ref{degbi3}).
We used $100$ decimal digits for the precision and then an evaluation takes approximately  $0.6$ seconds so that the (two and thus the three) modular polynomials 
can be obtained in less than $10$ minutes (the interpolation phase is negligible). 

For $p=5$, the total degrees are $121$ and $120$ for the numerator and the denominator of the $155$-th coefficient.
The theoretical numbers of evaluations for the denominator and the numerators are  $(121+120+2)(72+1)(72+1)/32< 40500$ and $(156+1)(97+1)(94+1)/32< 46000$ (see Table \ref{degbi57}).
They can be done at precision $1000$ decimal digits where each evaluation takes roughly $12$ seconds.
The polynomials can be calculated in less than $12$ days (on one processor). The interpolation can be done in less than $2$ hours.

For $p=7$, we have computed at first the common denominator because of memory space (the two first polynomials fill $29$ GB). 
Moreover we found that the leading coefficient of the denominators in $b'_1$ is respectively $2^{10}b'^6_2b'^6_3b'^{10}_1$ and
$2^{70}b'^{10}_2b'^{10}_3b'^{70}_1$, so that we conjectured it would be of the same kind for $p=7$. Through some experimentations, we found that it was $2^{226}b'^{38}_2b'^{38}_3b'^{226}_1$.
Knowing this monomial allows one to interpolate as explained in the second paragraph after Remark \ref{simplpol}, which reduces the number of evaluations because this number depends on the degree in $b'_1$
instead of the total degree.


The degrees of the $399$-th coefficient are $233$ and $226$ (and the total degrees are $337$ and $336$ so that the gain is significant).
The number of evaluations for the denominator was around $(233+226+2)(226+1)(226+1)/32< 727000$ and for 
the numerators of the two modular polynomials around $(400+1)(279+1)(276+1)/32< 972000$ (see Table \ref{degbi57}).
 For the denominator, we managed to compute it in less than $700$ days at precision $2000$ and for the numerators in 
around $2000$ days at precision $3000$. The interpolation time was around a week. It is negligible compared to the evaluation time.

Finally note that each evaluation is independent of the others so that the computation of modular polynomials
is highly parallelizable. The interpolation of a coefficient is independent of the interpolation of the others so that the interpolation
step is also parallelizable. Moreover, it is possible to parallelize the interpolation of a single coefficient.

\section{Examples of isogenous curves}

The main purpose of the modular polynomials is to find hyperelliptic curves with isogenous Jacobians, in particular over a finite field.
We give some examples with the different polynomials we have computed. Note that the algorithm we have presented is heuristic 
because we have no bounds on the precision loss and we have no proof that the polynomials we found are correct.
We could do interval arithmetic; what we do instead is to heuristically check  for correctness on additional random values not yet used during the evaluation/interpolation algorithm:
for some $\Omega\in\HH_2$, we have to verify that 
\[\Phi_{1,p}(f_{1,p}(\Omega),f_1(\Omega) ,f_2(\Omega) ,f_3(\Omega))=0\] and that for $\ell=2,3$
\[f_{\ell,p}(\Omega)=\Psi_{\ell,p}(f_{1,p}(\Omega),f_1(\Omega) ,f_2(\Omega) ,f_3(\Omega))/\Phi'_{1,p}(f_{1,p}(\Omega),f_1(\Omega) ,f_2(\Omega) ,f_3(\Omega)).\] 
With one high precision computation, one can be virtually certain that the result is correct.

The Jacobians of the following curves are $(3,3)$-isogenous varieties. We computed the curves using the modular polynomials with Streng invariants.
The first ones over $\F_{5261}$:
\begin{flushleft}\begin{tabular}{lll}
   $Y^2$&$=$&$272X^5 + 4278X^4 + 4297X^3 + 4063X^2 + 1069X + 2998$,\\$Y^2$&$=$&$695X^5 + 2322X^4 + 3115X^3 + 4588X^2 + 1453X + 655$
  \end{tabular}\end{flushleft}
and the following ones over  $\F_{2534267893}$:
  \begin{flushleft}\begin{tabular}{lll}
   $Y^2$&$=$&$1774507961X^6 + 48872812X^5 + 2028583210X^4 + 1092030439X^3 +$\\ &&$ 671225738X^2 + 2233670825X + 608155867$,\\$Y^2$&$=$&$1927466494X^6 + 2286039407X^5 + 1720123333X^4 + 87910848X^3 + $\\ &&$2422852850X^2 + 183139891X + 825611194$.
  \end{tabular}\end{flushleft}
We also give two examples of curves with $(5,5)$-isogenous Jacobians computed using the modular polynomials with the $b'_i$. Over $\F_{101}$:

\begin{flushleft}\begin{tabular}{lll}
   $Y^2$&$=$&$27X^5 + 71X^4 + 91X^3 + 59X^2 + 5X + 14$,\\$Y^2$&$=$&$29X^5 + 26X^4 + 38X^3 + 20X^2 + 7X + 51$
  \end{tabular}\end{flushleft}
and over $\F_{4294967311}$:
\begin{flushleft}\begin{tabular}{lll}
   $Y^2$&$=$&$2420332800X^5 + 3653091983X^4 + 2536585478X^3 + 2805510580X^2 +$\\ &&$ 159741347X + 2690010753$,\\
   $Y^2$&$=$&$4076826784X^5 + 2616936853X^4 + 3748957676X^3 + 1209100179X^2 +$\\ &&$ 3172892980X + 1266950302$.
  \end{tabular}\end{flushleft}
Finally, we give two pairs of curves with $(7,7)$-isogenous Jacobians, computed  using the modular polynomials with the $b'_i$. Over $\F_{10009}$:
\begin{flushleft}\begin{tabular}{lll}
   $Y^2$&$=$&$4826X^5 + 471X^4 + 2876X^3 + 5411X^2 + 7948X + 1308$,\\
   $Y^2$&$=$&$7218X^5 + 7699X^4 + 7011X^3 + 7103X^2 + 1845X + 4087$
  \end{tabular}\end{flushleft}
and over $\F_{3452678353}$:
\begin{flushleft}\begin{tabular}{lll}
   $Y^2$&$=$&$393356368X^5 + 1698662093X^4 + 471351782X^3 + 448279016X^2 +$\\ &&$1342046779X + 3241061457$,\\
   $Y^2$&$=$&$2171506943X^5 + 2231412358X^4 + 2005208933X^3 + 580698082X^2 +$\\ &&$306153493X + 474327543$.\\
  \end{tabular}\end{flushleft}

The motivated reader can check the curves we have constructed with the modular polynomials: it is enough to verify that the curves have the same zeta functions.

\paragraph{}
The polynomials are accessible at the adress: \phantom{a}  \url{http://www.math.u-bordeaux1.fr/~emilio/}.


\section*{Acknowledgements}
 Experiments presented in this paper were carried out using the PLAFRIM experimental testbed, being developed under the Inria PlaFRIM development action with support from LABRI and IMB and other entities: Conseil Régional d'Aquitaine, FeDER, Université de Bordeaux and CNRS (see https://plafrim.bordeaux.inria.fr/).
The author thank his PhD supervisors Andreas Enge and Damien Robert not only for the fruitful discussions we had, but also for their support and encouragement during his studies.
The author also thank the anonymous reviewer for his careful reading and his comments and suggestions.
This research was partially funded by ERC Starting Grant ANTICS 278537.

\bibliography{biblio}

\begin{thebibliography}{10}

\bibitem{Pari}
K.~Belabas~et al.
\newblock Pari/gp.
\newblock \url{http://pari.math.u-bordeaux.fr/}, October 2012.
\newblock Bordeaux, 2.5.3 edition.

\bibitem{CompHilbClassPol}
J.~Belding, R.~Bröker, A.~Enge, and K.~Lauter.
\newblock Computing {H}ilbert class polynomials.
\newblock In {\em Algorithmic {N}umber {T}heory 8th {I}nternational {S}ymposium
  ({ANTS VIII})}, volume 5011 of {\em {LNCS}}, pages 312--326. Springer, 2008.

\bibitem{Birk}
C.~Birkenhake and H.~Lange.
\newblock {\em Complex abelian varieties}, volume 302 of {\em Grundlehren der
  Mathematischen Wissenschaften}.
\newblock Springer, 2003.

\bibitem{EndEllFF}
G.~Bisson and A.V. Sutherland.
\newblock Computing the endomorphism ring of an ordinary elliptic curve over a
  finite field.
\newblock {\em J. {N}umber {T}heory}, 113:815--831, 2011.

\bibitem{BL}
R.~Bröker and K.~Lauter.
\newblock Modular polynomials for genus 2.
\newblock {\em LMS Journal of Computation and Mathematics}, 12:326--339, 1
  2009.

\bibitem{ModPolVol}
R.~Bröker, K~Lauter, and A.V. Sutherland.
\newblock Modular polynomials via isogeny volcanoes.
\newblock {\em Mathematics of Computation}, 81:1201--1231, 2012.

\bibitem{fplll}
D.~Cadé, X.~Pujol, and D.~Stehlé.
\newblock Fplll.
\newblock \url{http://perso.ens-lyon.fr/damien.stehle/fplll/}, January 2013.
\newblock 4.0.2 edition.

\bibitem{Cosset}
R.~Cosset.
\newblock {\em Applications des fonctions thêta à la cryptographie sur
  courbes hyperelliptiques}.
\newblock PhD thesis, Université Henri Poincaré - Nancy~1, 2011.

\bibitem{QuadGauss}
P.~Davis and P.~Rabinowitz.
\newblock {\em Methods of Numerical Integration, 2nd ed.}
\newblock Academic Press, New York, 1984.

\bibitem{Dupont}
R.~Dupont.
\newblock {\em Moyenne arithm{\'e}tico-g{\'e}om{\'e}trique, suites de
  {B}orchardt et applications}.
\newblock PhD thesis, {\'E}cole polytechnique, 2006.
\newblock \url{http://www.lix.polytechnique.fr/Labo/Regis.Dupont/}.

\bibitem{CRTG2}
K.~Eisenträger and K.~Lauter.
\newblock A {CRT} algorithm for constructing genus 2 curves over finite fields.
\newblock In {\em Arithmetic, {G}eometry and {C}oding {T}heory ({AGCT-10})},
  volume~21 of {\em Séminaires et {C}ongrès}, pages 161--176. Société
  {M}athématique de {F}rance, 2009.

\bibitem{Elkies}
N.~Elkies.
\newblock Elliptic and modular curves over finite fields and related
  computational issues.
\newblock In {\em Computational perspectives on number theory: {P}roceedings of
  the conference in honor of {A.O.L.} {A}tkin}, volume~7 of {\em {AMS/IP
  S}tudies in {A}dvanced {M}athematics}, pages 21--76. {AMS}, 1998.

\bibitem{Enge}
A.~Enge.
\newblock Computing modular polynomials in quasi-linear time.
\newblock {\em Mathematics of Computation}, 78:1809--1824, 2009.
\newblock \url{http://hal.inria.fr/hal-00823745}.

\bibitem{parignump}
A.~Enge.
\newblock Pari-gnump.
\newblock \url{http://www.multiprecision.org/index.php?prog=pari-gnump/},
  February 2014.
\newblock 0.0.1 edition.

\bibitem{mpfrcx}
A.~Enge.
\newblock Mpfrcx.
\newblock \url{http://mpfrcx.multiprecision.org/}, July 2012.
\newblock INRIA, 0.4.1 edition.

\bibitem{mpc}
A.~Enge, M.~Gastineau, P.~Théveny, and P.~Zimmermann.
\newblock Gnu mpc - a library for multiprecision complex arithmetic with exact
  rounding.
\newblock \url{http://mpc.multiprecision.org/}, September 2012.
\newblock INRIA, 1.0.1 edition.

\bibitem{ClassInvCRT}
A.~Enge and A.V. Sutherland.
\newblock Class invariants by the {CRT} method.
\newblock In {\em Algorithmic {N}umber {T}heory 9th {I}nternational {S}ymposium
  ({ANTS IX})}, volume 6197 of {\em {LNCS}}, pages 142--156. Springer, 2010.

\bibitem{cmh}
A.~Enge and E.~Thomé.
\newblock Computing class polynomials for abelian surfaces.
\newblock {\em Experimental Mathematics}, 2014.

\bibitem{cmhlogiciel}
A.~Enge and E.~Thomé.
\newblock Cmh - computation of {I}gusa class polynomials.
\newblock \url{http://cmh.gforge.inria.fr/}, March 2014.
\newblock 1.0 edition.

\bibitem{Freitag}
E.~Freitag.
\newblock {\em Siegelsche Modulfunktionen}, volume 254 of {\em Grundlehren der
  Mathematischen Wissenschaften}.
\newblock Springer-Verlag, 1983.

\bibitem{Gaudry}
P.~Gaudry.
\newblock {\em Algorithmique des courbes hyperelliptiques et applications à la
  cryptologie}.
\newblock PhD thesis, {\'E}cole polytechnique, 2000.

\bibitem{Gott}
E.~Gottschling.
\newblock Explizite {B}estimmung der {R}andflächen des {F}undamentalbereiches
  der {M}odulgruppe zweiten {G}rades.
\newblock {\em Annals of mathematics}, 138:103--124, 1959.

\bibitem{gmp}
T~Granlund~et al.
\newblock Gmp - the {GNU} multiple precision arithmetic library.
\newblock \url{http://gmplib.org/}, February 2013.
\newblock 5.1.1 edition.

\bibitem{Gruenewald}
D.~Gruenewald.
\newblock {\em Explicit algorithms for Humbert surfaces}.
\newblock PhD thesis, University of Sydney, 2008.

\bibitem{mpfr}
G.~Hanrot, V.~Lefèvre, P.~Pélissier, and P.~Zimmermann~et al.
\newblock {GNU} mpfr - a library for multiple-precision floating-point
  computations with exact rounding.
\newblock \url{http://www.mpfr.org/}, July 2012.
\newblock 3.1.1 edition.

\bibitem{HS}
M.~Hindry and J.H. Silverman.
\newblock {\em Diophantine geometry}, volume 201 of {\em Graduate text in
  mathematics}.
\newblock Springer-Verlag, 2000.

\bibitem{Geer}
F.~Hirzebruch and G.~Van~der Geer.
\newblock {\em Lectures on Hilbert modular surfaces}, volume~77 of {\em Presses
  de l'université de Montréal}.
\newblock Séminaire de Mathématiques Supérieures, 1981.

\bibitem{Igu60}
J.I. Igusa.
\newblock Arithmetic variety of moduli for genus 2.
\newblock {\em Annals of Mathematics}, 72(3), 1960.

\bibitem{Igu62}
J.I. Igusa.
\newblock On {S}iegel modular forms of genus 2.
\newblock {\em Johns Hopkins University Press}, 84(1), 1962.

\bibitem{Igu72}
J.I. Igusa.
\newblock {\em Theta functions}, volume 194 of {\em Grundlehren der
  Mathematischen Wissenschaften}.
\newblock Springer, 1972.

\bibitem{Klingen}
H.~Klingen.
\newblock {\em Introductory lectures on Siegel modular forms}, volume~20 of
  {\em Cambridge Studies in Advanced Mathematics}.
\newblock Cambridge University Press, 1990.

\bibitem{LangEllfct}
S.~Lang.
\newblock {\em Elliptic functions}, volume 112 of {\em Graduate text in
  mathematics}.
\newblock Springer, 1987.

\bibitem{Manni}
R.~Manni.
\newblock Modular varieties with level $2$ theta structure.
\newblock {\em American Journal of Mathematics}, 116:1489--1511, 1994.

\bibitem{Mestre}
J.-F. Mestre.
\newblock Construction de courbes de genre 2 à partir de leurs modules.
\newblock In {\em Effective methods in algebraic geometry}, volume~94 of {\em
  Progress in Mathematics}, pages 313--334. Birkhäuser Boston, 1991.

\bibitem{Molin}
P.~Molin.
\newblock {\em Intégration numérique et calculs de fonctions L}.
\newblock PhD thesis, Université Bordeaux~1, 2010.
\newblock \url{https://github.com/pascalmolin/hcperiods}.

\bibitem{Mum70}
D.~Mumford.
\newblock {\em Abelian Varieties}.
\newblock Tata Institute of fundamental research studies in mathematics.
  Published for the Tata Institute of Fundamental Research, Bombay, Oxford
  University Press, 1970.

\bibitem{Mumford83}
D.~Mumford.
\newblock {\em Tata lectures on theta {I}}, volume~28 of {\em Progress in
  Mathematics}.
\newblock Birkhäuser, 1983.

\bibitem{Mumford84}
D.~Mumford.
\newblock {\em Tata lectures on theta {II}}, volume~43 of {\em Progress in
  Mathematics}.
\newblock Birkhäuser, 1984.

\bibitem{Runge}
B.~Runge.
\newblock Endomorphism rings of abelian surfaces and projective models of their
  moduli spaces.
\newblock {\em Tohoku mathematical journal}, 51(3):283--303, 1999.

\bibitem{Schertz}
R.~Schertz.
\newblock {\em Complex multiplication}, volume~15 of {\em New Mathematical
  Monographs}.
\newblock Cambridge University Press, 2010.

\bibitem{Schlafli}
L.~Schläfli.
\newblock Beweis der {H}ermiteschen {V}erwandlungstafeln für die elliptischen
  {M}odulfunktionen.
\newblock {\em Journal für die reine und angewandte {M}athematik},
  72:360--369, 1870.

\bibitem{CountPoint}
R.~Schoof.
\newblock Counting points on elliptic curves over finite fields.
\newblock {\em Journal de {T}héorie des {N}ombres de {B}ordeaux}, 7:219--264,
  1995.

\bibitem{SchnelleMult}
A.~Schönhage and V.~Strassen.
\newblock Schnelle {M}ultiplikation grosser {Z}ahlen.
\newblock {\em Computing}, 7:281--292, 1971.

\bibitem{Streng}
M.~Streng.
\newblock {\em Complex multiplication of abelian surfaces}.
\newblock PhD thesis, Universiteit Leiden, 2010.

\bibitem{HilbCRT}
A.V. Sutherland.
\newblock Computing {H}ilbert class polynomials with the {C}hinese remainder
  theorem.
\newblock {\em Mathematics of Computation}, 80:501--538, 2011.

\bibitem{Thomae}
J.~Thomae.
\newblock Beitrag zur {B}estimmung von $\theta(0,0,\ldots,0)$ durch die
  {K}lassenmoduln algebraischer {F}unktionen.
\newblock {\em Journal für die {R}eine und {A}ngewandte {M}athematik},
  70:201--222, 1870.

\bibitem{Holo}
J.~van~der Hoeven.
\newblock Fast evaluation of holonomic functions.
\newblock {\em Theoretical {C}omputer {S}cience}, 210(1):199--215, 1999.

\bibitem{MCA}
J.~von~zur Gathen and G.~Jürgen.
\newblock {\em Modern Computer Algebra}.
\newblock Cambridge University Press, New York, NY, USA, 1999.

\bibitem{Weber}
H.~Weber.
\newblock {\em Elliptische {F}unktionen und {A}lgebraische {Z}ahlen}, volume~3
  of {\em Lehrbuch der {A}lgebra, 2nd ed.}
\newblock Vieweg, Braunschweig, 1908.

\bibitem{Weng}
A.~Weng.
\newblock Constructing hyperelliptic curves of genus 2 suitable for
  cryptography.
\newblock {\em Mathematics of Computation}, 72(241):435--458, 2003.

\bibitem{Zagier}
Don Zagier.
\newblock Modular forms of one variable.
\newblock {\em Notes based on a course given in Utrecht}, 1991.
\newblock
  \url{http://people.mpim-bonn.mpg.de/zagier/files/tex/UtrechtLectures/UtBook.pdf}.

\end{thebibliography}
\nocite{*}


\end{document}